\documentclass{article}

\usepackage[final]{neurips_2019}

\usepackage[utf8]{inputenc} \usepackage[T1]{fontenc}    \usepackage{hyperref}       \usepackage{url}            \usepackage{booktabs}       \usepackage{amsfonts}       \usepackage{nicefrac}       \usepackage{microtype}

\usepackage{pdfpages}
\usepackage{microtype}
\usepackage{graphicx}
\usepackage{subfigure}

\usepackage{amssymb}
\usepackage{amsmath}
\usepackage{amsthm}
\usepackage{mathtools}
\usepackage{subfiles}

\usepackage{multirow}
\usepackage{makecell}
\usepackage{paralist}
\usepackage{wrapfig}

\usepackage{algorithm}
\usepackage{algpseudocode}
\newtheorem{definition}{Definition}
\newtheorem{lemma}{Lemma}

\newtheorem{theorem}{Theorem}

\newtheorem{fact*}{Fact}
\newtheorem{claim}{Claim}

\DeclarePairedDelimiter{\norm}{\lVert}{\rVert}

\DeclareMathOperator*{\argmax}{arg\,max}
\DeclareMathOperator*{\argmin}{arg\,min}

\usepackage{xcolor}
\usepackage{soul}
\setstcolor{blue}

\newcommand{\measure}[1]{\mathcal{P}\left(#1\right)}

\newcommand{\derivativeclosed}[1]{#1}\newcommand{\derivative}[1]{#1}
\newcommand{\lemref}[1]{Lemma \ref{#1}}
\newcommand{\theoref}[1]{Theorem \ref{#1}}

\renewcommand{\eqref}[1]{Equation \ref{#1}}
\usepackage{fancybox}

\title{
Poincar\'{e} Recurrence, Cycles and Spurious Equilibria in Gradient-Descent-Ascent for
Non-Convex Non-Concave Zero-Sum Games
}

\author{Lampros Flokas\thanks{Equal contribution} \\
  Department of Computer Science\\
  Columbia University\\
  New York, NY 10025 \\
  \texttt{lamflokas@cs.columbia.edu} \\
\And
  Emmanouil V. Vlatakis-Gkaragkounis\footnotemark[1] \\
  Department of Computer Science\\
  Columbia University\\
  New York, NY 10025 \\
  \texttt{emvlatakis@cs.columbia.edu} \\
  \AND
  Georgios Piliouras \\
  Engineering Systems and Design \\
  Singapore University of Technology and Design\\
  Singapore \\
  \texttt{georgios@sutd.edu.sg} \\
}

\begin{document}

\maketitle

\begin{abstract}
     We study a wide class of non-convex non-concave min-max games that generalizes over standard bilinear zero-sum games. In this class, players control the inputs of a smooth function whose output is being applied to a bilinear zero-sum game. This class of games is motivated by the  indirect nature of the competition in
     Generative Adversarial Networks, where players control the parameters of a neural network while the actual competition happens between the distributions that the generator and discriminator capture. We establish theoretically, that depending on the specific instance of the problem 
     gradient-descent-ascent dynamics  can exhibit a variety of behaviors antithetical to convergence to the game theoretically meaningful min-max solution. Specifically, 
     different forms of recurrent behavior (including periodicity and Poincar\'{e} recurrence) are possible as well as convergence to spurious (non-min-max) equilibria for a positive measure of initial conditions. At the technical level, our analysis combines tools from optimization theory, game theory and dynamical systems.    
\end{abstract}

    \section{Introduction}
Min-max optimization is a problem of interest in several communities including Optimization, Game Theory and Machine Learning. In its most general form, given an objective function $r: \mathbb{R}^{n}\times \mathbb{R}^{m} \to \mathbb{R}$ and we would like to solve the following problem
\begin{equation} \label{eq:min-max}
(\pmb{\theta}^*,\pmb{\phi}^*) = \argmin_{\pmb{\theta} \in \mathbb{R}^{n} }\argmax_{\pmb{\phi} \in \mathbb{R}^{m} } r(\pmb{\theta},\pmb{\phi}).
\end{equation}
This problem is much more complicated compared to classical minimization problems, as even understanding under which conditions such a solution is meaning-full is far from trivial \cite{daskalakis2018limit,mai2017rock,oliehoek2018beyond,jin2019minmax}. What is even more demanding is understanding what kind of algorithms/dynamics are able to solve this problem when a solution is well defined.
    
Recently this problem has attracted renewed interest motivated by the advent of Generative Adversarial Networks (GANs) and their numerous applications \cite{gan,radford2015unsupervised,isola2017image,gan,zhang2017stackgan,arjovsky2017wasserstein,ledig2017photo,salimans2016improved}. A classical GAN architecture mainly revolves around the competition between two players, the generator and the discriminator. On the one hand, the generator aims to train a neural network based generative model that can generate high fidelity samples from a target distribution. On the other hand, the discriminator's goal is to train a neural network classifier than can distinguish between the samples of the target distribution and artificially generated samples. While one could consider each of the tasks in isolation, it is the competitive interaction between the generator and the discriminator that has lead to the resounding success of GANs. It is the "criticism" from a powerful discriminator that pushes the generator to capture the target distribution more accurately and it is the access to high fidelity artificial samples from a good generator that gives rise to better discriminators. Machine Learning researchers and practitioners have tried to formalize this competition using the min-max optimization framework mentioned above with great success \cite{arora2017generalization,ma2018generalization,ge2018fictitious,yazici2018unusual}.
    
One of the main limitations of this framework however is that to this day efficiently training GANs can be a notoriously difficult task \cite{salimans2016improved,metz2016unrolled,mertikopoulos2018cycles,kodali2017convergence}. Addressing this limitation has been the object of interest for a long line work in the recent years \cite{mescheder2018training,metz2016unrolled,pfau2016connecting,radford2015unsupervised,tolstikhin2017adagan,berthelot2017began,gulrajani2017improved}. Despite the intensified study, very little is known about efficiently solving general min-max optimization problems. Even for the relatively simple case of bilinear games, the little results that are known have usually a negative flavour. For example, the continuous time analogue of standard game dynamics such as gradient-descent-ascent or multiplicative weights lead to cyclic or recurrent behavior \cite{piliouras2014optimization,mertikopoulos2018cycles} whereas when they are actually run in discrete-time\footnote{Interestingly, running alternating gradient-descent-ascent in discrete-time results once again in recurrent behavior \cite{bailey2019finite}.} they lead to divergence and chaos \cite{BaileyEC18,cheung2019vortices,bailey2019fast}.
While positive results for the case of bilinear games exist, like extra-gradient (optimistic) training (\cite{daskalakis2017training,mertikopoulos2018mirror,daskalakis2018last}) and other techniques \cite{balduzzi2018mechanics,gidel2019negative,gidel2019a,abernethy2019lastiterate},
these results fail to generalize to  complex non-convex non-concave settings \cite{oliehoek2018beyond, lin2018solving, sanjabi2018solving}. 
In fact, for the case of non-convex-concave optimization, game theoretic interpretations of equilibria might not even be meaningful \cite{mazumdar2018convergence,jin2019minmax,adolphs2018local}.
    
In order to shed some light to this intellectually challenging problem, we propose a quite general class of min-max optimization problems that includes bilinear games as well as a wide range of non-convex non-concave games. In this class of problems, each player submits its own decision vector just like in general min-max optimization problems. Then each decision vector is processed separately by a (potentially different) smooth function. Each player finally gets rewarded by plugging in the processed decision vectors to a simple bilinear game.     More concretely, there are functions $F: \mathbb{R}^{n} \to \mathbb{R}^N$ and $G: \mathbb{R}^{m} \to \mathbb{R}^{M}$ and a matrix  $U_{N  \times M}$ such that
\begin{equation}
    r(\pmb{\theta},\pmb{\phi}) = \pmb{F}(\pmb{\theta})^\top U \pmb{G}(\pmb{\phi}).
\end{equation}
We call the resulting class of problems Hidden Bilinear Games. 
    
The motivation behind the proposed class of gamess is actually the setting of training GANs itself. During the training process of GANs, the discriminator and the generator "submit" the parameters of their corresponding neural network architectures, denoted as  $\pmb{\theta}$ and $\pmb{\phi}$ in our problem formulation. However, deep networks introduce nonlinearities in mapping their parameters to their output space which we capture through the non-convex functions $F, G$.     
Thus, even though hidden bilinear games do not demonstrate the full complexity of modern GAN architectures and training, they manage to capture two of its most pervasive properties:  \emph{i) the indirect competition of the generator and the discriminator}  
    and \emph{ii) the non-convex non-concave nature of training GANs}. Both features are markedly missing from simple bilinear games.

{\bf Our results.} We provide, the first to our own knowledge, global analysis  of gradient-descent-ascent for a class of non-convex non-concave zero-sum games that by design includes both features of bilinear zero-sum games as well as of single-agent non-convex optimization. Our analysis focuses on the (smoother) continuous time dynamics (Section \ref{section:two-by-two},\ref{section:general}) but we also discuss the implications for discrete time (Section \ref{section:discrete}). The unified thread of our results is that gradient-descent-ascent can exhibit a variety of behaviors antithetical to convergence to the min-max solution. In fact, convergence to a set of parameters that implement the desired min-max solution (as e.g. GANs require), if it actually happens, is more of an accident due to fortuitous system initialization rather than an implication of the adversarial network architecture.

Informally, we prove that these dynamics exhibit conservation laws, akin to energy conservation in physics. Thus, in contrast to them making progress over time their natural tendencies is to "cycle" through their parameter space. If the hidden bilinear game $U$ is 2x2 (e.g. Matching Pennies) with an interior Nash equilibrium, then the behavior is typically periodic (Theorem \ref{theorem:orbit}). If it is a higher dimensional game (e.g. akin to Rock-Paper-Scissors) then even more complex behavior is possible. Specifically, the system is formally analogous to Poincar\'{e} recurrent systems (e.g. many body problem in physics) (Theorems \ref{theorem:differomorphism-poincare-recurrence}, \ref{theorem:sigmoid-poincare-recurrence-behavior}). Due to the non-convexity of the operators $F,G$, the system can actually sometimes get stuck at equilibria, however, these fixed points may be merely artifacts of the nonlinearities of $F,G$ instead of meaningful solutions to the underline minmax problem $U$. (Theorem \ref{theorem:spurious}). 

In Section \ref{section:discrete}, we show that moving from continuous to discrete time, only enhances the disequilibrium properties of the dynamics.  Specifically, instead of energy conservation now energy increases over time leading away from equilibrium (Theorem \ref{theorem:discrete-hamiltonian}), whilst spurious (non-minmax) equilibria are still an issue 
(Theorem \ref{theorem:spurious-discrete}). Despite these negative results, there are some positive news, as at least in some cases we can show that time-averaging over these non-equilibrium trajectories (or equivalently choosing a distribution of parameters instead of a single set of parameters) can recover the min-max equilibrium (Theorem \ref{theorem:average}).  Technically our results combine tools from dynamical systems (e.g. Poincar\'{e} recurrence theorem, Poincar\'{e}-Bendixson theorem, Liouville's theorem) along with tools from game theory and non-convex optimization. 
   
Understanding the intricacies of GAN training requires broadening our vocabulary and horizons in terms of what type of long term behaviors are possible and developing new techniques that can hopefully counter them.

The structure of the rest of the paper is as follows. In Section \ref{section:related} we will present key results from prior work on the problem of min-max optimization. In Section \ref{section:preliminaries} we will present the main mathematical tools for our analysis. Sections \ref{section:two-by-two} through \ref{section:spurious} will be devoted to studying interesting special cases of hidden bilinear games. Section \ref{section:conclusion} will be the conclusion of our work.      \begin{figure}[h!]
        \centering
        \includegraphics[width=0.7\textwidth]{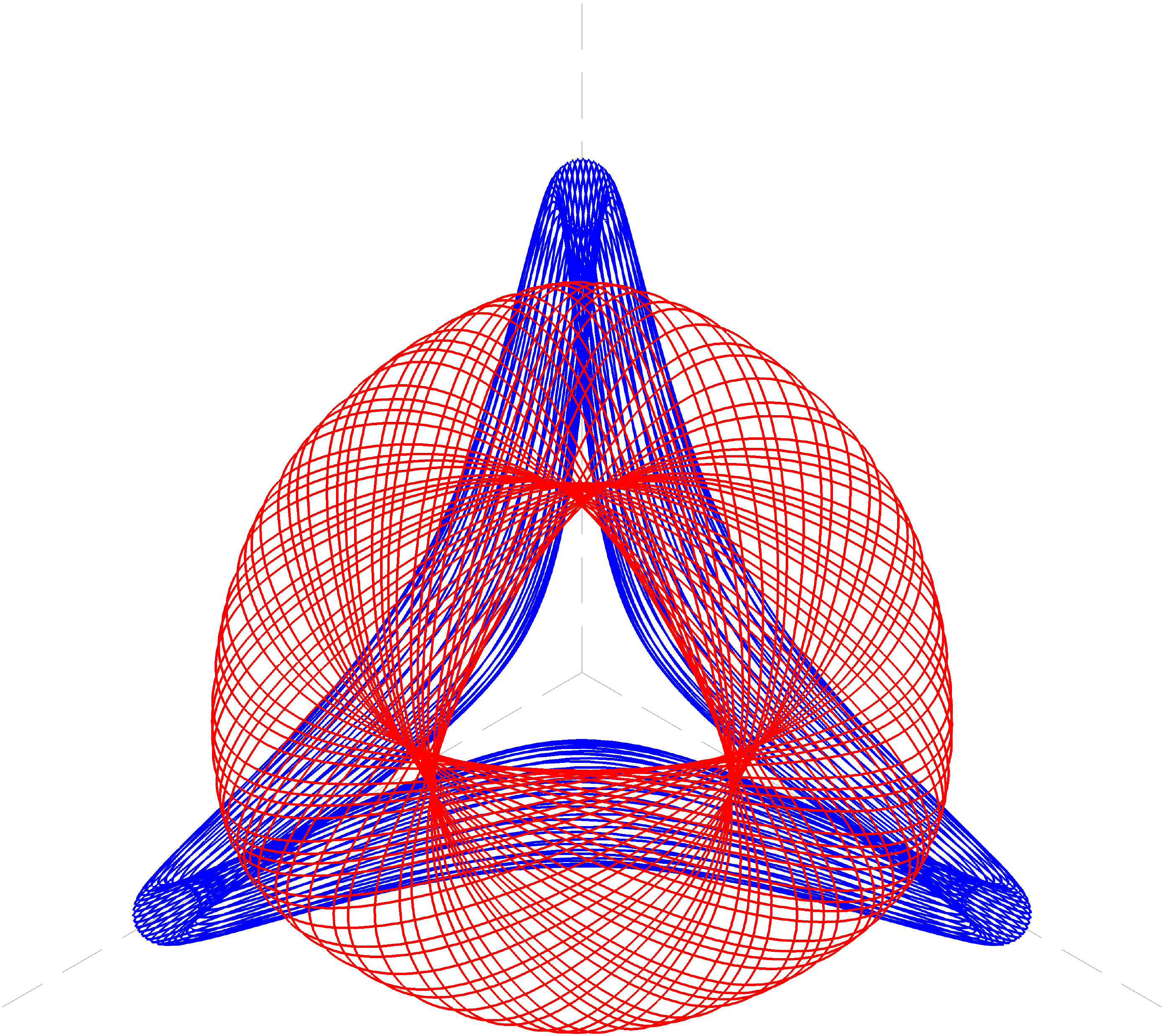}
        \caption{Trajectories of a single player using gradient-descent-ascent dynamics for a hidden Rock-Paper-Scissors game with sigmoid activations. The different colors correspond to different initializations of the dynamics. The trajectories exhibit Poincar\'{e} recurrence as expected by Theorem~ \ref{theorem:sigmoid-poincare-recurrence-behavior}.}
        \label{fig:my_label}
    \end{figure}

    \section{Related Work}\label{section:related}
\textit{Non-equilibrating dynamics in game theory.}
 \cite{paperics11} established non-convergence for a continuous-time variant of Multiplicative Weights Update (MWU), known as the replicator dynamic, for a 2x2x2 game and showed that as a result the system converges to states whose social welfare dominates that of all Nash equilibria.  
  \cite{palaiopanos2017multiplicative}
  proved the existence of Li-Yorke chaos in MWU dynamics of 2x2 potential games. 
 From the perspective of evolutionary game theory, which typically studies continuous time dynamics, numerous nonconvergence results are known but again typically for small games, e.g., \cite{sandholm10}. 
  \cite{piliouras2014optimization}
  shows that replicator dynamics exhibit a specific type of near periodic behavior in bilinear (network) zero-sum games, which is known as Poincar\'{e} recurrence.  
  Recently, \cite{mertikopoulos2018cycles}  generalized these results to more general continuous time variants of FTRL dynamics (e.g. gradient-descent-ascent).  
   Cycles arise also in evolutionary team competition \cite{DBLP:journals/corr/abs-1711-06879} as well as in network competition \cite{nagarajan2018three}.
   Technically, \cite{DBLP:journals/corr/abs-1711-06879} is the closest paper to our own as it  studies evolutionary competition between Boolean functions, however, the dynamics in the two models are different and that paper is strictly focused on periodic systems. 
  The papers in the category of cyclic/recurrent dynamics combine  delicate arguments  such as volume preservation and the existence of  constants of motions (``energy preservation"). In this paper we provide a wide generalization of these type of results by establishing cycles and recurrence type of behavior for a large class of non-convex non-concave games. 
   In the case of discrete time dynamics, such as standard gradient-descent-ascent, the system trajectories are first order approximations of the above motion  and these conservation arguments do not hold exactly. Instead, even in bilinear games, the ``energy" slowly increases over time \cite{BaileyEC18} implying chaotic divergence away from equilibrium \cite{cheung2019vortices}. We extend such energy increase results to non-linear settings.

\textit{Learning in zero-sum games and connections to GANs.}
Several recent papers have shown positive results about convergence to equilibria in (mostly bilinear) zero-sum games for suitable adapted variants of first-order methods and then apply these techniques to Generative Adversarial Networks (GANs) showing improved performance (e.g. \cite{daskalakis2017training,daskalakis2018last}).  \cite{balduzzi2018mechanics} made use of conservation laws of learning dynamics in zero-sum games (e.g. \cite{BaileyAAMAS19}) to develop new algorithms for training GANs that add a new component to the vector field that aims at minimizing this energy function. Different energy shrinking techniques for convergence in GANs (non-convex saddle point problems) exploit connections to variational inequalities and employ mirror descent techniques with an extra gradient step \cite{Gidel,mertikopoulos2018mirror}. Moreover, adding negative momentum can help with stability in zero-sum games \cite{Momentum}.  Game theoretic inspired methods such as time-averaging  work well in practice for a wide range of architectures \cite{yazici2018unusual}.  

     \section{Preliminaries}\label{section:preliminaries}
\subsection{Notation}
Vectors are denoted in boldface $\pmb{x}, \pmb{y}$ unless otherwise indicated are considered as column vectors. We use $\norm{\cdot}$ corresponds to denote the $\ell_2-$norm. For a function $f:\mathbb{R}^d\to \mathbb{R}$ we use $\nabla f$ to denote its gradient. For functions of two vector arguments, $f (\pmb{x},\pmb{y}) : \mathbb{R}^{d_1}\times \mathbb{R}^{d_2} \to \mathbb{R}$ , we use $\nabla_{\pmb{x}} f, \nabla_{\pmb{y}} f$ to denote its partial gradient. For the time derivative we will use the dot accent abbreviation, i.e.,  $\dot{\pmb{x}}=\frac{d}{dt}[\pmb{x}(t)]$. A function $f$ will belong to $C^r$ if it is $r$ times continuously differentiable. The term ``sigmoid" function refers to $\sigma:\mathbb{R}\to\mathbb{R}$ such that $\sigma(x)=(1+e^{-x})^{-1}$. Finally, we use $\measure{\cdot}$, operating over a set, to denote its (Lebesgue) measure.

\subsection{Definitions}
\begin{definition}[Hidden Bilinear Zero-Sum Game]
 In a hidden bilinear zero-sum game there are two players, each one equipped with a smooth function $\pmb{F}: \mathbb{R}^{n} \to \mathbb{R}^N$ and $\pmb{G}: \mathbb{R}^{m} \to \mathbb{R}^{M}$  and a payoff matrix $U_{N  \times M}$ such that each player inputs its own decision vector $\pmb{\theta} \in \mathbb{R}^{n}$ and $\pmb{\phi}\in \mathbb{R}^{m}$ and is trying to maximize or minimize $r(\pmb{\theta},\pmb{\phi})=\pmb{F}(\pmb{\theta})^\top U \pmb{G}(\pmb{\phi})$ respectively.
\end{definition}
In this work we will mostly study continuous time dynamics of solutions for the problem of Equation \ref{eq:min-max} for hidden bilinear zero-sum games but we will also make some important connections to discrete time dynamics that are also prevalent in practice. In order to make this distinction clear, let us  define the following terms.
\begin{definition}[Continuous Time Dynamical System]
A system of ordinary differential equations $\dot{\pmb{x}} = f (\pmb{x})$ where $f: \mathbb{R}^d \to \mathbb{R}^d$ will be called a continuous time dynamical system. Solutions of the equation $f(\pmb{x}) = 0$ are called the fixed points of the dynamical system. 
\end{definition}
We will call $f$ the \emph{vector field} of the dynamical system. In order to understand the properties of continuous time dynamical systems, we will often need to study their behaviour given different initial conditions. This behaviour is captured by the flow of the dynamical system. More precisely, 
\begin{definition}
If $f$ is Lipschitz-continuous, there exists a continuous map $\Phi(\pmb{x}_0,t)~:~\mathbb{R}^d~\times~\mathbb{R}\to~\mathbb{R}^d$ called \emph{flow} of the dynamical system such that  for all $\pmb{x}_0\in \mathbb{R}^d$
we have that $\Phi(\pmb{x}_0, t )$ is the unique solution  of the problem $\{ \dot{\pmb{x}} = f (\pmb{x}) , \pmb{x}(0) = \pmb{x}_0\}$.
We will refer to $\Phi(\pmb{x}_0, t )$ as a \emph{trajectory} or \emph{orbit} of the dynamical system.
\end{definition}

In this work we will be mainly study the gradient-descent-ascent dynamics for the problem of Equation \ref{eq:min-max}. The continuous (discrete) time version of the dynamics (with learning rate $\alpha$) are based on the following equations:
\[(\text{CGDA}):\begin{Bmatrix}
     \dot{\pmb{\theta}} &= - \nabla_{\pmb{\theta}} r(\pmb{\theta}, \pmb{\phi})\\
    \dot{\pmb{\phi}} &= \nabla_{\pmb{\phi}} r(\pmb{\theta}, \pmb{\phi})
\end{Bmatrix}\quad
\text{(DGDA}):\begin{Bmatrix}
     \pmb{\theta}_{k+1} &= \pmb{\theta}_k - \alpha\nabla_{\pmb{\theta}} r(\pmb{\theta}_k, \pmb{\phi}_k)\\
    \pmb{\phi}_{k+1} &= \pmb{\phi}_k + \alpha \nabla_{\pmb{\phi}} r(\pmb{\theta}_k, \pmb{\phi}_k)
\end{Bmatrix}
\]
A key notion in our analysis is that of (Poincaré) recurrence. Intuitively, a dynamical system is recurrent if, after a sufficiently long (but finite) time, almost every state returns arbitrarily close to the system’s initial state.
\begin{definition}\label{definition:reccurent}
A point $\mathbf{x} \in \mathbb{R}^d$ is said to be recurrent under the flow $\Phi$, if for every neighborhood $U~\subseteq~\mathbb{R}^d$ of $\mathbf{x}$, there exists an increasing sequence of times $t_n$ such that $\displaystyle\lim_{n\to\infty} t_n = \infty$ and $~\Phi(\mathbf{x},t_n)~\in~ U $ for all $n$. Moreover, the flow $\Phi$ is called Poincar\'{e} recurrent in non-zero measure set $A\subseteq \mathbb{R}^d$ if the set of the non-recurrent points in $A$ has zero measure.
\end{definition}

     \section{Cycles in hidden bilinear games with two strategies}\label{section:two-by-two}
In this section we will focus on a particular case of hidden biinear games where both the generator and the discriminator play only two strategies. Let $U$ be our zero-sum game
and without loss of generality we can assume that there are functions $f: \mathbb{R}^{n} \to [0,1]$ and $g: \mathbb{R}^{m} \to [0,1]$ such that
\begin{align*}
    \pmb{F}(\pmb{\theta}) = 
    \begin{pmatrix}
    f(\pmb{\theta}) \\
    1-f(\pmb{\theta}) 
    \end{pmatrix} 
&& 
    U = \begin{pmatrix}
            u_{0,0} && u_{0,1} \\
            u_{1,0} && u_{1,1} \\
        \end{pmatrix}
        &&
    \pmb{G}(\pmb{\phi}) = 
    \begin{pmatrix}
    g(\pmb{\phi}) \\
    1-g(\pmb{\phi}) 
    \end{pmatrix}
\end{align*} 
Let us assume that the hidden bi-linear game has a unique mixed Nash equilibrium $(p,q)$:
\begin{align*}
  v= u_{0,0} - u_{0,1} - u_{1,0} + u_{1,1}  \neq 0, \quad  q=  -\frac{u_{0,1} - u_{1,1}}{v}  \in (0,1), \quad p= -\frac{u_{1,0} - u_{1,1}}{v}   \in (0,1)
\end{align*}
Then we can write down the equations of gradient-descent-ascent : $\begin{Bmatrix}
\begin{aligned} 
    \dot{\pmb{\theta}} &= - v \nabla f(\pmb{\theta}) (g(\pmb{\phi})- q)\\
    \dot{\pmb{\phi}} &=  v \nabla g(\pmb{\phi}) (f(\pmb{\theta})- p)
\end{aligned}
\end{Bmatrix} ~\refstepcounter{equation}(\theequation)  \label{eq:eq_gda}$

In order to analyze the behavior of this system, we would like to understand the topology of the trajectories of $\pmb{\theta}$ and $\pmb{\phi}$, at least individually. The following lemma makes a connection between the trajectories of each variable in the min-max optimization system of Equation \ref{eq:eq_gda} and simple gradient ascent dynamics.
\begin{lemma}\label{lemma:reparametrization}
Let $k: \mathbb{R}^d \to \mathbb{R}$ be a $C^2$ function. Let $h: \mathbb{R} \to \mathbb{R}$ be a $C^1$ function and $\pmb{x}(t) = \rho(t)$ be the unique solution of the dynamical system $\Sigma_1$. Then for the dynamical system $\Sigma_2$ the unique solution is $\pmb{z}(t) = \rho ( \int_{0}^t h(s) \mathrm{d}s )$
\begin{equation*}
\begin{Bmatrix}
    \dot{\pmb{x}} &=& \nabla k(\pmb{x}) \\
    \pmb{x}(0) &=& \pmb{x}_0
\end{Bmatrix}: \Sigma_1\quad
\begin{Bmatrix}
    \dot{\pmb{z}} &=& h(t) \nabla k(\pmb{z}) \\
    \pmb{z}(0) &=& \pmb{x}_0
\end{Bmatrix} : \Sigma_2
\end{equation*}
\end{lemma}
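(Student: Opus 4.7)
The plan is to verify the lemma by a direct time-reparametrization argument using the chain rule, and then invoke a standard existence-uniqueness result to conclude that the constructed curve is indeed the unique solution. The only subtlety is making sure the regularity hypotheses ($k \in C^2$, $h \in C^1$) are used correctly to justify both the differentiation and the uniqueness.

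First I would introduce the time change $\tau(t) = \int_0^t h(s)\,\mathrm{d}s$, which is well-defined and $C^1$ because $h \in C^1$, with $\tau'(t) = h(t)$ by the fundamental theorem of calculus. Setting $\pmb{z}(t) := \rho(\tau(t))$, the chain rule gives
\[
\dot{\pmb{z}}(t) \;=\; \dot\rho(\tau(t))\,\tau'(t) \;=\; \nabla k(\rho(\tau(t)))\,h(t) \;=\; h(t)\,\nabla k(\pmb{z}(t)),
\]
where the middle equality uses the defining property of $\rho$ as the solution to $\Sigma_1$. The initial condition is immediate: $\pmb{z}(0) = \rho(\tau(0)) = \rho(0) = \pmb{x}_0$. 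Hence $\pmb{z}$ satisfies $\Sigma_2$.

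For uniqueness, I would appeal to the Picard--Lindel\"of theorem applied to the non-autonomous vector field $G(t,\pmb{z}) = h(t)\,\nabla k(\pmb{z})$. Because $k \in C^2$, the map $\nabla k$ is $C^1$ and hence locally Lipschitz in $\pmb{z}$; since $h$ is continuous in $t$, $G$ is continuous in $(t,\pmb{z})$ and locally Lipschitz in $\pmb{z}$ uniformly on compact time intervals. This guarantees that any initial value problem for $\Sigma_2$ has at most one solution, so the curve constructed above is the unique solution.

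There is no real obstacle here beyond bookkeeping: the only thing to watch is that $\tau$ need not be monotone (since $h$ can change sign), so $\tau$ is not a diffeomorphism in general. However, the argument above does not require monotonicity; the chain-rule computation and the uniqueness statement are both valid regardless of the sign of $h$. The lemma can thus be stated and proved for general $C^1$ time factors $h$, which is exactly the form in which it will later be applied to the gradient-descent-ascent trajectories of Equation~\ref{eq:eq_gda}, where the roles of $h(t)$ are played by the scalar coupling factors $-v(g(\pmb{\phi}(t)) - q)$ and $v(f(\pmb{\theta}(t)) - p)$ respectively.
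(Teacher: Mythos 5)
Your proposal is correct and takes essentially the same route as the paper's proof: a direct chain-rule verification that $\pmb{z}(t)=\rho\bigl(\int_0^t h(s)\,\mathrm{d}s\bigr)$ satisfies the initial condition and the differential equation of $\Sigma_2$. Your explicit Picard--Lindel\"of argument for uniqueness (using $k\in C^2$ to get local Lipschitzness of $\nabla k$) is a careful addition that the paper leaves implicit, but it does not change the substance of the argument.
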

By applying the previous result for $\pmb{\theta}$ with $k=f$ and $h(t) = - v (g(\pmb{\phi}(t))- q)$, we get that even under the dynamics of Equation \ref{eq:eq_gda}, $\pmb{\theta}$ remains on a trajectory of the simple gradient ascent dynamics with initial condition $\pmb{\theta}(0)$. This necessarily affects the possible values of $f$ and $g$ given the initial conditions. Let us define the sets of values attainable for each initialization.

\begin{definition}\label{definition:range}
For each  $\pmb{\theta}(0)$, $f_{\pmb{\theta}(0)}$ is the set of possible values of $f(\pmb{\theta}(t))$ can attain under gradient ascent dynamics. Similarly, we define $g_{\pmb{\phi}(0)}$ the corresponding set for $g$.
\end{definition}

What is special about the trajectories of gradient ascent is that along this curve $f$ is strictly increasing (For a detailed explanation, reader could check the proof of Theorem \ref{theorem:reparametrization} in the Appendix) and therefore each point $\pmb{\theta}(t)$ in the trajectory has a unique value for $f$. Therefore even in the system of Equation \ref{eq:eq_gda}, $f(\pmb{\theta}(t))$ uniquely identifies $\pmb{\theta}(t)$. This can be formalized in the next theorem.
\begin{theorem} \label{theorem:reparametrization}
For each $\pmb{\theta}(0),\pmb{\phi}(0)$, under the dynamics of Equation \ref{eq:eq_gda}, there are $C^1$ functions $(X_{\pmb{\theta}(0)},X_{\pmb{\phi}(0)})$ such that $X_{\pmb{\theta}(0)}:f_{\pmb{\theta}(0)}\to \mathbb{R}^{n}$
,$X_{\pmb{\phi}(0)}:g_{\pmb{\phi}(0)}\to\mathbb{R}^{n}$
and $\pmb{\theta}(t) = X_{\pmb{\theta}(0)} (f(t))$,
$\pmb{\phi}(t)~=~X_{\pmb{\phi}(0)} (g(t)).$
\end{theorem}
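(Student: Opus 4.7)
The plan is to leverage Lemma~\ref{lemma:reparametrization} to rewrite each coordinate of the GDA trajectory as a time-reparametrization of a pure gradient-ascent curve, and then invert the (strictly monotone) composition of $f$ along that curve to express position as a function of $f$-value. By symmetry, the same recipe handles $\pmb{\phi}$.

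More concretely, first I would identify the $\pmb{\theta}$-equation in \eqref{eq:eq_gda} with the $\Sigma_2$ template of Lemma~\ref{lemma:reparametrization}, taking $k=f$ and $h(t)=-v\,(g(\pmb{\phi}(t))-q)$, which is continuous in $t$ because $\pmb{\phi}(\cdot)$ is $C^1$. Let $\rho_{\pmb{\theta}(0)}(s)$ denote the unique solution of $\dot{\pmb{x}}=\nabla f(\pmb{x})$ starting at $\pmb{\theta}(0)$. Lemma~\ref{lemma:reparametrization} then yields
\[
\pmb{\theta}(t)=\rho_{\pmb{\theta}(0)}\!\left(\tau(t)\right),\qquad \tau(t):=\int_0^t h(s)\,ds.
\]
This is the ``bridge'' from the coupled system to an autonomous gradient ascent.

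Next I would study the scalar map $s\mapsto \varphi(s):=f(\rho_{\pmb{\theta}(0)}(s))$. Differentiating along the gradient-ascent trajectory gives $\varphi'(s)=\nabla f(\rho_{\pmb{\theta}(0)}(s))^\top \nabla f(\rho_{\pmb{\theta}(0)}(s)) = \lVert \nabla f(\rho_{\pmb{\theta}(0)}(s))\rVert^2 \ge 0$, with strict inequality whenever the gradient does not vanish. If $\pmb{\theta}(0)$ is a critical point, then $\rho_{\pmb{\theta}(0)}$ is constant, $f_{\pmb{\theta}(0)}$ reduces to the singleton $\{f(\pmb{\theta}(0))\}$, and one sets $X_{\pmb{\theta}(0)}$ to the constant map; this edge case is trivial. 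Otherwise, uniqueness of solutions of $\dot{\pmb{x}}=\nabla f(\pmb{x})$ prevents the trajectory from ever reaching a critical point in finite time, so $\varphi'(s)>0$ for all $s$ in the maximal domain, making $\varphi$ a $C^1$ strictly increasing bijection onto $f_{\pmb{\theta}(0)}$.

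With that in hand, I invoke the inverse function theorem to obtain a $C^1$ inverse $\psi_{\pmb{\theta}(0)}:=\varphi^{-1}:f_{\pmb{\theta}(0)}\to\mathbb{R}$, and define
\[
X_{\pmb{\theta}(0)}(\xi)\;:=\;\rho_{\pmb{\theta}(0)}\!\left(\psi_{\pmb{\theta}(0)}(\xi)\right),
\]
which is $C^1$ as a composition of $C^1$ maps. To verify the claim, note that $f(\pmb{\theta}(t))=\varphi(\tau(t))$, hence $\tau(t)=\psi_{\pmb{\theta}(0)}(f(\pmb{\theta}(t)))$, and therefore $\pmb{\theta}(t)=\rho_{\pmb{\theta}(0)}(\tau(t))=X_{\pmb{\theta}(0)}(f(\pmb{\theta}(t)))$. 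A verbatim repetition with $g,\pmb{\phi}$ in place of $f,\pmb{\theta}$ (using $h(t)=v(f(\pmb{\theta}(t))-p)$) produces $X_{\pmb{\phi}(0)}$.

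The main technical delicacy I anticipate is justifying the regularity of $\psi_{\pmb{\theta}(0)}$ when $\nabla f$ may vanish somewhere on the closure of the orbit: the inverse function theorem requires $\varphi'>0$ on the open domain, which is guaranteed by the uniqueness argument above only provided one rules out critical points along the gradient ascent trajectory. Once that observation is in place (and the critical-point initial condition is handled separately), the rest of the argument is a clean application of Lemma~\ref{lemma:reparametrization} and the chain rule.
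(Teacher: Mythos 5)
Your proposal is correct and follows essentially the same route as the paper's proof: apply Lemma~\ref{lemma:reparametrization} with $k=f$ and $h(t)=-v\,(g(\pmb{\phi}(t))-q)$ to write $\pmb{\theta}(t)$ as a time-reparametrized gradient-ascent trajectory, invert the strictly increasing map $s\mapsto f(\rho_{\pmb{\theta}(0)}(s))$ (the paper's $A_{\pmb{\theta}(0)}$), and define $X_{\pmb{\theta}(0)}$ as the composition of the gradient-ascent flow with that inverse, handling the stationary initial condition separately. Your explicit backward-uniqueness argument that the orbit never reaches a critical point in finite time, which justifies $\varphi'>0$ and hence the $C^1$ inverse, is a welcome detail the paper leaves implicit, but it is a refinement of the same argument rather than a different approach.
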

Equipped with these results, we are able to reduce this complicated dynamical system of $\pmb{\theta}$ and $\pmb{\phi}$ to a planar dynamical system involving $f$ and $g$ alone.
\begin{lemma}\label{lemma:functional-dynamics}
If $\pmb{\theta}(t)$ and $\pmb{\phi}(t)$ are solutions to Equation \ref{eq:eq_gda} with initial conditions $(\pmb{\theta}(0), \pmb{\phi}(0))$, then we have that $f(t) = f(\pmb{\theta}(t))$ and $g(t) = g(\pmb{\phi}(t))$ satisfy the following equations
\begin{equation} \label{eq:planar}
 \begin{aligned} 
    &\dot{f} = -v \norm{\nabla f(X_{\pmb{\theta}(0)} (f))}^2 (g- q)\\
    &\dot{g} = v \norm{\nabla g(X_{\pmb{\phi}(0)} (g))}^2 (f- p)
\end{aligned}  
\end{equation}
\end{lemma}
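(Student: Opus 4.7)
The plan is to apply the chain rule to $f(\pmb{\theta}(t))$ and $g(\pmb{\phi}(t))$, substitute the vector field from Equation \ref{eq:eq_gda}, and then use Theorem \ref{theorem:reparametrization} to express everything in terms of $f$ and $g$ alone.

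First, I would differentiate $f(t) = f(\pmb{\theta}(t))$ with respect to time using the chain rule, obtaining $\dot{f}(t) = \nabla f(\pmb{\theta}(t))^\top \dot{\pmb{\theta}}(t)$. Substituting the first equation of \ref{eq:eq_gda}, namely $\dot{\pmb{\theta}} = -v\nabla f(\pmb{\theta})(g(\pmb{\phi}) - q)$, the two gradient-of-$f$ factors collapse into an inner product, giving
\[
\dot{f}(t) = -v\,\|\nabla f(\pmb{\theta}(t))\|^2\,(g(\pmb{\phi}(t)) - q).
\]
An identical calculation for $g(t) = g(\pmb{\phi}(t))$, using $\dot{\pmb{\phi}} = v\nabla g(\pmb{\phi})(f(\pmb{\theta}) - p)$, yields $\dot{g}(t) = v\,\|\nabla g(\pmb{\phi}(t))\|^2\,(f(\pmb{\theta}(t)) - p)$.

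Next, I would invoke Theorem \ref{theorem:reparametrization}, which provides the $C^1$ reparametrizations $\pmb{\theta}(t) = X_{\pmb{\theta}(0)}(f(t))$ and $\pmb{\phi}(t) = X_{\pmb{\phi}(0)}(g(t))$. Substituting these into the expressions above replaces $\nabla f(\pmb{\theta}(t))$ by $\nabla f(X_{\pmb{\theta}(0)}(f))$, and similarly for the $g$-term, yielding exactly the claimed planar system \ref{eq:planar}.

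There is essentially no obstacle here beyond verifying that the substitutions are legitimate: one needs $f(t)\in f_{\pmb{\theta}(0)}$ and $g(t)\in g_{\pmb{\phi}(0)}$ for all $t$ in the domain of the solution, so that $X_{\pmb{\theta}(0)}(f(t))$ and $X_{\pmb{\phi}(0)}(g(t))$ are defined. But this is built into Definition \ref{definition:range} together with Theorem \ref{theorem:reparametrization}: the latter asserts that $\pmb{\theta}(t)$ stays on the gradient-ascent trajectory of $f$ emanating from $\pmb{\theta}(0)$ (and analogously for $\pmb{\phi}$), so $f(\pmb{\theta}(t))$ lies in $f_{\pmb{\theta}(0)}$ by construction. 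Thus the reduction to the planar system is simply a direct consequence of the chain rule combined with the reparametrization result, and the lemma follows.
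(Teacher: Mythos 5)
Your proposal is correct and matches the paper's own proof essentially step for step: chain rule, substitution of the CGDA vector field to collapse the gradient factors into $\norm{\nabla f}^2$ and $\norm{\nabla g}^2$, then invocation of Theorem \ref{theorem:reparametrization} to rewrite the arguments as $X_{\pmb{\theta}(0)}(f)$ and $X_{\pmb{\phi}(0)}(g)$. Your closing remark that $f(t)\in f_{\pmb{\theta}(0)}$ and $g(t)\in g_{\pmb{\phi}(0)}$ by construction is a sound (and slightly more careful) justification of the substitution step, which the paper leaves implicit.
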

As one can observe both form Equation \ref{eq:eq_gda} and Equation \ref{eq:planar}, fixed points of the gradient-descent-ascent dynamics correspond to either solutions of $f(\pmb{\theta}) = p$ and $g(\pmb{\phi})=q$ or stationary points of $f$ and $g$ or even some combinations of the aforementioned conditions. Although, all of them are fixed points of the dynamical system, only the former equilibria are game theoretically meaningful. We will therefore define a subset of initial conditions for Equation \ref{eq:eq_gda} such that convergence to game theoretically meaningful fixed points may actually be feasible:
\begin{definition}\label{definition:safe-conditions}
We will call the initialization $(\pmb{\theta}(0),\pmb{\phi}(0))$ safe for Equation \ref{eq:eq_gda} if $\pmb{\theta}(0)$ and $\pmb{\phi}(0)$ are not stationary points of $f$ and $g$ respectively and $p \in f_{\pmb{\theta}(0)}$ and  $q \in g_{\pmb{\phi}(0)}$.
\end{definition}
For safe initial conditions we can show that gradient-descent-ascent dynamics applied in the class of the hidden bilinear zero-sum game mimic properties and behaviors of conservative/Hamiltonian physical systems \cite{BaileyAAMAS19}, like an ideal pendulum or an ideal spring-mass system. In such  systems, there is a notion of energy that remains constant over time and hence the system trajectories lie on level sets of these functions. To motivate further this intuition, it is easy to check that for the simplified case where $\norm{\nabla{f}}=\norm{\nabla{g}}=1$ the level sets correspond to cycles centered at the Nash equilibrium and the system as a whole captures gradient-descent-ascent for a bilinear $2\times2$ zero-sum game (e.g. Matching Pennies).
\begin{theorem} \label{theorem:invariant}
Let $\pmb{\theta}(0)$ and $\pmb{\phi}(0)$ be safe initial conditions. Then for the system of Equation \ref{eq:eq_gda}, the following quantity is time-invariant
\begin{equation*}
    H(f,g) = \int_p^f \frac{z-p}{\norm{\nabla f(X_{\pmb{\theta}(0)} (z))}^2} \mathrm{d}z + \int_q^g \frac{z-q}{\norm{\nabla g(X_{\pmb{\phi}(0)} (z))}^2} \mathrm{d}z
\end{equation*}
\end{theorem}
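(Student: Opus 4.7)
The plan is to verify directly that $\frac{d}{dt}H(f(t),g(t)) = 0$ along any trajectory of Equation \ref{eq:eq_gda} starting from safe initial conditions. Before differentiating, I would first confirm that $H$ is even well-defined along the trajectory. By \theoref{theorem:reparametrization}, the reparametrizations $X_{\pmb{\theta}(0)}:f_{\pmb{\theta}(0)}\to\mathbb{R}^{n}$ and $X_{\pmb{\phi}(0)}:g_{\pmb{\phi}(0)}\to\mathbb{R}^{m}$ are $C^1$, and safeness ensures $p\in f_{\pmb{\theta}(0)}$, $q\in g_{\pmb{\phi}(0)}$, so the integration limits lie in the appropriate domains. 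The discussion preceding that theorem further tells us that $f$ is strictly increasing along the pure gradient-ascent curve parametrizing $\pmb{\theta}$, so no point along it can be stationary for $f$; hence $\norm{\nabla f(X_{\pmb{\theta}(0)}(z))}^2 > 0$ for every $z\in f_{\pmb{\theta}(0)}$. By symmetry the same holds for $g$, so the integrands defining $H$ are continuous with non-vanishing denominators.

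Once well-definedness is established, the computation is a short application of the chain rule and the fundamental theorem of calculus:
\begin{equation*}
\frac{d}{dt}H(f,g)=\frac{f-p}{\norm{\nabla f(X_{\pmb{\theta}(0)}(f))}^2}\,\dot f + \frac{g-q}{\norm{\nabla g(X_{\pmb{\phi}(0)}(g))}^2}\,\dot g.
\end{equation*}
Plugging in the planar dynamics $\dot f = -v\,\norm{\nabla f(X_{\pmb{\theta}(0)}(f))}^2(g-q)$ and $\dot g = v\,\norm{\nabla g(X_{\pmb{\phi}(0)}(g))}^2(f-p)$ from \lemref{lemma:functional-dynamics}, the squared-norm factors cancel cleanly, leaving
\begin{equation*}
\frac{d}{dt}H(f,g) = -v(f-p)(g-q) + v(g-q)(f-p) = 0.
\end{equation*}

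The only genuine subtlety—and the reason the statement requires safe initial conditions—is preventing the denominators from vanishing. The critical observation is that because the $\pmb{\theta}$-orbit under Equation \ref{eq:eq_gda} is a reparametrization in time of a true gradient-ascent curve for $f$ (obtained by applying \lemref{lemma:reparametrization} with $k=f$ and $h(t)=-v(g(\pmb{\phi}(t))-q)$), any stationary point of $f$ along this curve would contradict the strict monotonicity of $f$ on a gradient-ascent trajectory started at a non-stationary point. Analogous reasoning handles $g$. Beyond this, the proof is just substitution and a two-term cancellation; no deeper dynamical-systems machinery is required at this stage.
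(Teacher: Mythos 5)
Your proposal is correct and follows essentially the same route as the paper's proof: after noting that safety makes $H$ well-defined along the trajectory, you differentiate $H$ via the chain rule, substitute the planar dynamics of \lemref{lemma:functional-dynamics}, and observe the two-term cancellation. Your added justification that the denominators never vanish (via the strict monotonicity of $f$ along the underlying gradient-ascent curve from \lemref{lemma:reparametrization}) is a welcome elaboration of a point the paper states only briefly.
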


The existence of this invariant immediately guarantees that Nash Equilibrium $(p,q)$ cannot be reached if the dynamical system is not initialized there.
Taking advantage of the planarity of the induced system - a necessary condition of Poincar\'{e}-Bendixson Theorem -  we can prove that:

\begin{theorem} \label{theorem:orbit}
Let $\pmb{\theta}(0)$ and $\pmb{\phi}(0)$ be safe initial conditions. Then for the system of Equation \ref{eq:eq_gda}, the orbit $(\pmb{\theta}(t), \pmb{\phi}(t))$ is periodic.
\end{theorem}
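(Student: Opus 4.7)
My plan is to exploit the planar reduction and the conserved quantity $H$ established in the preceding results, then close with the Poincar\'{e}--Bendixson theorem. By Lemma \ref{lemma:functional-dynamics}, the pair $(f(t),g(t))$ evolves under the planar autonomous system \eqref{eq:planar} on the open rectangle $R := f_{\pmb{\theta}(0)} \times g_{\pmb{\phi}(0)} \subseteq [0,1]^2$. Safety of the initialization ensures that the reparametrizations $X_{\pmb{\theta}(0)}(z)$ and $X_{\pmb{\phi}(0)}(z)$ never pass through a stationary point of $f$ or $g$, so $\norm{\nabla f(X_{\pmb{\theta}(0)}(f))}^2$ and $\norm{\nabla g(X_{\pmb{\phi}(0)}(g))}^2$ are strictly positive on $R$. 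From this it follows that the only equilibrium of the planar system inside $R$ is $(p,q)$.

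Next I would invoke Theorem \ref{theorem:invariant}: $H$ is constant along the planar trajectory, with $H(p,q)=0$. Because the integrand $(z-p)/\norm{\nabla f(X_{\pmb{\theta}(0)}(z))}^2$ has the sign of $z-p$, the first integral in $H$ is strictly positive whenever $f \neq p$, and analogously for the $g$-integral. Hence $H$ achieves a strict global minimum of $0$ at $(p,q)$ on $R$, and the constant value $c := H(f(0),g(0))$ along the trajectory is strictly positive. I would then argue that the level set $L_c := \{H=c\}\cap R$ is compact: as $(f,g)$ approaches $\partial R$, the underlying gradient-ascent flows approach stationary points of $f$ or $g$, forcing $\norm{\nabla f}^2 \to 0$ or $\norm{\nabla g}^2 \to 0$, and thus the integrands blow up and $H\to\infty$. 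Consequently $\{H\le c\}$ is bounded away from $\partial R$, making $L_c$ compact and forward-invariant, and it contains no equilibria since $(p,q)\notin L_c$.

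With a compact, forward-invariant, equilibrium-free set in a planar autonomous system, the Poincar\'{e}--Bendixson theorem immediately yields that the $\omega$-limit set of $(f(t),g(t))$ is a periodic orbit. Since $H$ is conserved and its nonzero level sets near $(p,q)$ are one-dimensional smooth Jordan curves, the trajectory itself must coincide with this limit cycle, so $(f(t),g(t))$ is periodic of some period $T>0$. Lifting back via Theorem \ref{theorem:reparametrization}, the identities $\pmb{\theta}(t)=X_{\pmb{\theta}(0)}(f(t))$ and $\pmb{\phi}(t)=X_{\pmb{\phi}(0)}(g(t))$ transfer periodicity upward: $\pmb{\theta}(t+T)=\pmb{\theta}(t)$ and $\pmb{\phi}(t+T)=\pmb{\phi}(t)$.

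The main obstacle I expect is the compactness step. One must rigorously verify that $H$ blows up at $\partial R$ by analyzing the asymptotics of $\norm{\nabla f}^2$ along the gradient-ascent reparametrization near its limit points, and rule out trajectories ``escaping'' toward a boundary where the gradients vanish. Once this geometric picture is pinned down, the remainder is a standard application of Poincar\'{e}--Bendixson together with the lifting lemma.
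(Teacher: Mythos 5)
Your proposal follows the same overall architecture as the paper's proof: reduce to the planar system for $(f,g)$ via Lemma \ref{lemma:functional-dynamics}, use the invariance of $H$ from Theorem \ref{theorem:invariant} to trap the trajectory in a compact region free of equilibria, apply Poincar\'{e}--Bendixson to get a periodic $\omega$-limit set, use the fact that $\nabla H$ vanishes only at $(p,q)$ so that the level set $\{H=c\}$ is a one-dimensional manifold (regular value theorem) to rule out a transient spiral onto the limit cycle, and lift periodicity back to $(\pmb{\theta}(t),\pmb{\phi}(t))$ via Theorem \ref{theorem:reparametrization}. The one place you genuinely deviate is the trapping step, and that is also where your argument, as written, has a gap: you infer $H \to \infty$ on $\partial R$ from the fact that the integrands $(z-p)/\norm{\nabla f(X_{\pmb{\theta}(0)}(z))}^2$ blow up pointwise at the endpoints of $f_{\pmb{\theta}(0)}$. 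Pointwise blow-up of an integrand does not imply divergence of the improper integral (a rate like $\norm{\nabla f}^2 \sim (p^*-z)^{1/2}$ would give a convergent integral), so the claim needs an actual rate argument. It is in fact true, and the clean way to see it is the time-reparametrization identity implicit in Theorem \ref{theorem:reparametrization}: along gradient ascent $\dot{f}=\norm{\nabla f}^2$, so $\int_{f(0)}^{f(t)} \norm{\nabla f(X_{\pmb{\theta}(0)}(z))}^{-2}\,\mathrm{d}z = t$, and since by Lemma \ref{lemma:interval} the interval $f_{\pmb{\theta}(0)}$ is open --- its endpoints are approached only as $t \to \pm\infty$ --- the integral of $\norm{\nabla f}^{-2}$ diverges at each endpoint; safety keeps the weight $z-p$ bounded away from zero near the endpoints, so the corresponding term of $H$ diverges too. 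With that repaired, your sublevel-set compactness argument is valid and arguably tidier than what the paper does.

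The paper avoids quantifying boundary behavior altogether: it enumerates the possible fixed points of the planar system and excludes each on the constant-$H$ trajectory --- the Nash point because $H(p,q)=0<c$; the mixed cases ($f=p$ with $\nabla f=\mathbf{0}$, or $g=q$ with $\nabla g=\mathbf{0}$) because safety plus the openness of $f_{\pmb{\theta}(0)}$, $g_{\pmb{\phi}(0)}$ makes them impossible; and the corner equilibria ($\nabla f=\nabla g=\mathbf{0}$) because they are isolated strict local maxima of $H$, unreachable along a level set. So the paper's trapping region comes from a case analysis rather than a coercivity estimate; your route buys a single uniform statement ($\{H\le c\}$ compact in $R$) at the price of the asymptotic analysis you correctly flagged as the main obstacle. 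Two small further remarks: (i) handle separately the trivial initialization at a fixed point, where $c=0$ and your strict positivity claim fails (the paper does this in its first line); (ii) your final step should be phrased as the paper's contradiction --- the union of the trajectory with its $\alpha,\omega$-limit sets lies in a compact one-manifold, and a trajectory merely converging to the cycle would violate that --- or equivalently as a nonvanishing flow confined to a circle component; ``level sets near $(p,q)$ are Jordan curves'' alone does not finish it, since the relevant level set is the one at height $c$, not a nearby one.
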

On a positive note, we can prove that the time averages of $f$ and $g$ as well as the time averages of expected utilities of both players converge to their Nash equilibrium values.
\begin{theorem}\label{theorem:average}
Let $\pmb{\theta}(0)$ and $\pmb{\phi}(0)$ be safe initial conditions and $(\pmb{P},\pmb{Q})=\Big(\binom{p}{1-p},\binom{q}{1-q}\Big)$, then for the system of Equation \ref{eq:eq_gda}
\begin{equation*}
    \begin{aligned}
    \lim_{T \to \infty} \frac{\int_{0}^T   f(\pmb{\theta}(t)) \mathrm{d}t } {T} &=& p ,\quad
    \lim_{T \to \infty} \frac{\int_{0}^T  r(\pmb{\theta}(t), \pmb{\phi}(t) ) \mathrm{d}t } {T} &=& \pmb{P}^\top U \pmb{Q}
    ,\quad
    \lim_{T \to \infty} \frac{\int_{0}^T   g(\pmb{\phi}(t)) \mathrm{d}t } {T} &=& q 
    \end{aligned}
\end{equation*}
\end{theorem}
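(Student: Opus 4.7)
The plan is to combine the periodicity of the orbit guaranteed by Theorem \ref{theorem:orbit} with two elementary integration identities read off from the dynamics of Equation \ref{eq:eq_gda}. Fix safe initial conditions and let $T_0>0$ be the period of $(\pmb{\theta}(t),\pmb{\phi}(t))$. Since $f\circ\pmb{\theta}$, $g\circ\pmb{\phi}$ and $r$ are continuous on the compact periodic orbit, a standard calculation (Ces\`aro average of a continuous periodic function) shows that each of the three limits $\lim_{T\to\infty}\frac{1}{T}\int_0^T x(t)\,dt$ equals the per-period average $\frac{1}{T_0}\int_0^{T_0} x(t)\,dt$. Thus it suffices to evaluate three integrals over $[0,T_0]$.

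For the averages of $f$ and $g$, I would rewrite the $\pmb{\phi}$-equation of \eqref{eq:eq_gda} as
\[
f(\pmb{\theta}(t))-p \;=\; \frac{\dot{g}(t)}{v\,\norm{\nabla g(X_{\pmb{\phi}(0)}(g(t)))}^2} \;=\; \frac{d}{dt}\,\Psi(g(t)),
\]
where $\Psi$ is an antiderivative of $1/\bigl(v\,\norm{\nabla g(X_{\pmb{\phi}(0)}(\cdot))}^2\bigr)$ on the compact interval actually swept by $g$. Safeness of the initialization, together with the strict monotonicity of $f,g$ along gradient-ascent trajectories established in Theorem \ref{theorem:reparametrization}, guarantees that $\norm{\nabla g(X_{\pmb{\phi}(0)}(\cdot))}$ is bounded away from zero on this interval, so $\Psi$ is a well-defined $C^1$ function. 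Integrating over one period and using $g(T_0)=g(0)$ yields $\int_0^{T_0}(f-p)\,dt=0$; the symmetric argument applied to $\dot{f}$ gives $\int_0^{T_0}(g-q)\,dt=0$, which establishes the limits for $f$ and $g$.

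For the average of $r$, I would expand $r=\pmb{F}(\pmb{\theta})^\top U\,\pmb{G}(\pmb{\phi})$ and use the definitions of $v,p,q$ to obtain the identity
\[
r \;=\; v\,(f-p)(g-q)\;-\;v\,pq\;+\;u_{1,1},
\]
the same algebra applied at $f=p, g=q$ identifying $\pmb{P}^\top U\,\pmb{Q} = -vpq+u_{1,1}$. Hence the claim reduces to showing $\int_0^{T_0}(f-p)(g-q)\,dt = 0$. Substituting $g-q = -\dot{f}/\bigl(v\,\norm{\nabla f(X_{\pmb{\theta}(0)}(f))}^2\bigr)$ from the $\pmb{\theta}$-equation turns the integrand into $-\frac{d}{dt}\Lambda(f(t))$ for an antiderivative $\Lambda$ of $(\cdot-p)/\bigl(v\,\norm{\nabla f(X_{\pmb{\theta}(0)}(\cdot))}^2\bigr)$, and periodicity of $f$ makes the boundary term vanish. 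The only non-routine step is this last identity: $\overline{fg} = \bar{f}\,\bar{g}$ is not true for a generic periodic pair, but it does hold here because the same conservative structure that underlies Theorem \ref{theorem:invariant} lets us express both $f-p$ and $(f-p)(g-q)$ as total time derivatives of functions of a single coordinate, and any such quantity integrates to zero around a closed orbit.
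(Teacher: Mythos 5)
Your proposal is correct and follows essentially the same route as the paper's proof: both reduce all three limits to showing that $(f-p)$, $(g-q)$ and $(f-p)(g-q)$ are exact time derivatives of antiderivatives in a single coordinate (via the reparametrizations $X_{\pmb{\theta}(0)}, X_{\pmb{\phi}(0)}$ and a change of variables), and both handle the payoff term through the fully-mixed-equilibrium identity $r - \pmb{P}^\top U \pmb{Q} = v(f-p)(g-q)$. The only cosmetic difference is bookkeeping: the paper lets $T\to\infty$ and argues the change-of-variables boundary integrals over $[f(0),f(T)]$ stay bounded (so they vanish after dividing by $T$), whereas you invoke the period $T_0$ from Theorem \ref{theorem:orbit} to make each per-period integral exactly zero.
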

     \section{Poincar\'{e} recurrence in hidden bilinear games with more strategies}\label{section:general}
In this section we will extend our results by allowing both the generator and the discriminator to play hidden bilinear games with more than two strategies. We will specifically study the case of hidden bilinear games where each coordinate of the vector valued functions $F$ and $G$ is controlled by disjoint subsets of the variables $\pmb{\theta}$ and $\pmb{\phi}$, i.e.
\begin{equation}
 \begin{aligned}
    \pmb{\theta} &= \begin{bmatrix}
           \pmb{\theta}_1 \\
           \pmb{\theta}_2 \\
           \vdots \\
           \pmb{\theta}_N
         \end{bmatrix}\quad
    \pmb{F}(\pmb{\theta}) &= \begin{bmatrix}
           f_1(\pmb{\theta}_1) \\
           f_2(\pmb{\theta}_2) \\
           \vdots \\
           f_N(\pmb{\theta}_N)
         \end{bmatrix}\quad
    \pmb{\phi} &= \begin{bmatrix}
           \pmb{\phi}_1 \\
           \pmb{\phi}_2 \\
           \vdots \\
           \pmb{\phi}_M
         \end{bmatrix}\quad
    \pmb{G}(\pmb{\phi}) &= \begin{bmatrix}
           g_1(\pmb{\phi}_1) \\
           g_2(\pmb{\phi}_2) \\
           \vdots \\
           g_M(\pmb{\phi}_M)
         \end{bmatrix}
\end{aligned}   
\end{equation}
where each function $f_i$ and $g_i$ takes an appropriately sized vector and returns a non-negative number. To account for possible constraints (e.g. that probabilities of each distribution must sum to one), we will incorporate this restriction using Lagrange Multipliers. The resulting problem becomes
\begin{equation}\label{eq:saddle-point-optimization}
    \min_{ \pmb{\theta} \in \mathbb{R}^n,  \mu \in \mathbb{R} }\max_{ \pmb{\phi} \in \mathbb{R}^m, \lambda \in \mathbb{R}} \pmb{F}(\pmb{\theta})^\top U \pmb{G}(\pmb{\phi}) + \lambda \left(\sum_{i=1}^N f_i(\pmb{\theta}_i) -1\right)+ \mu \left(\sum_{i=j}^M g_j(\pmb{\phi}_j) -1\right)
\end{equation}
Writing down the equations of gradient-ascent-descent we get
\begin{equation}
   \begin{aligned} \label{eq:eq_gda_multi}
   &\dot{\pmb{\theta}_i} &=& - \nabla f_i(\pmb{\theta}_i) \left(\sum_{j=1}^M u_{i,j}g_j(\pmb{\phi}_j) + \lambda \right) &&
   \dot{\pmb{\phi}_j} &=& \nabla g_j(\pmb{\phi}_j) \left(\sum_{i=1}^N u_{i,j}f_i(\pmb{\theta}_i) + \mu \right)\\
   &\dot{\mu} &=& - \left(\sum_{j=1}^M g_j(\pmb{\phi}_j) -1\right) && \dot{\lambda} &=& \left(\sum_{i=1}^N f_i(\pmb{\theta}_i) -1\right)  
\end{aligned} 
\end{equation}
Once again we can show that along the trajectories of the system of Equation \ref{eq:eq_gda_multi}, $\pmb{\theta}_i$ can be uniquely identified by $f_i(\pmb{\theta}_i)$ given $\pmb{\theta}_i(0)$ and the same holds for the discriminator. This allows us to construct functions $X_{\pmb{\theta}_i(0)}$ and $X_{\pmb{\phi}_j(0)}$ just like in Theorem \ref{theorem:reparametrization}. We can now write down a dynamical system involving only $f_i$ and $g_j$.
\begin{lemma}\label{lemma:functional-dynamics-multi}
If $\pmb{\theta}(t)$ and $\pmb{\phi}(t)$ are solutions to Equation \ref{eq:eq_gda_multi} with initial conditions $(\pmb{\theta}(0), \pmb{\phi}(0),\lambda(0),\mu(0))$, then we have that $f_i(t) = f_i(\pmb{\theta}_i(t))$ and $g_j(t) = g_j(\pmb{\phi}_j(t))$ satisfy the following equations
\begin{equation}
\begin{aligned} \label{eq:f_and_g_mult}
    \dot{f_i} = -\norm{\nabla f_i(X_{\pmb{\theta}_i(0)} (f_i))}^2 \left(\sum_{j=1}^M u_{i,j} g_j + \lambda \right)\\
    \dot{g_j} =  \norm{\nabla g_j(X_{\pmb{\phi}_j(0)} (g_j))}^2 \left(\sum_{i=1}^N u_{i,j}f_i + \mu \right)
\end{aligned}    
\end{equation}
\end{lemma}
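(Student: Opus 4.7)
The plan is to mirror the argument of \lemref{lemma:functional-dynamics} block by block, exploiting the fact that in Equation~\ref{eq:eq_gda_multi} each coordinate block $\pmb{\theta}_i$ has dynamics of the structural form $\dot{\pmb{\theta}_i} = \nabla f_i(\pmb{\theta}_i) \cdot h_i(t)$, where $h_i(t) = -\bigl(\sum_{j=1}^M u_{i,j} g_j(\pmb{\phi}_j(t)) + \lambda(t)\bigr)$ is a purely time-dependent scalar along any fixed trajectory. Although $h_i$ depends on the entire joint state through $\pmb{\phi}$ and $\lambda$, it does not depend on $\pmb{\theta}_i$, so for a fixed solution of the system we may regard $h_i$ as an externally given scalar function of time, which is precisely the setup of \lemref{lemma:reparametrization}.

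First, I would invoke \lemref{lemma:reparametrization} with $k = f_i$ and $h = h_i$ to conclude that $\pmb{\theta}_i(t)$ traces out a time-reparametrization of the pure gradient-ascent orbit of $f_i$ starting from $\pmb{\theta}_i(0)$. Reusing the argument of \theoref{theorem:reparametrization}, on any such gradient-ascent orbit the scalar $f_i$ is strictly monotone (since $\tfrac{d}{ds} f_i(\rho_i(s)) = \|\nabla f_i(\rho_i(s))\|^2 > 0$ away from stationary points), so $f_i$ serves as a global parameter along the orbit and we obtain a well-defined $C^1$ map $X_{\pmb{\theta}_i(0)} : f_{i,\pmb{\theta}_i(0)} \to \mathbb{R}^{n_i}$ satisfying $\pmb{\theta}_i(t) = X_{\pmb{\theta}_i(0)}(f_i(\pmb{\theta}_i(t)))$. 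The symmetric argument applied to $\pmb{\phi}_j$, with driving scalar $\sum_{i=1}^N u_{i,j} f_i(\pmb{\theta}_i(t)) + \mu(t)$, yields $X_{\pmb{\phi}_j(0)}$.

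Next, I would differentiate $f_i(t) = f_i(\pmb{\theta}_i(t))$ by the chain rule to get $\dot{f_i} = \nabla f_i(\pmb{\theta}_i)^\top \dot{\pmb{\theta}_i}$ and substitute the $\pmb{\theta}_i$-equation of \eqref{eq:eq_gda_multi}, which produces $\dot{f_i} = -\|\nabla f_i(\pmb{\theta}_i)\|^2 \bigl(\sum_{j=1}^M u_{i,j} g_j(\pmb{\phi}_j) + \lambda\bigr)$. Replacing $\pmb{\theta}_i$ inside the squared-norm factor by $X_{\pmb{\theta}_i(0)}(f_i)$ and rewriting $g_j(\pmb{\phi}_j)$ as $g_j$ yields the first line of \eqref{eq:f_and_g_mult}; the computation for $\dot{g_j}$ is entirely symmetric.

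The main subtlety I anticipate is the additional coupling introduced by the Lagrange multipliers $\lambda(t),\mu(t)$, which are themselves state variables rather than fixed parameters. One must verify that this does not break the per-block reparametrization, but in fact it does not: along any fixed trajectory $\lambda$ and $\mu$ are merely scalar functions of $t$ and can be absorbed into the scalar factor $h_i$ (respectively its $g_j$-analog) before invoking \lemref{lemma:reparametrization}. The block-diagonal hypothesis on $\pmb{F}$ and $\pmb{G}$ --- each $f_i$ depending only on $\pmb{\theta}_i$, each $g_j$ only on $\pmb{\phi}_j$ --- is exactly what makes this per-block decomposition uniform across coordinates and hence reduces the multi-strategy case to $N+M$ independent applications of the 2x2 reasoning.
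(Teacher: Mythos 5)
Your proposal is correct and follows essentially the same route as the paper's proof: chain rule on $f_i(\pmb{\theta}_i(t))$ and $g_j(\pmb{\phi}_j(t))$, substitution of the dynamics of Equation~\ref{eq:eq_gda_multi}, and a per-block invocation of the reparametrization machinery of \lemref{lemma:reparametrization} and \theoref{theorem:reparametrization} to replace $\pmb{\theta}_i$ and $\pmb{\phi}_j$ by $X_{\pmb{\theta}_i(0)}(f_i)$ and $X_{\pmb{\phi}_j(0)}(g_j)$ inside the squared-norm factors. If anything, you are more explicit than the paper, which simply cites \theoref{theorem:reparametrization} for the existence of the $N+M$ maps; your observation that $\lambda(t)$ and $\mu(t)$, though state variables, act as externally given scalars along a fixed trajectory is exactly the implicit justification the paper relies on.
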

Similarly to the previous section, we can define a notion of safety for Equation \ref{eq:eq_gda_multi}. Let us assume that the hidden Game has a fully mixed Nash equilibrium $(\pmb{p}, \pmb{q})$. Then we can define
\begin{definition}
We will call the initialization $(\pmb{\theta}(0),\pmb{\phi}(0), \lambda(0), \mu(0))$ safe for Equation \ref{eq:eq_gda_multi} if $\pmb{\theta}_i(0)$ and $\pmb{\phi}_j(0)$ are not stationary points of $f_i$ and $g_j$ respectively and $p_i \in f_{i_{\pmb{\theta}_i(0)}}$ and $q_j \in g_{j_{\pmb{\phi}_j(0)}}$.
\end{definition}
\begin{theorem} \label{theorem:invariant_multi}
Assume that $(\pmb{\theta}(0),\pmb{\phi}(0), \lambda(0), \mu(0))$ is a safe initialization. Then there exist $\lambda_*$ and $\mu_*$ such that the following quantity is time invariant:
\begin{align*}
    H(\pmb{F}, \pmb{G}, \lambda, \mu) = &\sum_{i=1}^N \int_{p_i}^{f_i} \frac{z-p_i}{\norm{\nabla f_i(X_{\pmb{\theta}_i(0)} (z))}^2} \mathrm{d}z +\sum_{j=1}^M \int_{q_j}^{g_j} \frac{z-q_j}{\norm{\nabla g_j(X_{\pmb{\phi}_j(0)} (z))}^2} \mathrm{d}z  + \\
    &\int_{\lambda^*}^{\lambda} \left(z-\lambda^*\right) \mathrm{d}z
    + \int_{\mu^*}^{\mu} \left(z-\mu^*\right) \mathrm{d}z
\end{align*}
\end{theorem}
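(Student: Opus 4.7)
The plan is to imitate the proof strategy of Theorem \ref{theorem:invariant} but now with the extra Lagrange-multiplier terms, and to pick $\lambda^*,\mu^*$ as the multipliers that make the fully-mixed Nash $(\pmb{p},\pmb{q})$ a stationary point of the augmented Lagrangian in Equation \ref{eq:saddle-point-optimization}. Writing the candidate $H$ as a sum of four decoupled pieces (one per block of variables $f_i$, $g_j$, $\lambda$, $\mu$), the partial derivatives are immediate:
\begin{align*}
\frac{\partial H}{\partial f_i}=\frac{f_i-p_i}{\norm{\nabla f_i(X_{\pmb{\theta}_i(0)}(f_i))}^2},\qquad
\frac{\partial H}{\partial g_j}=\frac{g_j-q_j}{\norm{\nabla g_j(X_{\pmb{\phi}_j(0)}(g_j))}^2},\qquad
\frac{\partial H}{\partial\lambda}=\lambda-\lambda^*,\qquad
\frac{\partial H}{\partial\mu}=\mu-\mu^*.
\end{align*}
Plugging in the reduced dynamics from Lemma \ref{lemma:functional-dynamics-multi} together with $\dot\lambda=\sum_i f_i-1$ and $\dot\mu=-(\sum_j g_j-1)$, the awkward $\norm{\nabla f_i}^2,\norm{\nabla g_j}^2$ factors cancel, leaving a polynomial expression in $f_i,g_j,\lambda,\mu$ whose vanishing has to be checked.

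The choice of $\lambda^*,\mu^*$ is dictated by the first-order conditions for $(\pmb{p},\pmb{q})$ to be a fully mixed Nash: since every strategy is in the support, the expected payoff against the opponent's mixture must be constant across rows and columns, giving $\sum_j u_{i,j}q_j=-\lambda^*$ for every $i$ and $\sum_i u_{i,j}p_i=-\mu^*$ for every $j$. Contracting the first identity with $p_i$ and the second with $q_j$ shows $\lambda^*=\mu^*=-\pmb{p}^\top U\pmb{q}$ (the value of the hidden game); this identity is the one bookkeeping fact that the final cancellation rests on.

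Now I expand $\dot H$ in three groups. The purely bilinear cross terms collapse by the Nash identities:
\begin{align*}
-\sum_{i,j}(f_i-p_i)u_{i,j}g_j+\sum_{i,j}(g_j-q_j)u_{i,j}f_i
=\sum_{i,j}p_i u_{i,j}g_j-\sum_{i,j}f_i u_{i,j}q_j
=-\mu^*\sum_j g_j+\lambda^*\sum_i f_i.
\end{align*}
The $\lambda$-linear contributions combine as $-\lambda(\sum_i f_i-1)+(\lambda-\lambda^*)(\sum_i f_i-1)=-\lambda^*(\sum_i f_i-1)$, and analogously the $\mu$-linear contributions give $\mu^*(\sum_j g_j-1)$. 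Summing all three groups yields $\dot H=\lambda^*-\mu^*$, which equals zero by the previous paragraph. Hence $H$ is constant along every orbit starting from a safe initialization.

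The main obstacle is really just the bookkeeping: choosing the right $\lambda^*,\mu^*$ up front and carefully tracking signs through Equation \ref{eq:eq_gda_multi}, then recognizing that the single nontrivial game-theoretic input needed is the equality $\lambda^*=\mu^*$, which follows from the Nash support conditions. One mild regularity point to justify along the way is that $X_{\pmb{\theta}_i(0)}$ and $X_{\pmb{\phi}_j(0)}$ really do extend to the multi-player setting exactly as in Theorem \ref{theorem:reparametrization}, so that each $\int_{p_i}^{f_i}(z-p_i)/\norm{\nabla f_i(X_{\pmb{\theta}_i(0)}(z))}^2\,\mathrm{d}z$ is well-defined on the attainable range $f_{i,\pmb{\theta}_i(0)}\ni p_i$ guaranteed by safety.
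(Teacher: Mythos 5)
Your proposal is correct and follows essentially the same route as the paper: choose $\lambda^*,\mu^*$ from the KKT/first-order conditions of the fully mixed Nash equilibrium, differentiate $H$ along the reduced dynamics of Lemma \ref{lemma:functional-dynamics-multi} together with $\dot{\lambda}=\sum_i f_i-1$, $\dot{\mu}=-(\sum_j g_j-1)$, and watch the bilinear and multiplier terms cancel. The only (harmless) difference is bookkeeping: the paper shifts $\lambda\mapsto\lambda-\lambda^*$, $\mu\mapsto\mu-\mu^*$ inside the dynamics so that everything cancels pairwise without ever comparing $\lambda^*$ and $\mu^*$, whereas your grouping leaves the residue $\dot{H}=\lambda^*-\mu^*$ and you correctly close the gap with the extra (true) identity $\lambda^*=\mu^*=-\pmb{p}^\top U\pmb{q}$ obtained by contracting the Nash conditions with $\pmb{p}$ and $\pmb{q}$.
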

Given that even our reduced dynamical system has more than two state variables we cannot apply the Poincar\'{e}-Bendixson Theorem. Instead we can prove that there exists a one to one differentiable transformation of our dynamical system so that the resulting system becomes divergence free. Applying Louville's formula, the flow of the the transformed system is volume preserving. Combined with the invariant of Theorem \ref{theorem:invariant_multi}, we can prove that the variables of the transformed system remain bounded. This gives us the following guarantees
\begin{theorem}\label{theorem:differomorphism-poincare-recurrence}
Assume that $(\pmb{\theta}(0),\pmb{\phi}(0), \lambda(0), \mu(0))$ is a safe initialization. Then the trajectory under the dynamics of  Equation \ref{eq:eq_gda_multi} is diffeomoprphic to one trajectory of a Poincar\'{e} recurrent flow.
\end{theorem}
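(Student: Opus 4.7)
The plan is to compose two diffeomorphisms and then invoke the Poincaré recurrence theorem on the terminal system. The first diffeomorphism is already supplied by Lemma \ref{lemma:functional-dynamics-multi}: via the smooth parametrizations $X_{\pmb{\theta}_i(0)}$ and $X_{\pmb{\phi}_j(0)}$, which are strictly monotone on each $f_{i,\pmb{\theta}_i(0)}$ and $g_{j,\pmb{\phi}_j(0)}$ because the safety assumption keeps $\|\nabla f_i\|$ and $\|\nabla g_j\|$ strictly positive along the trajectory, the orbit in $(\pmb{\theta},\pmb{\phi},\lambda,\mu)$ maps bijectively to an orbit of the reduced system of Equation \ref{eq:f_and_g_mult} augmented with $\dot\lambda=\sum_i f_i-1$, $\dot\mu=-(\sum_j g_j -1)$. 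The reduced vector field is not yet divergence-free because of the $\|\nabla f_i\|^2$ and $\|\nabla g_j\|^2$ factors, so my next step is a further change of coordinates that absorbs them.

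I would set
\begin{equation*}
u_i=\int_{p_i}^{f_i}\!\frac{\mathrm{d}z}{\|\nabla f_i(X_{\pmb{\theta}_i(0)}(z))\|^2},\qquad v_j=\int_{q_j}^{g_j}\!\frac{\mathrm{d}z}{\|\nabla g_j(X_{\pmb{\phi}_j(0)}(z))\|^2},
\end{equation*}
which gives a $C^1$ diffeomorphism from the reduced coordinates to $(\pmb{u},\pmb{v},\lambda,\mu)$. By the chain rule the dynamics become
\begin{equation*}
\dot u_i=-\Bigl(\textstyle\sum_j u_{ij}\tilde g_j(v_j)+\lambda\Bigr),\; \dot v_j=\textstyle\sum_i u_{ij}\tilde f_i(u_i)+\mu,\; \dot\lambda=\textstyle\sum_i \tilde f_i(u_i)-1,\; \dot\mu=-\bigl(\textstyle\sum_j \tilde g_j(v_j)-1\bigr),
\end{equation*}
where $\tilde f_i,\tilde g_j$ denote $f_i,g_j$ re-expressed through the inverse substitution. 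Inspecting component by component, no coordinate's derivative depends on itself; hence the divergence vanishes identically and Liouville's formula shows the flow is volume-preserving.

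Third, I would pull the invariant $H$ of Theorem \ref{theorem:invariant_multi} into the $(\pmb{u},\pmb{v},\lambda,\mu)$ coordinates and verify that its sublevel sets $\{H\le C\}$ are compact. The same substitution turns each term $\int_{p_i}^{f_i}\tfrac{z-p_i}{\|\nabla f_i\|^2}\mathrm{d}z$ into $\int_0^{u_i}(\tilde f_i(s)-p_i)\,\mathrm{d}s$ (and analogously for the $g_j$ terms); these are non-negative and grow without bound in $|u_i|$ because $\tilde f_i-p_i$ is monotone with a single zero at $u_i=0$. The $\lambda,\mu$ contributions are the explicit quadratics $\tfrac12(\lambda-\lambda^*)^2$ and $\tfrac12(\mu-\mu^*)^2$, which confine those coordinates directly. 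Thus $\{H\le C\}$ is a closed, bounded, flow-invariant set of finite Lebesgue measure.

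The conclusion then follows from the classical Poincaré recurrence theorem applied to the volume-preserving transformed flow restricted to $\{H\le C\}$: almost every point in this set is recurrent, so the transformed flow is Poincaré recurrent, and composing the two coordinate changes gives the desired diffeomorphism of trajectories. The step I expect to be the main obstacle is the coercivity argument above: to guarantee compactness of $\{H\le C\}$ I must control the boundary behaviour of $\tilde f_i,\tilde g_j$ carefully, using either that the reachable set $f_{i,\pmb{\theta}_i(0)}$ is itself a bounded interval (forcing $u_i$ to stay bounded together with $f_i$, as happens with the sigmoid activations that underlie Theorem \ref{theorem:sigmoid-poincare-recurrence-behavior}), or that $\|\nabla f_i\|$ decays sharply enough along the gradient curve for the weighted integral $u_i$ to remain coercive relative to $\tilde f_i-p_i$.
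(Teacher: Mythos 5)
Your proposal is correct and follows essentially the same route as the paper's proof: the same two coordinate changes (your $u_i, v_j$ are precisely the paper's $a_i, b_j$), the same observation that no coordinate's derivative depends on itself so the field is divergence-free and Liouville gives volume preservation, the same use of the invariant $H$ of Theorem \ref{theorem:invariant_multi} to confine orbits, and the same appeal to the Poincar\'{e} recurrence theorem together with conjugacy. One small correction to your closing hedge: for sigmoids $u_i$ does \emph{not} stay bounded together with $f_i$ (it diverges as $f_i$ approaches the endpoints of $(0,1)$ where $\norm{\nabla f_i}\to 0$); what actually bounds $u_i$ is exactly the coercivity estimate you already gave in your third paragraph — $\tilde f_i(s)-p_i$ is bounded away from zero for $|s|\geq\epsilon$, with safety placing $p_i$ in the interior of the attainable range — which is also the paper's boundedness argument.
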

This result implies that if the corresponding trajectory of the Poincar\'{e} recurrent flow is itself recurrent, which almost all of them are, then the trajectory of the dynamics of Equation \ref{eq:eq_gda_multi} is also recurrent. This is however not enough to reason about how often any of the trajectories of the dynamics of Equation \ref{eq:eq_gda_multi} is recurrent. In order to prove that the flow of Equation \ref{eq:eq_gda_multi} is Poincar\'{e} recurrent we will make some additional assumptions
\begin{theorem}\label{theorem:sigmoid-poincare-recurrence-behavior}
Let $f_i$ and $g_j$ be sigmoid functions. Then the flow of  Equation \ref{eq:eq_gda_multi} is Poincar\'{e} recurrent. The same holds for all functions $f_i$ and $g_j$ that are one to one functions and for which all initializations are safe.
\end{theorem}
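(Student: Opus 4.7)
The plan is to augment Theorem~\ref{theorem:differomorphism-poincare-recurrence} with two additional ingredients, from which the measure-theoretic Poincar\'{e} recurrence theorem can be invoked: (i) safety of almost every initial condition, so that the conserved quantity $H$ and the diffeomorphism are defined everywhere we care about, and (ii) compactness of the level sets of $H$, so that the volume-preserving transformed flow lives on a compact invariant set.

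\emph{Safety is generic.} For the sigmoid $\sigma(x)=(1+e^{-x})^{-1}$ we have $\sigma'(x)>0$ for all $x$, so $\sigma$ has no stationary points and no initialization is ruled out on that account. Moreover $\sigma(\mathbb{R})=(0,1)$, and the gradient-ascent trajectory $\dot{x}=\sigma'(x)$ through any starting point sweeps out all of $\mathbb{R}$, hence $f_{i,\pmb{\theta}_i(0)}=g_{j,\pmb{\phi}_j(0)}=(0,1)$. Since the hidden game is assumed to have a fully mixed Nash equilibrium, $p_i,q_j\in(0,1)$ and every initialization is safe. The same line of reasoning gives the second sentence of the statement for one-to-one activations with non-vanishing gradient, once safety is assumed for all initializations.

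\emph{Level sets of $H$ are compact.} Using Theorem~\ref{theorem:invariant_multi}, the quadratic terms $\tfrac{1}{2}(\lambda-\lambda^*)^2$ and $\tfrac{1}{2}(\mu-\mu^*)^2$ already bound $\lambda$ and $\mu$ on any level set $\{H\le c\}$. For the $f_i$-component, the key step is to show that $\int_{p_i}^{z}\frac{\zeta-p_i}{\|\nabla f_i(X_{\pmb{\theta}_i(0)}(\zeta))\|^2}\,\mathrm{d}\zeta$ diverges as $z\to 0^+$ and as $z\to 1^-$. The crucial observation is that along the ascent trajectory the sigmoid identity $\sigma'(\sigma^{-1}(z))=z(1-z)$ (up to the factor contributed by the inner linear map) forces $\|\nabla f_i(X_{\pmb{\theta}_i(0)}(z))\|^2$ to vanish like a positive constant times $z^2(1-z)^2$ at the endpoints, so the integrand is non-integrable at both $0$ and $1$. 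Hence on $\{H\le c\}$ each $f_i$ is confined to a compact subinterval of $(0,1)$, and invoking the one-to-one reparameterization of Theorem~\ref{theorem:reparametrization} each $\pmb{\theta}_i$ is confined to a compact subset of its gradient-ascent arc; an identical argument handles $g_j$ and $\pmb{\phi}_j$.

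\emph{Assembly and main obstacle.} By Theorem~\ref{theorem:differomorphism-poincare-recurrence} a $C^1$ diffeomorphism $\Psi$ conjugates the flow of \eqref{eq:eq_gda_multi} to a divergence-free, hence volume-preserving, flow. Restricting to the image of a level set of $H$, which is compact by the previous step, the classical Poincar\'{e} recurrence theorem yields a null set of non-recurrent points in the transformed coordinates. Pulling back through $\Psi$, which is Lipschitz on the compact piece and hence preserves null sets, returns a null set of non-recurrent points for \eqref{eq:eq_gda_multi}, establishing Poincar\'{e} recurrence in the sense of Definition~\ref{definition:reccurent}. The main obstacle is the compactness step: quantifying the decay of $\|\nabla f_i\|$ along the trajectory precisely enough to conclude divergence of the integrals defining $H$, and verifying that this divergence survives the reparameterization by $X_{\pmb{\theta}_i(0)}$ (which is exactly why a sigmoid hypothesis, rather than merely bounded range, is what makes the argument go through). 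Once the compactness is in hand, the rest is a routine assembly of volume preservation, Poincar\'{e} recurrence, and the diffeomorphism.
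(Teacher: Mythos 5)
Your overall route is the paper's route: verify that sigmoids make every initialization safe, obtain boundedness from the divergence of the integrals defining $H$, then combine volume preservation of the transformed system with the Poincar\'{e} recurrence theorem and pull null sets back through a conjugacy. Your two supporting steps are sound. Safety for sigmoids is exactly as you argue, and your divergence estimate (that $\norm{\nabla f_i(X_{\pmb{\theta}_i(0)}(z))}^2$ vanishes like $z^2(1-z)^2$, so the integrand of $H$ is non-integrable at $0$ and $1$) reproduces, in sharper sigmoid-specific form, the boundedness claims the paper already establishes inside the proof of Theorem \ref{theorem:differomorphism-poincare-recurrence}, where they appear as ``$a_i(t)\to\pm\infty \Rightarrow H \to \infty$.''

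There is, however, a genuine gap at the pivot of your assembly, and it is precisely the point this theorem exists to address. You write that ``by Theorem \ref{theorem:differomorphism-poincare-recurrence} a $C^1$ diffeomorphism $\Psi$ conjugates the flow of Equation \ref{eq:eq_gda_multi} to a divergence-free, hence volume-preserving, flow.'' That theorem supplies no such global object: its conjugacy is built from the maps $X_{\pmb{\theta}_i(0)}, X_{\pmb{\phi}_j(0)}$, which depend on the initial condition, so it only says that each individual trajectory is diffeomorphic to some trajectory of some recurrent flow --- and the paper explicitly notes after its statement that this is not enough to conclude recurrence of the original flow, since a measure-zero set of non-recurrent points inside each initialization's own conjugate system does not aggregate into a measure-zero set of non-recurrent initializations for Equation \ref{eq:eq_gda_multi}. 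The entire content of the paper's proof of the present theorem is the observation you never state: when $f_i$ and $g_j$ are one-to-one, $X_{\pmb{\theta}_i(0)} = f_i^{-1}$ and $X_{\pmb{\phi}_j(0)} = g_j^{-1}$ are independent of the initialization, so all trajectories are conjugated by one and the same diffeomorphism to a single volume-preserving flow, and only then do Lemma \ref{lem:congugacy-recurrence} and the fact that diffeomorphisms preserve null sets transfer ``almost every point is recurrent'' back to the original coordinates. Your computations implicitly use this (you substitute $\sigma^{-1}$ for $X$ in the compactness step), so the repair is a sentence; but the justification you actually give for a globally defined conjugacy --- safety of every initialization --- would not suffice on its own. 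For instance, with $f(\pmb{\theta}) = \sigma(\theta_1)$ on $\mathbb{R}^n$, $n\ge 2$, the gradient never vanishes and every initialization is safe, yet $X_{\pmb{\theta}(0)}(z) = (\sigma^{-1}(z), \theta_2(0), \dots)$ genuinely varies with $\pmb{\theta}(0)$; safety and initialization-independence of the conjugacy are different issues, and only the latter, supplied by invertibility, closes the argument.
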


It is worth noting that for the unconstrained version of the previous min-max problem
we arrive at the same conclusions/theorems by repeating the above analysis without using the Lagrange multipliers.

     \section{Spurious equilibria}\label{section:spurious}
In the previous sections we have analyzed the behavior of safe initializations and we have proved that they lead to either periodic or recurrent trajectories. For initializations that are not safe for some equilibrium of the hidden game, game theoretically interesting fixed points are not even realizable solutions. In fact we can prove something stronger:

\begin{theorem} \label{theorem:spurious}
One can construct functions $f$ and $g$ for the system of Equation \ref{eq:eq_gda} so that for a positive measure set of initial conditions the trajectories converge to fixed points that do not correspond to equilibria of the hidden game.
\end{theorem}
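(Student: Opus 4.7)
The plan is to construct explicit smooth $f,g$ whose GDA system admits an asymptotically stable fixed point that is not a min-max solution of the hidden bilinear game. The guiding observation is that $\dot{\pmb{\theta}} = -v(g(\pmb{\phi})-q)\nabla f(\pmb{\theta})$ is a positively time-rescaled gradient ascent on $f$ whenever $v(g-q)<0$, and symmetrically $\dot{\pmb{\phi}}$ is a positively rescaled gradient ascent on $g$ whenever $v(f-p)>0$. If both sign conditions can be made to persist along the whole orbit, the two subsystems decouple into pure gradient ascents, each converging to a stationary point of its own potential, producing a joint limit that is a fixed point of the coupled system but not a Nash equilibrium of $U$.

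Assuming WLOG $v>0$, pick $\alpha\in(0,1-p)$ and $\beta\in(0,q)$ and set
\begin{equation*}
f(\pmb{\theta}) = p + \alpha\, e^{-\|\pmb{\theta}\|^2}, \qquad g(\pmb{\phi}) = \beta\, e^{-\|\pmb{\phi}\|^2}.
\end{equation*}
Both are $C^\infty$, take values in $[0,1]$, and each has a unique stationary point at the origin which is a global maximum. The range of $f$ is $(p,p+\alpha]$ and the range of $g$ is $(0,\beta]$, so uniformly in $(\pmb{\theta},\pmb{\phi})$ we have $f>p$ and $g<q$; in particular every initialization falls outside the safe set of Definition \ref{definition:safe-conditions}, which is exactly the regime in which spurious attractors should be expected.

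Fix any initial condition with $\pmb{\theta}(0)\neq 0$ and $\pmb{\phi}(0)\neq 0$. Applying Lemma \ref{lemma:reparametrization} to each component with rescaling factors $h_\theta(t) = -v(g(\pmb{\phi}(t))-q)$ and $h_\phi(t) = v(f(\pmb{\theta}(t))-p)$, the trajectory $\pmb{\theta}(t)$ lies along the gradient-ascent orbit of $f$ through $\pmb{\theta}(0)$ and $\pmb{\phi}(t)$ along the gradient-ascent orbit of $g$ through $\pmb{\phi}(0)$, traversed at reparametrized times $\tau_\theta(t)=\int_0^t h_\theta\,ds$ and $\tau_\phi(t)=\int_0^t h_\phi\,ds$. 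Because the only stationary point of each potential is the origin (a global maximum whose basin of attraction is all of the ambient space), the unrescaled gradient-ascent orbits converge to $0$; hence $\pmb{\theta}(t)\to 0$ and $\pmb{\phi}(t)\to 0$ as soon as $\tau_\theta(t),\tau_\phi(t)\to\infty$. The limit $(0,0)$ is a fixed point of Equation \ref{eq:eq_gda} since $\nabla f(0)=\nabla g(0)=0$, yet $(f(0),g(0)) = (p+\alpha,\beta) \neq (p,q)$, so it does not correspond to an equilibrium of $U$.

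The only nontrivial step is verifying that the reparametrized times diverge, which is where the uniform sign gap in the construction becomes essential; this is the obstacle I expect to spend the most care on. Along the trajectory, $g(\pmb{\phi}(t))\le\beta$ gives $h_\theta(t)\ge v(q-\beta)>0$, and $f(\pmb{\theta}(t))\ge f(\pmb{\theta}(0))>p$ (since $f$ is non-decreasing along a gradient-ascent orbit) gives $h_\phi(t)\ge v(f(\pmb{\theta}(0))-p)>0$. Both rescalings are therefore uniformly bounded below by positive constants, so $\tau_\theta(t)$ and $\tau_\phi(t)$ grow at least linearly in $t$ and diverge. The set $(\mathbb{R}^n\setminus\{0\})\times(\mathbb{R}^m\setminus\{0\})$ of qualifying initial conditions has full Lebesgue measure, which is a fortiori positive, completing the claim.
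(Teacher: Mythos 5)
Your proof is correct, but it takes a genuinely different route from the paper's. The paper never writes down explicit $f,g$: it computes the Jacobian of the vector field of Equation \ref{eq:eq_gda} at a joint stationary point $(\pmb{\theta}^*,\pmb{\phi}^*)$ of $f$ and $g$, observes that it is block diagonal with eigenvalues $-v\left(g(\pmb{\phi}^*)-q\right)\lambda_i(\nabla^2 f(\pmb{\theta}^*))$ and $v\left(f(\pmb{\theta}^*)-p\right)\lambda_i(\nabla^2 g(\pmb{\phi}^*))$, stipulates conditions making all of them negative (for $v>0$: invertible Hessians, $\pmb{\theta}^*$ a local minimum with $f(\pmb{\theta}^*)<p$, $\pmb{\phi}^*$ a local minimum with $g(\pmb{\phi}^*)>q$), and then invokes the Stable Manifold Theorem (Theorem \ref{theorem:smt-contiuous}) to obtain a full-dimensional, hence positive-measure, set of converging initializations. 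Your Gaussian bumps are in fact an explicit instance of the mirror-image case of that family (global maxima with $f>p$ and $g<q$, which makes the same block-diagonal Jacobian negative definite), but your convergence argument is global rather than local: since $f>p$ and $g<q$ hold uniformly on all of phase space, both rescaling factors in Lemma \ref{lemma:reparametrization} stay strictly positive along every orbit, the coupled system decouples onto gradient-ascent orbits of two radially symmetric potentials, and your uniform lower bounds on the reparametrized clocks force convergence of every initialization off a null set (indeed of every initialization) to the spurious point $(0,0)$ with $(f(0),g(0))=(p+\alpha,\beta)\neq(p,q)$. What your approach buys is a stronger conclusion (full-measure, essentially global convergence) by elementary ODE estimates, with no linearization or stable-manifold machinery; what the paper's local argument buys is flexibility --- its spurious attractors can coexist with safe initializations and recurrent orbits in the same game, which better illustrates the coexistence phenomenon, and its eigenvalue computation is reused verbatim in the discrete-time result (Theorem \ref{theorem:spurious-discrete}), where your decoupling trick would not transfer as directly. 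Two minor points to patch, neither fatal: you should note global existence of the flow (your vector field is bounded and globally Lipschitz, so solutions exist for all $t\geq 0$; this also licenses the slightly circular use of Lemma \ref{lemma:reparametrization} with trajectory-dependent $h$, exactly as in the paper's proof of Theorem \ref{theorem:reparametrization}), and the ``WLOG $v>0$'' deserves one sentence exhibiting the mirrored construction $f=p-\alpha e^{-\norm{\pmb{\theta}}^2}$, $g=q+\beta e^{-\norm{\pmb{\phi}}^2}$ for $v<0$, which the paper dismisses in the same way.
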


The main idea behind our theorem is that we can construct functions $f$ and $g$ that have local optima that break the safety assumption. For a careful choice of the value of the local optima we can make these fixed points stable and then the Stable Manifold Theorem guarantees that a non zero measure set of points in the vicinity of the fixed point converges to it. Of course the idea of these constructions can be extended to our analysis of hidden games with more strategies.     \section{Discrete Time Gradient-Ascent-Descent}\label{section:discrete}
In this section we will discuss the implications of our analysis of continuous time gradient-ascent-descent dynamics on the properties of their discrete time counterparts. In general, the behavior of discrete time dynamical systems can be significantly different \cite{liyorke,BaileyEC18,palaiopanos2017multiplicative} so it is critical to perform this non-trivial analysis. We are able to show that the picture of non-equilibriation persists for an interesting class of hidden bilinear games.
\begin{theorem} 
Let $f_i$ and $g_j$ be  sigmoid functions. Then for the discretized version of the system of Equation \ref{eq:eq_gda_multi} and for safe intializations, function $H$ of Theorem \ref{theorem:invariant_multi} is non-decreasing. \label{theorem:discrete-hamiltonian}
\end{theorem}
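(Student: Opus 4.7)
The plan is to show that, in the sigmoid case, the Hamiltonian $H$ of Theorem \ref{theorem:invariant_multi} is a \emph{separable convex function} of the underlying parameters $(\pmb{\theta},\pmb{\phi},\lambda,\mu)$. Once this is established, the standard first-order convexity inequality, combined with the continuous-time invariance $\dot{H}\equiv 0$ already proved in Theorem \ref{theorem:invariant_multi}, immediately gives monotonicity along the explicit-Euler step.

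\emph{Step 1 (closed form).} For each $i$, perform the substitution $z=\sigma(y)$ in the integral $\int_{p_i}^{f_i}\tfrac{z-p_i}{(z(1-z))^2}\,dz$, using $dz=z(1-z)\,dy$. The integrand collapses to $\tfrac{\sigma(y)-p_i}{\sigma(y)(1-\sigma(y))}$. Partial fractions together with the identities $1/\sigma(y)=1+e^{-y}$ and $1/(1-\sigma(y))=1+e^{y}$, and the fact that $\sigma^{-1}(f_i)=\theta_i$, yield
\[
H_i^{\theta}(\theta_i)\;=\;(1-2p_i)\,\theta_i\;+\;p_i\,e^{-\theta_i}\;+\;(1-p_i)\,e^{\theta_i}\;+\;C_i.
\]
A symmetric computation gives $H_j^{\phi}(\phi_j)=(1-2q_j)\phi_j+q_j e^{-\phi_j}+(1-q_j)e^{\phi_j}+C_j'$, while the Lagrangian contributions $(\lambda-\lambda^*)^2/2$ and $(\mu-\mu^*)^2/2$ are already convex quadratics.

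\emph{Step 2 (separable convexity).} Since $p_i\in(0,1)$, $(H_i^{\theta})''(\theta_i)=p_i e^{-\theta_i}+(1-p_i)e^{\theta_i}>0$, so each $H_i^{\theta}$ is strictly convex; likewise each $H_j^{\phi}$. Because $H$ decomposes as a sum of convex functions of \emph{disjoint} blocks of variables, $H$ itself is convex on $\mathbb{R}^{n}\times\mathbb{R}^{m}\times\mathbb{R}\times\mathbb{R}$, so for all $x,x'$,
\[
H(x')-H(x)\;\ge\;\nabla H(x)^{\top}\,(x'-x).
\]

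\emph{Step 3 (first-order term vanishes).} Applying the inequality at $x=x^{k}$, $x'=x^{k+1}$, the discrete increment $x^{k+1}-x^{k}$ is exactly $\alpha$ times the continuous-time vector field of Equation \ref{eq:eq_gda_multi} evaluated at $x^{k}$. Therefore $\nabla H(x^{k})^{\top}(x^{k+1}-x^{k})=\alpha\,\dot{H}(x^{k})$. By Theorem \ref{theorem:invariant_multi}, $\dot{H}\equiv 0$ once $\lambda^{*},\mu^{*}$ are chosen so that the linear cross-terms cancel; unwinding the algebra, this forces $\lambda^{*}=\mu^{*}=-v$, where $v=\sum_i u_{ij}p_i=\sum_j u_{ij}q_j$ is the value of the hidden game, and such a choice exists precisely because $(\pmb{p},\pmb{q})$ is a Nash equilibrium. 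Combining Steps 2 and 3 gives $H(x^{k+1})-H(x^{k})\ge \alpha\,\dot{H}(x^{k})=0$, as claimed.

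\emph{Main obstacle.} The non-routine step is Step 1: spotting that the sigmoid substitution turns the awkward integrand $(z-p_i)/(z(1-z))^{2}$ into a sum of pure exponentials in $\theta_i$, making separable convexity manifest. Once that reparametrisation is in hand, Step 2 is immediate and Step 3 is essentially a discrete-time restatement of the calculation that certifies $\dot{H}=0$ in continuous time. In particular, the proof requires no smallness assumption on the learning rate $\alpha$: the increment bound is an exact convexity inequality rather than a Taylor remainder estimate.
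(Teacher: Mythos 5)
Your proof is correct, and while it shares the paper's skeleton (establish convexity of $H$ as a function of the underlying parameters $(\pmb{\theta},\pmb{\phi},\lambda,\mu)$ in the sigmoid case, then convert continuous-time invariance into discrete-time monotonicity), the two halves are each executed differently. For convexity, the paper's Lemma \ref{lemma:h-convexity} differentiates the composite integral term twice and checks that the numerator $f^2-2p_if+p_i$ has no real roots for $p_i\in(0,1)$; your substitution $z=\sigma(y)$ instead produces the explicit potential $(1-2p_i)\theta_i+p_ie^{-\theta_i}+(1-p_i)e^{\theta_i}$, whose second derivative $p_ie^{-\theta_i}+(1-p_i)e^{\theta_i}>0$ makes strict separable convexity manifest -- equivalent content, but the closed form is arguably more informative. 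For monotonicity, the paper invokes Theorem \ref{restated-theorem:increasingenergy} (Theorem 5.3 of \cite{hamiltonianes}), whose proof is a supporting-hyperplane argument needing only continuity of $H$ and convexity of its sublevel sets; you replace this with the two-line first-order inequality $H(x^{k+1})\ge H(x^k)+\nabla H(x^k)^{\top}(x^{k+1}-x^k)$ together with the pointwise identity $\nabla H(x)^{\top}V(x)=0$, where $V$ is the vector field of Equation \ref{eq:eq_gda_multi} and the DGDA step is exactly $x^{k+1}-x^k=\alpha V(x^k)$. Your route is more elementary and self-contained but uses differentiability of $H$ (available here); the paper's cited lemma is strictly more general, surviving merely quasiconvex or nonsmooth invariants. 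One point you handle implicitly but should state: you need $\nabla H^{\top}V=0$ at \emph{every} point, not just along one trajectory -- this holds because for sigmoids $X_{\pmb{\theta}_i(0)}=f_i^{-1}$ is initialization-independent, every initialization is safe, and the cancellation in the proof of Theorem \ref{theorem:invariant_multi} is algebraic and pointwise (the paper makes the same observation in its own proof). Two cosmetic remarks: your $v$ for the game value clashes with the paper's use of $v$ in Section \ref{section:two-by-two} for $u_{0,0}-u_{0,1}-u_{1,0}+u_{1,1}$, so rename it; and the explicit evaluation $\lambda^{*}=\mu^{*}=-v$, while correct by the KKT conditions at a fully mixed equilibrium, is unnecessary -- existence of $\lambda^*,\mu^*$ already suffices.
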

An immediate consequence of the above theorem is that the discretized system cannot converge to the equlibrium $(\pmb{p},\pmb{q})$ if its not initialized there. For the case of non-safe initializations, the conclusions of 
Theorem  \ref{theorem:spurious}
persist in this case as well. 
\begin{theorem} \label{theorem:spurious-discrete}
One can choose a learning rate $\alpha$ and functions $f$ and $g$ for the discretized version of the system of Equation \ref{eq:eq_gda} so that for a positive measure set of initial conditions the trajectories converge to fixed points that do not correspond to equilibria of the hidden game.
\end{theorem}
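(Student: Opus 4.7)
The plan is to reuse the construction from \theoref{theorem:spurious} and promote the continuous-time attracting spurious fixed point to an attracting hyperbolic fixed point of the forward-Euler map $T_\alpha$, provided the learning rate $\alpha$ is chosen small enough. Concretely, I would take the same pair of functions $f,g$ produced for \theoref{theorem:spurious}, each of which has a strict, non-degenerate local maximum at points $\pmb{\theta}_*,\pmb{\phi}_*$ with $\nabla f(\pmb{\theta}_*)=\nabla g(\pmb{\phi}_*)=0$ and negative-definite Hessians. The values $f(\pmb{\theta}_*), g(\pmb{\phi}_*)$ are picked so that (i) at least one of them differs from the Nash component ($p$ or $q$), which makes $(\pmb{\theta}_*,\pmb{\phi}_*)$ a non-Nash fixed point of the dynamics, and (ii) the products $v(g(\pmb{\phi}_*)-q)>0$ and $v(f(\pmb{\theta}_*)-p)>0$ have the same sign condition already used to guarantee continuous-time stability.

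Next I would linearize the discrete update
\[
T_\alpha(\pmb{\theta},\pmb{\phi}) \;=\; \bigl(\pmb{\theta}+\alpha v(g(\pmb{\phi})-q)\nabla f(\pmb{\theta}),\; \pmb{\phi}+\alpha v(f(\pmb{\theta})-p)\nabla g(\pmb{\phi})\bigr)
\]
at $(\pmb{\theta}_*,\pmb{\phi}_*)$. Since $\nabla f$ and $\nabla g$ vanish there, every mixed block involving the outer product $\nabla f\,\nabla g^\top$ disappears, so the Jacobian collapses to
\[
J_\alpha \;=\; \begin{pmatrix} I_n+\alpha v(g(\pmb{\phi}_*)-q)\,\nabla^2 f(\pmb{\theta}_*) & 0 \\ 0 & I_m+\alpha v(f(\pmb{\theta}_*)-p)\,\nabla^2 g(\pmb{\phi}_*)\end{pmatrix}.
\]
Each diagonal block has the shape $I+\alpha N$ with $N$ symmetric negative definite, so every eigenvalue equals $1+\alpha\mu$ with $\mu<0$. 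Choosing $\alpha$ strictly smaller than $2/\max\bigl(|v(g(\pmb{\phi}_*)-q)|\,\norm{\nabla^2 f(\pmb{\theta}_*)},\,|v(f(\pmb{\theta}_*)-p)|\,\norm{\nabla^2 g(\pmb{\phi}_*)}\bigr)$ places all eigenvalues of $J_\alpha$ strictly inside $(-1,1)$, so $(\pmb{\theta}_*,\pmb{\phi}_*)$ becomes a hyperbolic sink of $T_\alpha$.

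Finally, I would invoke the discrete Hartman--Grobman / local stable manifold theorem at a hyperbolic sink to produce an open neighborhood $U\ni(\pmb{\theta}_*,\pmb{\phi}_*)$ on which every orbit of $T_\alpha$ converges to the sink. $U$ has positive Lebesgue measure and the sink is, by construction, not a Nash equilibrium of the hidden game, which yields exactly what the theorem claims. The main obstacle I anticipate is the bookkeeping when passing from the continuous to the discrete argument: one has to verify that the Hessians from the \theoref{theorem:spurious} construction are genuinely non-degenerate (so that the hyperbolicity condition $|\lambda|\neq 1$ is strict and Hartman--Grobman applies) and that a single learning rate $\alpha$ can be chosen uniformly for both diagonal blocks without disturbing the sign conditions in (ii); both issues are settled by taking $f$ and $g$ to be, for instance, shifted scaled quadratic bumps near $\pmb{\theta}_*,\pmb{\phi}_*$, as in the underlying continuous construction.
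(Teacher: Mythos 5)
Your overall strategy is exactly the paper's: reuse the continuous-time construction of \theoref{theorem:spurious}, observe that the Jacobian of the Euler map is $I + \alpha\,\mathrm{D}\pmb{Y}$, choose $\alpha$ small enough that its spectrum lies strictly inside the unit circle, and conclude positive measure from a full-dimensional stable manifold at the attracting non-Nash fixed point. However, there is a genuine sign error that breaks your specific construction. Under the paper's dynamics (\eqref{eq:eq_gda} and DGDA) the minimizing player \emph{descends}, so the update is $\pmb{\theta}_{k+1} = \pmb{\theta}_k - \alpha v\,(g(\pmb{\phi}_k)-q)\,\nabla f(\pmb{\theta}_k)$, whereas your $T_\alpha$ carries a $+$ in the first component. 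With the correct sign, the $\pmb{\theta}$-block of the Jacobian at $(\pmb{\theta}_*,\pmb{\phi}_*)$ is $I_n - \alpha v\,(g(\pmb{\phi}_*)-q)\,\nabla^2 f(\pmb{\theta}_*)$; under your condition (ii) (namely $v(g(\pmb{\phi}_*)-q)>0$) together with your choice of a local \emph{maximum} of $f$ (so $\nabla^2 f(\pmb{\theta}_*)\prec 0$), this block equals $I_n + \alpha P$ with $P$ positive definite. Its eigenvalues exceed $1$, so the fixed point is a saddle, not a sink; its stable set is an $m$-dimensional manifold in $\mathbb{R}^{n+m}$, hence of measure zero, and the theorem's positive-measure conclusion does not follow from your construction as stated. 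Note also that the paper's continuous construction in \theoref{theorem:spurious} uses local \emph{minima} (positive definite Hessians) precisely because of this sign, so your claim to be mirroring it is inaccurate.

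The repair is immediate: take $\pmb{\theta}_*,\pmb{\phi}_*$ to be local minima of $f$ and $g$ with $f(\pmb{\theta}_*)<p$ and $g(\pmb{\phi}_*)>q$ (for $v>0$), so that $\mathrm{D}\pmb{Y}(\pmb{\theta}_*,\pmb{\phi}_*)$ has only negative eigenvalues, and then $I+\alpha\,\mathrm{D}\pmb{Y}$ is a contraction for small $\alpha$; equivalently, keep your local maxima but flip both inequalities in (ii). Two smaller points. First, your threshold $\alpha < 2/\max(\cdot)$ permits an eigenvalue of $I+\alpha\,\mathrm{D}\pmb{Y}$ to equal $0$ (at $\alpha = 1/\abs{\mu}$ for some eigenvalue $\mu$), in which case the map is not a local diffeomorphism and Hartman--Grobman or the discrete stable manifold theorem (\theoref{theorem:smt-discrete}) does not apply as stated; the paper's choice $\alpha < -1/\lambda_{\min}$ places all eigenvalues in $(0,1)$ and avoids this (alternatively, spectral radius strictly below $1$ already yields an open attracting basin via a contraction argument in an adapted norm, with no invertibility needed, so you could bypass the manifold theorem entirely at a sink). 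Second, your condition (ii) with strict inequalities forces \emph{both} $f(\pmb{\theta}_*)\neq p$ and $g(\pmb{\phi}_*)\neq q$, which is in any case needed for the diagonal blocks to be nondegenerate (hyperbolic), so the weaker statement that ``at least one of them differs'' undersells what the argument actually requires.
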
     \section{Conclusion}\label{section:conclusion}
In this work, inspired broadly by the structure of the complex competition between generators and discriminators in GANs, we defined a broad class of non-convex non-concave min max optimization games, which we call hidden bilinear zero-sum games. In this setting, we showed that gradient-descent-ascent behavior is considerably more complex than a straightforward convergence to the min-max solution that one might at first suspect. We showed that the trajectories even for the simplest but evocative 2x2 game exhibits cycles. In higher dimensional games, the induced dynamical system could exhibit even more complex behavior like Poincare recurrence. On the other hand, we explored safety conditions whose violation may result in convergence to spurious game-theoretically meaningless equilibria. Finally, we show that even for a simple but widespread family of functions like sigmoids discretizing gradient-descent-ascent can further intensify the disequilibrium phenomena resulting in divergence away from equilibrium.

As a consequence of this work numerous open problems emerge; Firstly, extending such recurrence results to more general families of functions, as well as examining possible generalizations to multi-player network zero-sum games are fascinating questions. Recently, there has been some progress in resolving cyclic behavior in simpler settings by employing different training algorithms/dynamics
(e.g., \cite{daskalakis2017training, mertikopoulos2019optimistic, Momentum}). It would be interesting to examine if these algorithms could enhance equilibration in our setting as well.  Additionally, the proposed safety conditions shows that a major source of spurious equilibria in GANs could be the bad local optima of the individual neural networks of the discriminator and the generator. Lessons learned from overparametrized neural network architectures that converge to global optima \cite{du2018} could lead to improved efficiency in training GANs. Finally, analyzing different simplification/models of GANs where provable convergence is possible could lead to  interesting comparisons as well as to the emergence of theoretically tractable hybrid models that capture both the hardness of GAN training (e.g. non-convergence, cycling, spurious equilibria, mode collapse, etc) as well as their power.    
     \section*{Acknowledgements}\label{section:acknowledgements}
Georgios Piliouras acknowledges MOE AcRF Tier 2 Grant 2016-T2-1-170, grant PIE-SGP-AI-2018-01 and NRF 2018 Fellowship NRF-NRFF2018-07. Emmanouil-Vasileios Vlatakis-Gkaragkounis was supported by NSF CCF-1563155, NSF CCF-1814873,  NSF CCF-1703925, NSF CCF-1763970. Finally this work was supported by the Onassis Foundation - Scholarship ID: F ZN 010-1/2017-2018. 

\bibliography{bibliography/references,bibliography/refer,bibliography/manolis_ref,bibliography/theorems,bibliography/refer2}
\bibliographystyle{plainnat}

\clearpage
\appendix
\vbox{\hsize\textwidth
		\linewidth\hsize
		\vskip 0.1in
		\hrule height 4pt
		\vskip 0.25in
		\centering
		{\LARGE\bf Poincar\'{e} Recurrence, Cycles and Spurious Equilibria in Gradient-Descent-Ascent for
Non-Convex Non-Concave Zero-Sum Games\\ \vspace{.1in}\large Supplementary Material}
		\vskip 0.29in
		\hrule height 1pt
		
	}
	
\section{Background in dynamical systems}

\subsection{Poincar\'{e}-Bendixson Theorem}
The Poincar\'{e}-Bendixson theorem is a powerful theorem that implies that two-dimensional systems cannot  exhibit chaos. Effectively, the limit behavior is either going to be an equilibrium, a periodic orbit, or a closed loop, punctuated by one (or more) fixed points. Formally, we have:
\begin{theorem}[Poincar\'{e}-Bendixson Theorem \cite{bendixson1901}]
Given a differentiable real dynamical system defined on an open subset of the plane, then every non-empty compact $\omega$-limit set of an orbit, which contains only finitely many fixed points, is either a fixed point, a periodic orbit, or a connected set composed of a finite number of fixed points together with homoclinic and heteroclinic orbits connecting these.
\end{theorem}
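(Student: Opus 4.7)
The plan is to follow the classical proof via transverse sections and the Jordan Curve Theorem, which is where the restriction to the plane becomes essential.

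First I would set up the geometric machinery. At every non-fixed point $p$ introduce a local transverse section $\Sigma$: a small smooth arc through $p$ transverse to the vector field. By the flow-box theorem, in a neighborhood of $p$ the flow is conjugate to parallel translation, so every nearby orbit crosses $\Sigma$ exactly once and in the same direction. The main structural tool is then the monotonicity lemma: if an orbit $\gamma$ meets a transverse arc $\Sigma$ at successive forward times $t_1 < t_2 < \cdots$, the intersection points are monotonically ordered along $\Sigma$. The proof closes the piece of orbit from $\gamma(t_1)$ to $\gamma(t_2)$ with the subarc of $\Sigma$ between them into a Jordan curve; planarity and the fact that the flow crosses $\Sigma$ in a fixed direction trap the orbit on one side, forcing subsequent crossings to move consistently along $\Sigma$. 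An immediate corollary is that any point of the $\omega$-limit set $\omega(x)$ can meet a given transverse arc at most once.

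Next, I would run the main case analysis on $\omega(x)$ under the hypothesis that $\omega(x)$ is a non-empty compact set with only finitely many fixed points. Take $y \in \omega(x)$; since $\omega(x)$ is forward-invariant and compact, $\omega(y) \subseteq \omega(x)$ is non-empty. Suppose $\omega(x)$ contains a regular (non-fixed) point $y$ with $\omega(y)$ also containing a regular point $z$; draw a transverse section $\Sigma_z$ through $z$. The orbit of $y$ accumulates at $z$ and so crosses $\Sigma_z$ at times tending to infinity, but by the corollary these crossings all coincide, forcing $\gamma(y)$ to be a periodic orbit $C$. A second transverse-section argument at any $w \in C$ then shows $\gamma(x)$ spirals onto $C$ with crossings monotonically approaching $w$, and $\omega(x) \supseteq C$ together with the finiteness of fixed points in $\omega(x)$ forces $\omega(x) = C$ using the connectedness of $\omega$-limit sets of bounded orbits.

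To handle the remaining cases: if $\omega(x)$ consists entirely of fixed points, then connectedness plus finiteness forces $\omega(x)$ to be a single fixed point. If $\omega(x)$ mixes fixed points and regular orbits, pick any regular $y \in \omega(x)$; the analysis above, applied to $\omega(y)$, shows that $\omega(y)$ cannot contain a regular point (otherwise $\omega(x)$ would reduce to a periodic orbit and contain no fixed point), so $\omega(y)$ is a single fixed point. The same argument applied to $\alpha(y)$ places $\gamma(y)$ as a homoclinic or heteroclinic orbit joining fixed points of $\omega(x)$, yielding the stated structural description. The main obstacle is executing the planar geometric step carefully: the monotonicity lemma is intuitively obvious but hinges on a delicate use of the Jordan Curve Theorem and on precise bookkeeping of which side of the constructed loop the flow enters, and one must be meticulous when passing from accumulation at $z$ to actual transverse crossings of $\Sigma_z$, since only the latter are controlled by the corollary. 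Once the monotonicity lemma and connectedness of $\omega$-limit sets are in hand, the rest is structural book-keeping.
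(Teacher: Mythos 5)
The paper does not prove this statement at all: it is the classical Poincar\'{e}--Bendixson theorem, imported verbatim into the appendix as background with a citation to Bendixson's 1901 paper, and used as a black box in the proof of Theorem \ref{theorem:orbit}. Your outline is the standard textbook proof of that classical result --- flow boxes, monotonicity of successive crossings of a transverse arc via the Jordan Curve Theorem, the corollary that $\omega(x)$ meets a transversal in at most one point, periodicity of $\gamma(y)$ when a regular $y \in \omega(x)$ has a regular point in $\omega(y)$, and the three-way case analysis --- and it is essentially correct, including the two delicate points you flag yourself (the Jordan-curve bookkeeping in the monotonicity lemma, and converting accumulation at $z$ into genuine transverse crossings via the flow box). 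One small imprecision worth fixing: you invoke ``connectedness of $\omega$-limit sets of bounded orbits,'' whereas the theorem's hypothesis is compactness of $\omega(x)$ itself; fortunately a non-empty compact $\omega$-limit set is automatically connected (if it split into two disjoint compact pieces, the orbit would have to cross the gap between them infinitely often, producing limit points in the gap), so your argument goes through after that substitution, and the same remark covers your use of connectedness of $\omega(y) \subseteq \omega(x)$ in the mixed case.
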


\subsection{Liouville’s formula and Poincar\'{e} recurrence}
In order to study the flows of dynamical systems in higher dimensions, one needs to understand more about the behaviour of the flow $\Phi$ both in time and space. An important property is the evolution of the volume of $\Phi$ over time:
\begin{theorem} [Liouville’s formula]
Let $\Phi$ be the flow of a dynamical system with vecor field $f$. Given any measurable set $A$, let $A(t) =\Phi(A, t)$ and its volume be $\mathrm{vol}[A(t)] = \int_{A(t)} \mathrm{d} \pmb{x}$. Then we have that
\begin{equation*}
    \frac{d \mathrm{vol}[A(t)]}{\mathrm{d}t} = \int_{A(t)} \mathrm{div}[f(\pmb{x})] \mathrm{d}\pmb{x}
\end{equation*}
\end{theorem}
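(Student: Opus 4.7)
The plan is to prove Liouville's formula by the change of variables formula followed by Jacobi's formula for the time derivative of a determinant. The key object is the Jacobian of the flow with respect to initial conditions, $J(\pmb{x},t) := D_{\pmb{x}} \Phi(\pmb{x},t)$, which describes how an infinitesimal volume element centered at $\pmb{x}$ is stretched when transported along the flow up to time $t$. Since $\Phi$ is $C^1$ (assuming $f$ is at least $C^1$, which is standard in this setting), we can write
\begin{equation*}
\mathrm{vol}[A(t)] = \int_{A(t)} \mathrm{d}\pmb{y} = \int_{A} \lvert \det J(\pmb{x},t) \rvert \, \mathrm{d}\pmb{x},
\end{equation*}
pulling the integration domain back to the fixed set $A$ so that the $t$-dependence is captured entirely inside the integrand.

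Next, I would differentiate under the integral sign (justified by uniform continuity of $J$ on compact sets and a standard dominated convergence argument, which I will not belabor). This reduces the problem to computing $\frac{d}{dt}\det J(\pmb{x},t)$. Differentiating the flow identity $\dot{\Phi}(\pmb{x},t)=f(\Phi(\pmb{x},t))$ with respect to $\pmb{x}$ and interchanging partials yields the first variational equation
\begin{equation*}
\dot{J}(\pmb{x},t) = Df(\Phi(\pmb{x},t))\,J(\pmb{x},t),\qquad J(\pmb{x},0) = I.
\end{equation*}
Applying Jacobi's formula $\frac{d}{dt}\det M(t) = \det(M(t))\,\mathrm{tr}(M(t)^{-1}\dot{M}(t))$ (valid where $M$ is invertible, which holds here because $J(\pmb{x},0)=I$ and the flow is a diffeomorphism on its domain of definition, so $\det J$ stays positive) gives
\begin{equation*}
\frac{d}{dt}\det J(\pmb{x},t) = \det J(\pmb{x},t)\cdot \mathrm{tr}\bigl(Df(\Phi(\pmb{x},t))\bigr) = \det J(\pmb{x},t)\cdot \mathrm{div}[f(\Phi(\pmb{x},t))].
\end{equation*}

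To conclude, since $\det J(\pmb{x},t)>0$ the absolute value can be dropped, so
\begin{equation*}
\frac{d\,\mathrm{vol}[A(t)]}{\mathrm{d}t} = \int_{A} \det J(\pmb{x},t)\,\mathrm{div}[f(\Phi(\pmb{x},t))] \, \mathrm{d}\pmb{x}.
\end{equation*}
Changing variables back via $\pmb{y}=\Phi(\pmb{x},t)$, with $\mathrm{d}\pmb{y} = \det J(\pmb{x},t)\,\mathrm{d}\pmb{x}$ and $A(t)=\Phi(A,t)$, yields the claimed identity
\begin{equation*}
\frac{d\,\mathrm{vol}[A(t)]}{\mathrm{d}t} = \int_{A(t)} \mathrm{div}[f(\pmb{y})]\,\mathrm{d}\pmb{y}.
\end{equation*}

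The only genuinely delicate steps are (i) justifying differentiation under the integral (routine under the regularity implicit in the statement, since $f$ Lipschitz guarantees $\Phi\in C^1$ in both arguments when $f\in C^1$) and (ii) verifying positivity of $\det J$, which follows from continuity in $t$, the initial condition $\det J(\pmb{x},0)=1$, and the fact that $\Phi(\cdot,t)$ is a diffeomorphism so $\det J$ never vanishes. Beyond these technicalities, the argument is a clean application of the first variational equation plus Jacobi's formula and is the main obstacle only insofar as one must keep the two changes of variables straight.
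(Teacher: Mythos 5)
Your proof is correct; note that the paper states Liouville's formula purely as classical background and supplies no proof of its own, so there is nothing to diverge from, and your argument --- pulling the volume integral back to the fixed domain by the change of variables, deriving the first variational equation $\dot J(\pmb{x},t) = Df(\Phi(\pmb{x},t))\,J(\pmb{x},t)$ with $J(\pmb{x},0)=I$, and applying Jacobi's formula to obtain $\frac{d}{dt}\det J = \det J\cdot \mathrm{div}[f(\Phi)]$ before changing variables back --- is the standard textbook derivation. One small simplification: positivity of $\det J$ need not appeal to the flow being a diffeomorphism (which has a slightly circular flavor here), since the scalar ODE you derived integrates to $\det J(\pmb{x},t)=\exp\left(\int_0^t \mathrm{div}[f(\Phi(\pmb{x},s))]\,\mathrm{d}s\right)>0$, which simultaneously handles the invertibility hypothesis in Jacobi's formula.
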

An interesting class of dynamical systems are those whose vector fields have zero divergence everywhere.  Liouville’s formula trivially implies that the volume of the flow is preserved in such systems. This is an important tool for proving that a flow of a dynamical system is Poincar\'{e} recurrent.

\begin{theorem}[Poincar\'{e} Recurrence Theorem (version 1) \citep{Poincare1890}]{Let} $(X,\Sigma,\mu)$
be a finite measure space and let 
$f\colon X\to X$ be a measure-preserving transformation. Then,
for any $E\in \Sigma$, the set of those points $x$ of $E$ such that $f^n(x)\notin E$ for all $n>0$ has zero measure. That is, almost every point of $E$ returns to $E$. In fact, almost every point returns infinitely often.~Namely,
$$\measure{\{x\in E:\exists N \mbox{ such that } 
f^n(x)\notin E \mbox{ for all } n>N\}}=0.$$
\end{theorem}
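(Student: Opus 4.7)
The plan is to prove the stronger ``infinitely often'' statement directly (the displayed equation at the end); the first sentence then falls out by specializing to $N=0$. The conceptual core is a short pigeonhole argument dressed in measure theory: if $f$ preserves a finite measure, then a nested decreasing family of measurable sets whose members all have the same measure must coincide with their intersection almost everywhere, and interpreting that intersection gives recurrence at arbitrarily large times.

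Concretely, for each integer $N \ge 0$ I would introduce
$$E_N \;:=\; \bigcup_{m \ge N} f^{-m}(E),$$
i.e., the set of points of $X$ whose orbit enters $E$ at some time $\ge N$. Two observations drive the argument. First, the family is nested decreasingly, $E_0 \supseteq E_1 \supseteq \cdots$. Second, $E_N = f^{-N}(E_0)$, by the set identity $f^{-N}\bigl(\bigcup_{m \ge 0} f^{-m}(E)\bigr) = \bigcup_{m \ge 0} f^{-(N+m)}(E)$. Measure-preservation therefore yields $\mu(E_N) = \mu(E_0)$ for every $N$.

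Using finiteness of $\mu$, I may now subtract: $\mu(E_0 \setminus E_N) = \mu(E_0) - \mu(E_N) = 0$ for each $N$. By countable subadditivity,
$$\mu\Bigl(E_0 \setminus \bigcap_{N \ge 0} E_N\Bigr) \;=\; \mu\Bigl(\bigcup_{N \ge 0}(E_0 \setminus E_N)\Bigr) \;\le\; \sum_{N \ge 0} \mu(E_0 \setminus E_N) \;=\; 0,$$
so $\mu(\bigcap_N E_N) = \mu(E_0)$. Unpacking the definition, a point $x$ lies in $\bigcap_N E_N$ iff for every $N$ there exists some $m \ge N$ with $f^m(x) \in E$, equivalently the return-time set $\{n \ge 1 : f^n(x) \in E\}$ is unbounded and hence infinite. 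Since $E \subseteq E_0$, the non-recurrent subset of $E$ -- namely $\{x \in E : \exists N\text{ with } f^n(x) \notin E \text{ for all } n > N\}$ -- is contained in $E_0 \setminus \bigcap_N E_N$, and therefore has measure zero, yielding the displayed equation.

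Main obstacle: none of substance. The theorem is classical and the argument amounts to pigeonhole, but the one place that genuinely requires the hypothesis is the subtraction $\mu(E_0) - \mu(E_N) = 0$, which fails catastrophically if $\mu(X) = \infty$ (e.g., translation on $\mathbb{R}$ with Lebesgue measure admits no recurrent points at all), so finiteness is essential. The only other ingredients -- countable additivity and the identity $E_N = f^{-N}(E_0)$ -- are routine once the right nested family has been written down.
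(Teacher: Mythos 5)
Your proof is correct, and there is nothing in the paper to compare it against: the paper states this theorem as a classical result, citing Poincar\'{e}, and never proves it --- the only argument the paper supplies in this subsection is the subsequent derivation of the dynamical-systems version (Theorem \ref{theorem:poincar- Recurrence}) \emph{from} version 1, via a countable basis of open sets. Your argument is the standard measure-theoretic proof and it is airtight: the identity $E_N = f^{-N}(E_0)$ combined with measure preservation gives $\mu(E_N)=\mu(E_0)$, finiteness of $\mu$ legitimizes the subtraction $\mu(E_0\setminus E_N)=\mu(E_0)-\mu(E_N)=0$, and countable subadditivity transfers this to the intersection; the exceptional set in the theorem statement is exactly $\bigcup_{N\ge 0}\bigl(E\setminus E_{N+1}\bigr)$, which is both measurable (a countable union of measurable sets, since measure-preserving maps are measurable by definition) and contained in $E_0\setminus\bigcap_N E_N$, so it is null --- you could make this one-line measurability remark explicit, but it is implicit in your construction. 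Two further points in your favor: proving the ``infinitely often'' form directly and recovering the never-returning case as the $N=0$ slice is the clean order of deduction, and your diagnosis of where finiteness is indispensable (translation on $\mathbb{R}$ with Lebesgue measure has no recurrence) correctly identifies the only hypothesis that does real work beyond measure preservation. What your write-up buys that the paper does not attempt is self-containedness; what the paper's treatment buys is brevity, since it only ever invokes the theorem as a black box inside the proof of Theorem \ref{restated-theorem:differomorphism-poincare-recurrence}.
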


\cite{Poincare1890} proved that in certain systems almost all trajectories return arbitrarily close to their initial position infinitely often. Indeed, let $f\colon X\to X$ be a measure-preserving transformation, $\{U_n:n\in \mathbb{N}\}$ be a basis of open sets for the bounded subset $X\subset \mathbb{R}^d$, and for each $n$ define $U_{n^\prime}=\{x\in U_n:\forall n\ge 1, f_n(x) \not\in U_n\}$. Notice that such basis exists since $\mathbb{R}^n$ is a second-countable Hausdorff space. From the initial theorem  we know that $\measure{U_{n^\prime}}=0$. Let $\mathcal{U}=\cup_{n\in \mathbb{N}} U_{n^\prime}$. Then $\measure{\mathcal{U}}=0$. We assert that if $x\in X\setminus \mathcal{U}$ then $x$ is recurrent. In fact, given a neighborhood $U$ of $x$, there is a basic neighborhood $U_n$ such that $\{x\}\subset U_n \subset U$, and since $x\not \in \mathcal{U}$ we have that $x\in U_n\setminus U_{n^\prime}$ which by definition of $U_{n^\prime}$ means that there exists $n\ge 1$ such that $f_n(x)\in U_{n}\subset U$. Thus $x$ is recurrent. Therefore, for the rest of the paper, we will use the following version which is common in dynamical systems nomenclature.

\begin{theorem}[Poincar\'{e} Recurrence Theorem (dynamical system version)]\label{theorem:poincar- Recurrence}\cite{poincarerecurrence}
If a flow $\Phi:\mathbb{R}^n\times \mathbb{R}\to \mathbb{R}^n$ preserves volume and has only orbits on a bounded subset $D$ of $\mathbb{R}^n$ then almost each point in $D$ is recurrent, i.e for every open neighborhood $U$ of $x$ there exists an increasing sequence of times $t_n$ such that $\displaystyle\lim_{n\to\infty} t_n = \infty$ and $ \Phi(\mathbf{x},t_n)\in U $ for all $n$.
\end{theorem}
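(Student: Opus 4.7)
The plan is to strengthen Theorem~\ref{theorem:differomorphism-poincare-recurrence} from a trajectory-wise diffeomorphism statement into a single \emph{global} change of coordinates of the ambient phase space under which the flow is divergence-free and the invariant $H$ of Theorem~\ref{theorem:invariant_multi} has compact level sets; once both are in place, the Poincar\'{e} Recurrence Theorem (Theorem~\ref{theorem:poincar- Recurrence}) applies directly to the transformed flow and transports back to Equation~\ref{eq:eq_gda_multi} via a measure-preserving bijection of null sets.

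First I would dispatch safety for every initialization. Because $\sigma'(x)=\sigma(x)(1-\sigma(x))>0$ on all of $\mathbb{R}$, no coordinate $\pmb{\theta}_i(0),\pmb{\phi}_j(0)$ is a stationary point of $f_i,g_j$; and because $\sigma(\mathbb{R})=(0,1)$ the gradient-ascent orbit through any initialization sweeps out the entire interval $(0,1)$, which contains every equilibrium coordinate $p_i,q_j\in(0,1)$. Hence $p_i\in f_{i,\pmb{\theta}_i(0)}$ and $q_j\in g_{j,\pmb{\phi}_j(0)}$ on all of $\mathbb{R}^{N+M+2}$, so Theorem~\ref{theorem:invariant_multi} and the hypotheses of Theorem~\ref{theorem:differomorphism-poincare-recurrence} hold pointwise. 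Next I would construct the change of variables: set $\tilde F_i$ by $d\tilde F_i/df_i=1/\norm{\nabla f_i(X_{\pmb{\theta}_i(0)}(f_i))}^2$ and analogously $\tilde G_j$, keeping $\lambda,\mu$ unchanged. For sigmoids, $\norm{\nabla f_i}^2=(f_i(1-f_i))^2$, and the antiderivative of $1/(z(1-z))^2$ (by elementary partial fractions $z^{-2}+2z^{-1}+2(1-z)^{-1}+(1-z)^{-2}$) diverges to $-\infty$ at $z=0$ and $+\infty$ at $z=1$. Composed with $\theta_i\mapsto\sigma(\theta_i)$, this gives a $C^\infty$ bijection $\mathbb{R}\to\mathbb{R}$; stacking these yields a single global diffeomorphism of $\mathbb{R}^{N+M+2}$. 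The push-forward of Equation~\ref{eq:f_and_g_mult} then reads
\[
\dot{\tilde F_i}=-\Bigl(\textstyle\sum_{j}u_{ij}g_j+\lambda\Bigr),\quad \dot{\tilde G_j}=+\Bigl(\textstyle\sum_{i}u_{ij}f_i+\mu\Bigr),\quad \dot\lambda=\textstyle\sum_i f_i-1,\quad \dot\mu=-\Bigl(\textstyle\sum_j g_j-1\Bigr),
\]
where each right-hand side depends only on coordinates \emph{other} than the one being differentiated, so the divergence of the vector field vanishes identically and Liouville's formula yields volume preservation.

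Finally I would verify coercivity of $H$ and conclude. In the new coordinates the $i$-th summand of $H$ reads $\int_{p_i}^{f_i}(z-p_i)/(z(1-z))^2\,dz$; since $(z-p_i)/z^2\sim-p_i/z^2$ near $z=0$ and $(z-p_i)/(1-z)^2\sim(1-p_i)/(1-z)^2$ near $z=1$ are non-integrable, this term tends to $+\infty$ whenever $|\tilde F_i|\to\infty$, i.e.\ whenever $f_i$ approaches $\{0,1\}$; the $\tilde G_j$ terms are symmetric, and the $\lambda,\mu$ summands are quadratic. Hence every level set $\{H=c\}$ is a compact subset of $\mathbb{R}^{N+M+2}$, every orbit is confined to such a compact set by conservation, and Theorem~\ref{theorem:poincar- Recurrence} delivers Poincar\'{e} recurrence of the transformed flow. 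Because a $C^\infty$ diffeomorphism maps Lebesgue null sets to Lebesgue null sets, pulling back gives Poincar\'{e} recurrence for Equation~\ref{eq:eq_gda_multi}. The same three-step skeleton handles the stated generalization to one-to-one $f_i,g_j$ with all-safe initializations, \emph{provided} the coercivity of $H$ on phase space can be re-checked; indeed, this coercivity is the main obstacle and the place where the sigmoid computation is special, since for general one-to-one $f_i$ one must show that $\norm{\nabla f_i(X_{\pmb{\theta}_i(0)}(\cdot))}^{-2}$ has a sufficiently strong singularity at the endpoints of $f_{i,\pmb{\theta}_i(0)}$ to dominate the linear factor $(z-p_i)$ and drive the invariant to infinity, which the double pole $1/(z(1-z))^2$ supplies for free in the sigmoid case.
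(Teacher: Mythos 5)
Your proposal does not prove the statement it was asked to prove. The statement here is the abstract Poincar\'{e} Recurrence Theorem itself (a volume-preserving flow all of whose orbits lie in a bounded set $D\subset\mathbb{R}^n$ has almost every point of $D$ recurrent), and your argument invokes exactly this theorem as a black box (``Theorem~\ref{theorem:poincar- Recurrence} delivers Poincar\'{e} recurrence of the transformed flow''). What you have actually written is a proof sketch of Theorem~\ref{theorem:sigmoid-poincare-recurrence-behavior} --- the application to the hidden-game dynamics with sigmoid activations --- via a global change of coordinates, a divergence-free pushed-forward vector field, and coercivity of the invariant $H$. That material closely parallels the paper's proofs of Theorems~\ref{theorem:differomorphism-poincare-recurrence} and~\ref{theorem:sigmoid-poincare-recurrence-behavior}, but relative to the assigned statement it is circular: none of it explains why volume preservation plus bounded orbits forces recurrence in the first place.

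The paper's own derivation of the statement is purely measure-theoretic and short. Since $D$ is bounded it has finite Lebesgue measure, orbit-invariance of $D$ together with volume preservation makes the time-one map $T=\Phi(\cdot,1)$ a measure-preserving transformation of a finite measure space, and the classical version (Poincar\'{e} Recurrence, version 1) then says that for any measurable $E$, almost every point of $E$ returns to $E$ infinitely often. Upgrading ``returns to $E$'' to ``recurrent'' uses second countability of $\mathbb{R}^n$: fix a countable basis $\{U_n\}$ of open sets for $D$, let $U_{n'}\subseteq U_n$ be the null set of points that never return to $U_n$, and set $\mathcal{U}=\bigcup_n U_{n'}$, still a null set. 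For $x\notin\mathcal{U}$ and any neighborhood $U\ni x$, pick a basic $U_n$ with $x\in U_n\subseteq U$; since $x\notin U_{n'}$, the orbit re-enters $U_n\subseteq U$, and iterating produces an increasing unbounded sequence of times $t_n$ with $\Phi(x,t_n)\in U$. This countable-basis and null-union step is the entire content of the statement and is completely absent from your proposal; to repair it you would need to supply this argument (or an equivalent reduction to the measure-theoretic version) rather than cite the very theorem being proved.
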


\subsection{Additional Definitions}
\begin{definition}[Differomorphism, \cite{Perko}]
Let $U, V$ be manifolds. A map $f : U \rightarrow V$ is called a diffeomorphism if $f$ carries $U$ onto $V$ and also both $f$ and $f^{-1}$ are smooth.
\end{definition}

\begin{definition}[Topological conjugacy, \cite{Perko}]
Two flows $\Phi_t : A \to A$ and $\Psi_t : B \to B$ are conjugate if there exists a homeomorphism $g : A \to B$ such that
\begin{equation*}
    \forall \pmb{x} \in A, t \in \mathbb{R}: g(\Phi_t(\pmb{x})) = \Psi_t(g(\pmb{x}))
\end{equation*}
Furthermore, two flows $\Phi_t: A \to A$ and 
$\Psi_t: B \to B$ are diffeomorphic if there exists a diffeomorphism $g : A \to B $ such that 
\begin{equation*}
    \forall \pmb{x} \in A, t \in \mathbb{R}:g(\Phi_t(\pmb{x})) = \Psi_t(g(\pmb{x})).
\end{equation*}
If two flows are diffeomorphic, then their vector fields are related by the derivative of the conjugacy. That is, we get precisely the same result
that we would have obtained if we simply transformed the coordinates in their differential equations
\end{definition}

\begin{definition}[$(\alpha,\omega)$-limit set, \cite{Perko}]
Let $\Phi(\pmb{x}_0, \cdot)$ be the flow of an autonomous dynamical system $\dot{pmb{x}}=f(\pmb{x})$. Then 
\[
\begin{aligned}
\omega(\pmb{x_0}) &=& \{\pmb{x}\text{ : for all $T$ and all $\epsilon > 0$ there exists $t > T$ such that $|\Phi(\pmb{x_0}, t) - \pmb{x}| < \epsilon\}$}\\
\alpha(\pmb{x_0}) &=& \{\pmb{x}\text{ : for all $T$ and all $\epsilon > 0$ there exists $t < T$ such that $|\Phi(\pmb{x_0}, t) - \pmb{x}| < \epsilon\}$}
\end{aligned}
\]
Equivalently,
\[
\begin{aligned}
\omega(\pmb{x_0}) &=& \{\pmb{x}\text{ : there exists an unbounded, increasing sequence $\{t_k\}$ such that $\displaystyle\lim_{k\to\infty}\Phi(t_k, \pmb{x_0})= \pmb{x}$}\}\\
\alpha(\pmb{x_0}) &=& \{\pmb{x}\text{ : there exists an unbounded, decreasing sequence $\{t_k\}$ such that $\displaystyle\lim_{k\to\infty}\Phi(t_k, \pmb{x_0})= \pmb{x}$}\}\\
\end{aligned}
\]
\end{definition}

\begin{lemma}[Recurrence and Conjugacy \cite{mertikopoulos2018cycles}]\label{lem:congugacy-recurrence}
Let $\Phi_t : A \to A$ and $\Psi_t : B \to B$ be conjugate flows and $\gamma$ be the diffeomorphism which connects them. Then a point $\pmb{x} \in V$ is recurrent for $\Phi$ if and only if $\gamma(\pmb{x}) \in \gamma(V)$ is recurrent for $\Psi$.
\end{lemma}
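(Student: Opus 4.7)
The plan is to unpack both definitions, recurrence and conjugacy, and show that a recurrent sequence for one flow maps to a recurrent sequence for the other via $\gamma$. The only facts needed are that $\gamma$ is a bijective bi-continuous map (so it preserves the neighborhood basis of every point and its inverse) and the intertwining identity $\gamma \circ \Phi_t = \Psi_t \circ \gamma$ provided by conjugacy. Since $\gamma$ is a diffeomorphism, in particular a homeomorphism, both properties are available.

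For the forward direction, I would fix a point $\pmb{x} \in V$ that is recurrent under $\Phi$ and let $W$ be an arbitrary open neighborhood of $\gamma(\pmb{x})$ in $B$. Because $\gamma$ is continuous, the preimage $\gamma^{-1}(W)$ is an open neighborhood of $\pmb{x}$ in $A$. By hypothesis there is an increasing sequence of times $t_n \to \infty$ with $\Phi_{t_n}(\pmb{x}) \in \gamma^{-1}(W)$. Applying $\gamma$ to both sides and using the conjugacy identity yields
\[
\Psi_{t_n}(\gamma(\pmb{x})) = \gamma(\Phi_{t_n}(\pmb{x})) \in W,
\]
so $\gamma(\pmb{x})$ is recurrent for $\Psi$ by Definition \ref{definition:reccurent}.

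For the reverse direction I would just run the same argument with the roles of $\Phi$ and $\Psi$ swapped, replacing $\gamma$ by the homeomorphism $\gamma^{-1} : B \to A$; the identity $\gamma^{-1}\circ \Psi_t = \Phi_t \circ \gamma^{-1}$ follows by composing the original conjugacy identity with $\gamma^{-1}$ on both sides. Given a neighborhood $U$ of $\pmb{x}=\gamma^{-1}(\gamma(\pmb{x}))$, continuity of $\gamma^{-1}$ makes $(\gamma^{-1})^{-1}(U)=\gamma(U)$ an open neighborhood of $\gamma(\pmb{x})$, and recurrence of $\gamma(\pmb{x})$ supplies an increasing sequence $t_n\to\infty$ with $\Psi_{t_n}(\gamma(\pmb{x}))\in \gamma(U)$, whence $\Phi_{t_n}(\pmb{x})=\gamma^{-1}(\Psi_{t_n}(\gamma(\pmb{x})))\in U$, as desired.

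There is no real obstacle here; the lemma is essentially a tautology once the definitions are lined up, and the only subtlety worth flagging is that even though the lemma is stated for a diffeomorphism, nothing beyond the bicontinuity of $\gamma$ is used, so the same conclusion holds whenever $\Phi$ and $\Psi$ are merely topologically conjugate. The condition "$\gamma(\pmb{x}) \in \gamma(V)$" is automatic from $\pmb{x}\in V$ and plays no role in the argument, but recording it emphasizes that the statement of recurrence is coordinate-free on either side of the conjugacy.
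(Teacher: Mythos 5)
Your proof is correct and takes essentially the same route as the paper's: both arguments reduce to the conjugacy identity $\gamma\circ\Phi_t=\Psi_t\circ\gamma$ together with the fact that $\gamma$, being a homeomorphism, carries open neighborhoods to open neighborhoods, with the second direction handled by symmetry via $\gamma^{-1}$. The only cosmetic differences are that you pull neighborhoods back through $\gamma$ where the paper pushes them forward, and that you explicitly note only topological conjugacy (not smoothness) is used --- an accurate observation consistent with the paper's own appeal to ``the basic property of topological conjugacy.''
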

\begin{proof}
We will first prove the if direction. Let's take any open neighborhood $U \subseteq V$ around $\pmb{x}$. Using the diffeomorphism, there is a unique $\gamma(U)  \subseteq \gamma(V)$ and additionally since $U$ is open $\gamma(U)$ is also open. Obviously, $\gamma(\pmb{x}) \in \gamma(U)$. Thus, if $\gamma(\pmb{x})$ is recurrent  there is an unbounded increasing sequence of moments $t_n$ such that
\begin{equation*}
    \Psi(\gamma(\pmb{x}),t_n)\in \gamma(U).
\end{equation*}
This is equivalent with the fact that there is an unbounded increasing sequence of moments $t_n$ such that
\begin{equation*}
    \gamma^{-1}(\Psi(\gamma(\pmb{x}),t_n))\in \gamma^{-1}(\gamma(U)).
\end{equation*}
Using the basic property of topological conjugacy, we have that 
\begin{equation*}
   \Phi(\pmb{x},t_n)=\gamma^{-1}(\Psi(\gamma(U),t_n)).
\end{equation*}
Thus, for $t_n$ we have that
\begin{equation*}
    \Phi(\pmb{x},t_n) \in U.
\end{equation*}
It follows that $\pmb{x}$ is also recurrent for $\Phi$. The result for the opposite direction follows immediately by using the inverse map.
\end{proof}

\subsection{Stable Manifold Theorems}

\begin{theorem}[Stable Manifold Theorem for Continuous Time Dynamical Systems p.120 \cite{Perko}]\label{theorem:smt-contiuous}
Let $E$ be an open subset of $\mathbb{R}^n$ containing the origin, let $f\in C^1(E)$, and let $\phi_t$ be the flow of the nonlinear system $\dot{\pmb{x}}=f(\pmb{x})$. Suppose that $f(\mathbf{0}) = \mathbf{0}$ and that $Df(\mathbf{O})$ has $k$ eigenvalues
with negative real part and $n - k$ eigenvalues with positive real part. Then
there exists a $k$-dimensional differentiable manifold $S$ tangent to the stable
subspace $E^s$ of the linear system $\dot{\pmb{x}}=Df(\mathbf{0})\pmb{x}$ at $\mathbf{0}$ such that for all $t \ge 0$, $\phi_t(S)\subseteq S$
and for all $\pmb{x}_0\in S$:
\begin{equation*}
  \displaystyle\lim_{t\to \infty}\phi_t(\pmb{x}_0)=\mathbf{0}  
\end{equation*}
and there exists an $n - k$ dimensional differentiable manifold $U$ tangent to
the unstable subspace $E^u$ of the linear system $\dot{\pmb{x}}=Df(\mathbf{0})\pmb{x}$ at $\mathbf{0}$ such that for all $t \le 0$, $\phi_t(U)\subseteq U$
and for all $\pmb{x}_0\in U$:
\begin{equation*}
  \displaystyle \lim_{t\to -\infty}\phi_t(\pmb{x}_0)=\mathbf{0} 
\end{equation*}
\end{theorem}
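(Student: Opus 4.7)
The plan is to prove the existence of the stable manifold via the Lyapunov--Perron integral equation method; the unstable manifold then follows by time reversal. First I would change coordinates to put $A = Df(\mathbf{0})$ into block-diagonal form $A = \mathrm{diag}(A_s, A_u)$, where $A_s$ has all eigenvalues with real part at most $-\alpha < 0$ and $A_u$ has all eigenvalues with real part at least $\alpha > 0$, and write $\pmb{x} = (\pmb{u}, \pmb{v})$ with $\pmb{u} \in \mathbb{R}^k$, $\pmb{v} \in \mathbb{R}^{n-k}$. Splitting $f(\pmb{x}) = A\pmb{x} + N(\pmb{x})$ gives $N \in C^1$ with $N(\mathbf{0}) = \mathbf{0}$ and $DN(\mathbf{0}) = \mathbf{0}$. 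Multiplying $N$ by a smooth bump supported in a small ball $B_{2\delta}$ that equals $1$ on $B_\delta$, I may further assume $N$ is globally Lipschitz with Lipschitz constant $\varepsilon$ as small as I wish; solutions of the modified system that stay in $B_\delta$ agree with solutions of the original.

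Next, I would characterize trajectories on the stable manifold as the bounded (forward-time) solutions of the variation-of-parameters equation. Such a trajectory with prescribed $\pmb{u}(0) = \pmb{u}_0$ must satisfy
\begin{align*}
\pmb{u}(t) &= e^{tA_s}\pmb{u}_0 + \int_0^t e^{(t-s)A_s}\, N_s(\pmb{u}(s), \pmb{v}(s))\,\mathrm{d}s, \\
\pmb{v}(t) &= -\int_t^{\infty} e^{(t-s)A_u}\, N_u(\pmb{u}(s), \pmb{v}(s))\,\mathrm{d}s,
\end{align*}
where the initial condition on $\pmb{v}$ is \emph{not} prescribed; it is instead determined by the requirement that $\pmb{v}$ remain bounded, which forces the unstable free term to vanish. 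Let $T$ denote the operator sending a candidate $\pmb{x}(\cdot)$ to the right-hand side, acting on the Banach space $\mathcal{X}$ of continuous maps $[0,\infty) \to \mathbb{R}^n$ equipped with the exponentially weighted norm $\|\pmb{x}\|_\beta = \sup_{t\ge 0} e^{\beta t}\|\pmb{x}(t)\|$ for some $0 < \beta < \alpha$.

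The heart of the argument is to show that $T$ is a contraction on a closed ball of $\mathcal{X}$ once $\|\pmb{u}_0\|$ and $\varepsilon$ are small. Using $\|e^{tA_s}\| \le Ce^{-\alpha t}$ for $t \ge 0$ and $\|e^{tA_u}\| \le Ce^{\alpha t}$ for $t \le 0$, both integral terms contribute prefactors of order $C\varepsilon/(\alpha-\beta)$, which I can make less than $1/2$. Banach's fixed point theorem then produces, for each small $\pmb{u}_0 \in E^s$, a unique bounded solution $\pmb{x}(\cdot;\pmb{u}_0) \in \mathcal{X}$ decaying like $e^{-\beta t}$. Setting $h(\pmb{u}_0) := \pmb{v}(0;\pmb{u}_0)$, the stable manifold is realized as the graph $S = \{(\pmb{u}_0, h(\pmb{u}_0)) : \pmb{u}_0 \in B_\delta \cap E^s\}$. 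Forward invariance $\phi_t(S) \subseteq S$ for $t \ge 0$ follows from the uniqueness of bounded solutions (a time shift of a bounded solution is still bounded). Differentiating the fixed-point equation with respect to $\pmb{u}_0$ and using $DN(\mathbf{0}) = \mathbf{0}$ yields $h(\mathbf{0}) = \mathbf{0}$ and $Dh(\mathbf{0}) = \mathbf{0}$, so $S$ is tangent to $E^s$ at the origin.

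Finally, the unstable manifold is obtained by applying the entire construction to the time-reversed system $\dot{\pmb{x}} = -f(\pmb{x})$, whose linearization has the roles of $A_s$ and $A_u$ interchanged. The main obstacle I expect is upgrading the fixed-point regularity to smoothness of $h$: the contraction naturally lives in a space of continuous functions, so proving $h \in C^1$ (and $C^r$ when $f \in C^r$) requires either a parallel contraction argument for the derivative (showing the formal derivative of $T$ acts as a contraction on a suitable bundle), or a fiber-contraction induction of the type used in Hirsch--Pugh--Shub, both of which are considerably more delicate than the bare existence argument.
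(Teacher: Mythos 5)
The paper does not prove this statement at all: it is quoted verbatim as background material from the cited textbook (Perko, p.\ 120), where the proof is exactly the successive-approximation/Lyapunov--Perron integral-equation argument you outline. Your sketch is correct and matches that standard proof --- block-diagonalization, cut-off of the nonlinearity, the variation-of-parameters equation with the unstable component integrated from $+\infty$ to enforce boundedness, contraction in an exponentially weighted norm, the graph map $h$ with $Dh(\mathbf{0})=\mathbf{0}$ giving tangency, and time reversal for the unstable manifold --- and you correctly flag that upgrading the fixed point to $C^1$ regularity of $h$ is the genuinely delicate step.
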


\begin{theorem}[Center and Stable Manifolds, p. 65 of \cite{shub}]\label{theorem:smt-discrete}
Let $\pmb{p}$ be a fixed point for the $C^r$ local diffeomorphism $h: U \to \mathbb{R}^n$ where $U \subset \mathbb{R}^n$ is an open neighborhood of $\pmb{p}$ in $\mathbb{R}^n$ and $r \geq 1$. Let $E^s \oplus E^c \oplus E^u$ be the invariant splitting
of $\mathbb{R}^n$ into generalized eigenspaces of $Dh(\pmb{p})$\footnote{Jacobian of $h$ evaluated at $\pmb{p}$.} corresponding to
eigenvalues of absolute value less than one, equal to one, and greater than one. To the $Dh(\pmb{p})$ invariant subspace $E^s\oplus
E^c$ there is an associated local $h$ invariant $C^r$ embedded disc $W^{sc}_{loc}$ of dimension $dim(E^s \oplus E^c)$, and ball $B$ around $\pmb{p}$ such that:
\begin{equation*} h(W^{sc}_{loc}) \cap B \subset W^{sc}_{loc}.\textrm{  If } h^n(\pmb{x}) \in B \textrm{ for all }n \geq 0,
\textrm{ then }\pmb{x} \in W^{sc}_{loc}.
\end{equation*}
\end{theorem}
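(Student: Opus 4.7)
The plan is to prove the Center/Stable Manifold Theorem by the classical \emph{graph transform} (Hadamard--Perron) method, producing $W^{sc}_{loc}$ as the graph of a function $\phi: E^s\oplus E^c \to E^u$. First, I translate so that $\pmb{p}=\mathbf{0}$ and split $\mathbb{R}^n = E^{sc} \oplus E^u$ with $E^{sc}=E^s\oplus E^c$. Writing $A = Dh(\mathbf{0})$ in this block form gives $A = \mathrm{diag}(A_{sc}, A_u)$, where $\mathrm{spec}(A_{sc}) \subset \{|z|\le 1\}$ and $\mathrm{spec}(A_u) \subset \{|z|>1\}$. In particular $A_u$ is invertible with $\|A_u^{-1}\|$ strictly less than some $\mu<1$ in a suitably adapted norm (obtained from Jordan/spectral-radius estimates). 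Decomposing $h(x,y) = (A_{sc} x + f(x,y),\, A_u y + g(x,y))$ with $f,g$ of class $C^r$ and $f(\mathbf{0})=g(\mathbf{0})=0$, $Df(\mathbf{0})=Dg(\mathbf{0})=0$, I multiply $f,g$ by a smooth cutoff $\chi$ supported in a small ball $B_\rho$ to obtain globally defined $\tilde f, \tilde g$ with arbitrarily small global Lipschitz constants (say $\le \varepsilon$). The modified map $\tilde h$ agrees with $h$ on $B_{\rho/2}$, so any invariant graph I build for $\tilde h$ yields a local invariant graph for $h$.

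Next I set up the graph transform. Invariance of $\mathrm{graph}(\phi)=\{(x,\phi(x))\}$ under $\tilde h$ is equivalent to
\begin{equation*}
\phi(x) \;=\; A_u^{-1}\bigl[\phi\bigl(A_{sc}x + \tilde f(x,\phi(x))\bigr) - \tilde g(x,\phi(x))\bigr] \;=:\; (\mathcal{T}\phi)(x).
\end{equation*}
I work in the Banach space $\mathcal{X}$ of bounded Lipschitz maps $\phi: E^{sc}\to E^u$ with $\phi(\mathbf{0})=0$, $\mathrm{Lip}(\phi)\le 1$, endowed with the sup norm. For $\varepsilon$ sufficiently small, a direct estimate using $\|A_u^{-1}\|\le \mu<1$ and the Lipschitz smallness of $\tilde f,\tilde g$ shows that $\mathcal{T}$ maps $\mathcal{X}$ into itself and is a strict contraction; its unique fixed point $\phi^*$ produces a Lipschitz $\tilde h$-invariant graph. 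Restricting to $B$ (a ball in which the cutoff is inactive), this graph is the desired $W^{sc}_{loc}$, satisfying $h(W^{sc}_{loc})\cap B \subset W^{sc}_{loc}$.

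To upgrade $\phi^*$ from Lipschitz to $C^r$, I use a fiber contraction argument. Formally differentiating the fixed-point equation for $\phi$ gives a candidate equation for $D\phi$: $\Psi = A_u^{-1}[(\Psi\circ\pi) (A_{sc} + D_x\tilde f + D_y\tilde f\,\Psi) - D_x\tilde g - D_y\tilde g\,\Psi]$, where $\pi(x)=A_{sc}x+\tilde f(x,\phi^*(x))$. I view this as a fibered operator over the base contraction $\mathcal{T}$ and verify the Hirsch--Pugh--Shub fiber contraction hypothesis: the base map is a contraction in $\phi$, and the fiber map is a contraction in $\Psi$ with uniform rate (again using $\|A_u^{-1}\|\le \mu$). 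The fiber contraction theorem then yields a continuous fixed point $\Psi^*$, which one identifies with $D\phi^*$ by a standard smoothing/approximation argument (e.g.\ convolving $\tilde f,\tilde g$ with mollifiers and passing to the limit). Iterating this procedure on jet bundles up to order $r$ gives $\phi^*\in C^r$.

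Finally, I verify the \emph{local uniqueness} clause: if $h^n(\pmb{x})\in B$ for all $n\ge 0$, then $\pmb{x}\in W^{sc}_{loc}$. The forward orbit of $\pmb{x}$ stays in the region where $\tilde h=h$, so $\tilde h^n(\pmb{x})\in B$ for all $n$. Writing $\pmb{x}=(x_0,y_0)$ and $\pmb{x}-(x_0,\phi^*(x_0)) = (0, y_0-\phi^*(x_0))$, I track how the ``vertical deviation'' $d_n := y_n - \phi^*(x_n)$ evolves under $\tilde h$. Using invariance of the graph and the contraction estimate, $d_{n+1} = A_u d_n + O(\varepsilon\|d_n\|)$, so $\|d_n\|\ge \mu^{-n}(1-C\varepsilon)\|d_0\|$; boundedness of the orbit forces $d_0=0$, giving $\pmb{x}\in\mathrm{graph}(\phi^*)=W^{sc}_{loc}$. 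I expect the \emph{main obstacle} to be the $C^r$ step: unlike the pure stable manifold case, there is no spectral gap \emph{within} $E^c$, so the Lyapunov exponents along the center direction are merely non-expanding, and the fiber contraction requires careful choice of adapted norms and bump functions so that the contraction rate $\mu$ obtained from $A_u$ dominates the $r$-fold amplification of derivatives on $E^{sc}$; this forces $\varepsilon$ (hence the neighborhood $B$) to shrink with $r$, which is exactly why the theorem is only local.
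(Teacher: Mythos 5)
The paper does not prove this statement at all---it is quoted as an off-the-shelf background tool, with the proof deferred to the cited reference (Shub, p.~65), and the proof given there is precisely the graph-transform/fiber-contraction argument you outline. Your proposal is correct in all essentials (the invariance equation for the graph transform, the contraction estimate in an adapted norm with $\|A_u^{-1}\|\le\mu<1$ and $\|A_{sc}\|\le 1+\delta$, the $C^r$ upgrade via the Hirsch--Pugh--Shub fiber contraction---where the $r$-gap condition holds automatically because $\mathrm{spec}(A_{sc})$ lies in the closed unit disc while $A_u$ strictly expands---and the geometric-growth estimate $\|d_{n+1}\|\ge(\mu^{-1}-C\varepsilon)\|d_n\|$ for the characterization clause), so it is essentially the same approach as the source the paper relies on.
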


\subsection{Regular Value Theorem}
\begin{definition}
Let $f : U \rightarrow V$ be a smooth map between same dimensional manifolds. We denote that $x \in U$ is a regular point if the derivative is nonsingular.
$y \in V$ is called a {\bf regular value} if $f^{-1}(y)$ contains only regular points.
If the derivative is singular, then $x$ is called a {\bf critical point}. We also say $y \in V$ is a critical value if $y$ is not a regular value.
\end{definition}
\begin{theorem}[Regular Value Theorem]
 If $y \in Y$ is a regular value of $f : X \rightarrow Y$ then $f^{-1}(y)$ is a manifold of dimension $n-m$, since $dim(X) = n$ and 
 $dim(Y ) = m$.
\end{theorem}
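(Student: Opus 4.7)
The plan is to prove that $f^{-1}(y)$ is a smooth embedded submanifold of dimension $n-m$ by constructing submanifold charts at every point of the preimage, using the Implicit Function Theorem (equivalently, the local Submersion Theorem). The starting observation is that by the definition of a regular value, at every $x \in f^{-1}(y)$ the differential $df_x : T_xX \to T_yY$ is a surjection of the tangent spaces; in particular this forces $n \ge m$ and places us exactly in the hypothesis of the local normal form for submersions.

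First, I would fix an arbitrary $x \in f^{-1}(y)$ and choose smooth charts $(U,\varphi)$ around $x$ and $(V,\psi)$ around $y$ with $\varphi(x)=0\in\mathbb{R}^n$ and $\psi(y)=0\in\mathbb{R}^m$. In these charts, $\tilde{f} := \psi\circ f\circ \varphi^{-1}$ is a smooth map from a neighborhood of $0$ in $\mathbb{R}^n$ to $\mathbb{R}^m$ with $\tilde{f}(0)=0$ and $d\tilde{f}_0$ surjective (surjectivity of the differential is chart-independent). After a linear change of coordinates on $\mathbb{R}^n$, I may assume the Jacobian $D\tilde{f}(0)$ has block form $[A \mid 0]$ with $A\in GL_m(\mathbb{R})$, splitting the source into the first $m$ coordinates plus the remaining $n-m$.

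Next I would straighten $\tilde{f}$ by defining the auxiliary map $\Psi:\mathbb{R}^n\to\mathbb{R}^n$ by
\[
\Psi(x_1,\ldots,x_n) \;=\; \bigl(\tilde{f}(x_1,\ldots,x_n),\, x_{m+1},\ldots,x_n\bigr).
\]
Its differential at $0$ is the block matrix $\begin{pmatrix} A & * \\ 0 & I_{n-m}\end{pmatrix}$, which is invertible, so by the Inverse Function Theorem $\Psi$ is a local diffeomorphism near $0$. In the new chart $\Psi$, the map $\tilde{f}$ is literally the linear projection $(u_1,\ldots,u_n)\mapsto (u_1,\ldots,u_m)$, and consequently $\tilde{f}^{-1}(0)$ corresponds locally to the slice $\{u_1=\cdots=u_m=0\}$, which is manifestly a smooth submanifold of dimension $n-m$. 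Pulling back through $\varphi$ exhibits $\Psi\circ\varphi$ as a submanifold chart for $f^{-1}(y)$ centered at $x$.

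Finally, since this construction can be executed at every $x\in f^{-1}(y)$, the preimage is covered by submanifold charts of common codimension $m$, establishing that it is a smooth embedded submanifold of dimension $n-m$. The only real technical step is arranging the coordinate splitting so that the augmented map $\Psi$ is a local diffeomorphism — everything else (smooth compatibility of the charts, the dimension count, and the invariance under the original chart changes) is automatic from the Inverse Function Theorem. No global hypothesis such as compactness or properness is needed, because the submanifold property is purely local; the regularity hypothesis on $y$ is used precisely to guarantee the surjectivity of $df_x$ uniformly over the preimage, which is what allows the local construction to succeed at every relevant point.
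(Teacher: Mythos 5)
Your proof is correct: it is the canonical preimage-theorem argument, putting $f$ into the local normal form of a submersion at each point of $f^{-1}(y)$ via a linear coordinate splitting plus the Inverse Function Theorem applied to the augmented map $\Psi$, and then reading off the slice $\{u_1=\cdots=u_m=0\}$ as a submanifold chart. Note that the paper states the Regular Value Theorem only as background (it is used in the proof of Theorem \ref{theorem:orbit}) and supplies no proof of its own, so there is nothing to diverge from; your argument coincides with the standard textbook proof that the paper implicitly relies on.
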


\clearpage

\section[Omitted Proofs of Section \ref{section:two-by-two}: Cycles in hidden bilinear games with two strategies]{Omitted Proofs of Section \ref{section:two-by-two}\\ Warm up: Cycles in hidden bilinear games with two strategies}
\shadowbox{
\begin{minipage}[c]{5in}
In this first section, we show a key technical lemma which will be used in many different parts of our proof. More specifically, it shows how someone can derive the solution for a non-autonomous system via a conjugate autonomous dynamical system. The main intuition is that if the non-autonomous term is multiplicative and common across all terms of a vector field then it dictates the magnitude of the vector field (the speed of
the motion), but does not affect directionality other than moving backwards or forwards along the same trajectory. 
\end{minipage}
}
\begin{lemma}[Restated \lemref{lemma:reparametrization}]\label{restated-lemma:reparametrization}
Let $k: \mathbb{R}^d \to \mathbb{R}$ be a $C^2$ function. Let $h: \mathbb{R} \to \mathbb{R}$ be a $C^1$ function and $\pmb{x}(t) = \rho(t)$ be the unique solution of the dynamical system $\Sigma_1$. Then for the dynamical system $\Sigma_2$ the unique solution is $\pmb{z}(t) = \rho ( \int_{0}^t h(s) \mathrm{d}s )$
\begin{equation*}
\begin{Bmatrix}
    \dot{\pmb{x}} &=& \nabla k(\pmb{x}) \\
    \pmb{x}(0) &=& \pmb{x}_0
\end{Bmatrix}: \Sigma_1\quad
\begin{Bmatrix}
    \dot{\pmb{z}} &=& h(t) \nabla k(\pmb{z}) \\
    \pmb{z}(0) &=& \pmb{x}_0
\end{Bmatrix} : \Sigma_2
\end{equation*}
\end{lemma}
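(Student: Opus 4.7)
The plan is to proceed by direct verification using the chain rule, then invoke Picard--Lindelöf for uniqueness.

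First I would introduce the time reparametrization $\tau(t) := \int_0^t h(s)\,\mathrm{d}s$. Since $h$ is $C^1$, the map $\tau$ is $C^1$ with $\dot{\tau}(t) = h(t)$ and $\tau(0) = 0$. Define the candidate solution $\pmb{z}(t) := \rho(\tau(t))$. The initial condition is immediate: $\pmb{z}(0) = \rho(0) = \pmb{x}_0$.

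Next I would check the ODE. Because $k\in C^2$, the vector field $\nabla k$ is $C^1$, so $\rho$ is $C^2$ on its domain and $\dot{\rho}(\tau) = \nabla k(\rho(\tau))$ holds for all admissible $\tau$. Applying the chain rule,
\[
\dot{\pmb{z}}(t) \;=\; \dot{\rho}(\tau(t))\,\dot{\tau}(t) \;=\; \nabla k(\rho(\tau(t)))\,h(t) \;=\; h(t)\,\nabla k(\pmb{z}(t)),
\]
so $\pmb{z}$ satisfies $\Sigma_2$ with the prescribed initial condition.

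Finally I would address uniqueness. The right-hand side of $\Sigma_2$, viewed as a (possibly time-dependent) vector field $F(t,\pmb{z}) := h(t)\,\nabla k(\pmb{z})$, is continuous in $t$ (since $h$ is $C^1$) and locally Lipschitz in $\pmb{z}$ (since $\nabla k$ is $C^1$ by the $C^2$ assumption on $k$). Hence the Picard--Lindelöf theorem guarantees local uniqueness of solutions of $\Sigma_2$; patching together maximal intervals shows the solution constructed above is the unique one. I do not expect any real obstacle here: the content of the lemma is genuinely a one-line chain-rule identity, and the $C^2$ hypothesis on $k$ together with $C^1$ on $h$ is precisely what is needed so that both the differentiation and the Picard--Lindelöf invocation are legitimate.
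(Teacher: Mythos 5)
Your proof is correct and follows essentially the same route as the paper's: define the reparametrized time $\tau(t)=\int_0^t h(s)\,\mathrm{d}s$, verify the initial condition, and confirm the ODE by the chain rule. The only difference is that you make the uniqueness step explicit via Picard--Lindel\"{o}f, which the paper leaves implicit (it relies on the Lipschitz/flow setup from its preliminaries); this is a welcome bit of added rigor, not a different argument.
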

\begin{proof}
Firstly, notice that it holds $\rho(0)= \pmb{x}_0$ and 
$ \dot{\rho} = \nabla k(\rho) $, since $\rho$ is the unique solution of $\Sigma_1$
It is easy to check that:
\begin{align*}
    \pmb{z}(0)&=\rho ( \int_{0}^0 h(s) \mathrm{d}s )=\rho(0)=\pmb{x}_0\\
    \dot{\pmb{z}}&= \nabla{\rho ( \displaystyle\int_0^t   h(s) \mathrm{d}s )}\times  \derivativeclosed{ \displaystyle\int_0^t  h(s) \mathrm{d}s}\\
    &=\nabla\rho ( \displaystyle\int_0^t   h(s) \mathrm{d}s )  h(t)
\end{align*}
\end{proof}

\shadowbox{
\begin{minipage}[c]{5in}
The next proposition states that initial condition $(\pmb{\theta}(0),\pmb{\phi}(0))$ as well as $\{f(t),g(t)\}_{t=0}^{\infty}$ are sufficient to derive the complete system state of Continuous GDA $(\theta_{\theta_0}(t),\phi_{\phi_0}(t))$.
The importance of the below theorem arises when someone takes into consideration periodicity and recurrence phenomena. Due to the existence of mapping $(f(t), g(t))$ to a unique $(\pmb{\theta}(t), \pmb{\phi}(t))$ given some initial condition $(\pmb{\theta}(0),\pmb{\phi}(0))$, any periodic or recurrent behavior of $(f(t), g(t))$ extends to the system trajectories.
\end{minipage}
}
\begin{theorem}[Restated \theoref{theorem:reparametrization}]\label{restated-theorem:reparametrization}
For each $\pmb{\theta}(0),\pmb{\phi}(0)$, under the dynamics of Equation \ref{eq:eq_gda}, there are $C^1$ functions $(X_{\pmb{\theta}(0)},X_{\pmb{\phi}(0)})$ such that $X_{\pmb{\theta}(0)}:f_{\pmb{\theta}(0)}\to \mathbb{R}^{n}$
,$X_{\pmb{\phi}(0)}:g_{\pmb{\phi}(0)}\to\mathbb{R}^{n}$
and $\pmb{\theta}(t) = X_{\pmb{\theta}(0)} (f(t))$,
$\pmb{\phi}(t)~=~X_{\pmb{\phi}(0)} (g(t)).$
\end{theorem}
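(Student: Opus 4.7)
The plan is to apply Lemma~\ref{lemma:reparametrization} to the $\pmb{\theta}$-block of Equation~\ref{eq:eq_gda} with $k=f$ and $h(t)=-v\bigl(g(\pmb{\phi}(t))-q\bigr)$, and then invert $f$ along the pure gradient-ascent orbit that results. By the lemma, $\pmb{\theta}(t)=\rho(\tau(t))$ where $\rho$ is the unique solution of the autonomous flow $\dot\rho=\nabla f(\rho)$ with $\rho(0)=\pmb{\theta}(0)$, and $\tau(t)=\int_0^t h(s)\,ds$. So the entire trajectory lives in the image of the gradient-ascent curve $\rho$; the multiplicative prefactor $h(t)$ only rescales (and possibly reverses) how fast we traverse that curve. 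The remaining task is to show that $f$ itself is a legitimate coordinate on this curve.

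\textbf{Step 1 (monotonicity of $f$ along $\rho$).} Set $\sigma(\tau):=f(\rho(\tau))$. By the chain rule and the autonomous ODE for $\rho$,
$$\sigma'(\tau)=\nabla f(\rho(\tau))^{\top}\dot\rho(\tau)=\norm{\nabla f(\rho(\tau))}^2\geq 0,$$
and the inequality is strict wherever $\rho(\tau)$ is not a stationary point of $f$. Thus $\sigma$ is a $C^1$ non-decreasing map, and onto its image $f_{\pmb{\theta}(0)}$ by the very definition of that set; on the open sub-interval where $\sigma'>0$ it is a strict $C^1$ bijection onto (the interior of) $f_{\pmb{\theta}(0)}$.

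\textbf{Step 2 (build $X_{\pmb{\theta}(0)}$ and verify).} By the inverse function theorem, $\sigma^{-1}:f_{\pmb{\theta}(0)}\to\mathbb{R}$ is $C^1$ on the interior. Define
$$X_{\pmb{\theta}(0)}(y):=\rho\bigl(\sigma^{-1}(y)\bigr),\qquad y\in f_{\pmb{\theta}(0)},$$
which is $C^1$ as a composition of $C^1$ maps. Finally, using $f(t)=f(\pmb{\theta}(t))=f(\rho(\tau(t)))=\sigma(\tau(t))$ we get $\tau(t)=\sigma^{-1}(f(t))$, and substituting into $\pmb{\theta}(t)=\rho(\tau(t))$ yields $\pmb{\theta}(t)=X_{\pmb{\theta}(0)}(f(t))$. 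The construction of $X_{\pmb{\phi}(0)}$ is symmetric, applying Lemma~\ref{lemma:reparametrization} with $k=g$ and $h(t)=v\bigl(f(\pmb{\theta}(t))-p\bigr)$.

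\textbf{Main obstacle.} The delicate point is the degenerate regime in which the ascent orbit $\rho$ reaches, or merely asymptotes to, a stationary point of $f$; there $\sigma'=0$ and the inverse function theorem does not apply at that point. If $\rho$ hits the stationary point in finite time, it stays there, so $f_{\pmb{\theta}(0)}$ collapses to a singleton and $X_{\pmb{\theta}(0)}$ is trivially $C^1$. If $\rho$ only approaches one in the limit, then $f_{\pmb{\theta}(0)}$ is a half-open interval on whose interior $\sigma$ is a strict $C^1$ bijection, which is what is needed to define $X_{\pmb{\theta}(0)}$ as a $C^1$ function there. The safe-initialisation hypothesis used in subsequent theorems rules out both pathologies and makes the $C^1$ structure of $X_{\pmb{\theta}(0)}$ global on $f_{\pmb{\theta}(0)}$.
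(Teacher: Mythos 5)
Your proposal is correct and follows essentially the same route as the paper: apply Lemma~\ref{lemma:reparametrization} to the $\pmb{\theta}$-block with $h(t)=-v\bigl(g(\pmb{\phi}(t))-q\bigr)$, observe that $f$ is strictly increasing along the autonomous gradient-ascent orbit, and define $X_{\pmb{\theta}(0)}=\rho\circ\sigma^{-1}$, which is exactly the paper's $\gamma_{\pmb{\theta}(0)}\circ A^{-1}_{\pmb{\theta}(0)}$ (your explicit inverse-function-theorem step is in fact a slight sharpening of the paper's bare assertion that this composition is $C^1$). One small correction to your obstacle discussion: the ``half-open interval'' scenario never occurs, since by uniqueness of solutions the gradient-ascent orbit can neither reach a stationary point in finite time (unless it starts there) nor attain its limiting $f$-value, so $f_{\pmb{\theta}(0)}$ is either a singleton or an open interval (the paper's Lemma~\ref{lemma:interval}), and your construction therefore already covers the entire domain without needing the safe-initialization hypothesis.
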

\begin{proof}
Let us first study a simpler dynamical system $(\Sigma^*)$ with unique solution of  $\gamma_{\pmb{\theta}(0)}(t)$. 
\begin{align*}
  (\Sigma^*)\equiv
    \begin{Bmatrix}
    \dot{\pmb{\theta}} &=& \nabla f(\pmb{\theta}) \\
    \pmb{\theta}(0) &=& \pmb{\theta}_0
    \end{Bmatrix}  
\end{align*}
It is easy to observe that:
\begin{align*}
    \dot{f} = \nabla f(\pmb{\theta}) \dot{\pmb{\theta}} = \norm{\nabla f(\pmb{\theta})}^2 
\end{align*}
If $\pmb{x}_0$ is a stationary point of $f$ then the trajectory is a single point and the theorem holds trivially. If $\pmb{x}_0$ is not a stationary point of $f$, $f$ continuously increases along the trajectory of the dynamical system.  Therefore $A_{\pmb{\theta}(0)}(t)=f(\gamma_{\pmb{x}_0}(t))$ is an increasing function and therefore invertible. Let us call $A^{-1}_{\pmb{\theta}(0)}(f)$ the inverse.

Let's recall now the dynamical system of our interest ( \eqref{eq:eq_gda} )
\[\text{CGDA}:\begin{Bmatrix}
\begin{aligned} 
    \dot{\pmb{\theta}} &= - v \nabla f(\pmb{\theta}) (g(\pmb{\phi})- q)\\
    \dot{\pmb{\phi}} &=  v \nabla g(\pmb{\phi}) (f(\pmb{\theta})- p)
\end{aligned}
\end{Bmatrix}
\] and more precisely to the $\pmb{\theta}$-part of the system,i.e
\[
    (\Sigma)\equiv
    \begin{Bmatrix}
    \dot{\pmb{\theta}} &=& - v \nabla f(\pmb{\theta}) (g(\pmb{\phi})- q)\\
    \pmb{\theta}(0) &=& \pmb{\theta}_0
    \end{Bmatrix}
\]
Applying \lemref{restated-lemma:reparametrization} for the first equation with $h(t)=
- v  (g(\pmb{\phi}(t))- q)$,  we have that the solution of the dynamical system $(\Sigma)$ is \[\psi_{\pmb{\theta}(0)}(t) = \gamma_{\pmb{\theta}(0)}( \underbrace{\displaystyle\int_0^t   h(s) \mathrm{d}s}_{H(t)} ) = \gamma_{\pmb{\theta}(0)}(H(t)) \]

Thus it holds 
\begin{equation*}
  f(\psi_{\pmb{\theta}(0)}(t))=f(\gamma_{\pmb{\theta}(0)}(H(t)))=A_{\pmb{\theta}(0)}(H(t))  
\end{equation*}
or equivalently 
\begin{equation*}
    H(t)= A^{-1}_{\pmb{\theta}(0)}(f(\psi_{\pmb{\theta}(0)}(t)))
\end{equation*}

Plug in back to the definition of the solution, clearly we have that :\[\psi_{\pmb{\theta}(0)}(t) = \gamma_{\pmb{\theta}(0)}(
A^{-1}_{\pmb{\theta}(0)}(f(\psi_{\pmb{\theta}(0)}(t))))\]
Therefore for $X_{\pmb{\theta}(0)}(f) = \gamma_{\pmb{\theta}(0)}\circ A^{-1}_{\pmb{\theta}(0)}(f)$, which is $C^1$ as composition of $C^1$ functions, the theorem holds. 

We can perform the equivalent analysis for the $\pmb{\phi}(0)$ and $g$ and  prove that for each $\pmb{\phi}(0)$, under the dynamics Continuous GDA (Equation \ref{eq:eq_gda}), there is a $C^1$ function $X_{\pmb{\phi}(0)}~:~g_{\pmb{\phi}(0)}~\to~ \mathbb{R}^{n}$ such that $\pmb{\phi}(t) = X_{\pmb{\phi}(0)} (g(t))$.
\end{proof}

\shadowbox{
\begin{minipage}[c]{5in}
Notice that the domains of the aforementioned functions are in fact either singleton points or open intervals. This will be important when we study the safety of initial conditions. 
\end{minipage}
}
\begin{lemma}[Properties of $f_{\pmb{\theta}(0)}$] \label{lemma:interval}
If $\pmb{\theta}(0)$ is a stationary point of $f$, then $f_{\pmb{\theta}(0)}$ consists only of a single number. Otherwise, $f_{\pmb{\theta}(0)}$ is an open interval.
\end{lemma}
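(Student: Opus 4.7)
The plan is to split on whether $\pmb{\theta}(0)$ is a stationary point of $f$ and, in each case, exploit the uniqueness of solutions to the $C^1$ ODE $\dot{\pmb{\theta}} = \nabla f(\pmb{\theta})$ (which is applicable since $f\in C^2$ implies $\nabla f$ is locally Lipschitz).

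\textbf{Stationary case.} If $\nabla f(\pmb{\theta}(0)) = \mathbf{0}$, then the constant curve $\pmb{\theta}(t)\equiv \pmb{\theta}(0)$ solves the initial value problem for gradient ascent. By uniqueness it is \emph{the} solution, so $f_{\pmb{\theta}(0)} = \{f(\pmb{\theta}(0))\}$ is a singleton.

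\textbf{Non-stationary case.} Let $(T_-, T_+)$ be the maximal interval of existence of the gradient-ascent trajectory $\gamma_{\pmb{\theta}(0)}(\cdot)$ through $\pmb{\theta}(0)$. First I would verify that $\nabla f(\gamma_{\pmb{\theta}(0)}(t))\neq \mathbf{0}$ for every $t\in(T_-,T_+)$: if instead some $t^\star$ satisfied $\nabla f(\gamma_{\pmb{\theta}(0)}(t^\star)) = \mathbf{0}$, then $\pmb{\eta}(t) \equiv \gamma_{\pmb{\theta}(0)}(t^\star)$ would be a stationary solution of the ODE passing through that same point at time $t^\star$, and uniqueness would force $\gamma_{\pmb{\theta}(0)}$ to be identically equal to this stationary curve, contradicting $\nabla f(\pmb{\theta}(0))\neq\mathbf{0}$. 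Consequently
\[
\frac{d}{dt} f(\gamma_{\pmb{\theta}(0)}(t)) \;=\; \nabla f(\gamma_{\pmb{\theta}(0)}(t))\cdot \dot\gamma_{\pmb{\theta}(0)}(t) \;=\; \norm{\nabla f(\gamma_{\pmb{\theta}(0)}(t))}^2 \;>\; 0
\]
for all $t\in(T_-,T_+)$, so $A_{\pmb{\theta}(0)}(t) := f(\gamma_{\pmb{\theta}(0)}(t))$ is strictly increasing and continuous on the open interval $(T_-,T_+)$.

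\textbf{Openness of the image.} A strictly increasing continuous function on an open interval is a homeomorphism onto its image, and that image is the open interval $\bigl(\lim_{t\to T_-^+} A_{\pmb{\theta}(0)}(t),\;\lim_{t\to T_+^-} A_{\pmb{\theta}(0)}(t)\bigr)$, where the endpoints (possibly $\pm\infty$) are not attained because they correspond to times outside the domain. Since by definition $f_{\pmb{\theta}(0)} = A_{\pmb{\theta}(0)}\bigl((T_-,T_+)\bigr)$, we conclude that $f_{\pmb{\theta}(0)}$ is an open interval.

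\textbf{Main obstacle.} The only non-routine point is ruling out that the gradient-ascent trajectory ever hits a critical point of $f$; once that is settled via the uniqueness theorem, the rest is a one-line monotonicity argument plus the elementary fact that a strictly monotone continuous image of an open interval is an open interval. The same argument applied to $g$ and $\pmb{\phi}(0)$ yields the analogous statement for $g_{\pmb{\phi}(0)}$.
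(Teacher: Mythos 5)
Your proof is correct and takes essentially the same route as the paper's: the stationary case is dispatched by uniqueness of solutions, and the non-stationary case follows from strict monotonicity of $f$ along the gradient-ascent trajectory, whose continuous strictly increasing image is an open interval. You are in fact somewhat more careful than the paper, which tacitly takes the trajectory to be defined on all of $(-\infty,\infty)$ and leaves the non-vanishing of $\nabla f$ along the trajectory implicit, whereas you work on the maximal interval of existence $(T_-,T_+)$ and rule out hitting a critical point explicitly via the uniqueness theorem.
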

\begin{proof}
    If $\pmb{\theta}(0)$ is a fixed point then for the gradient ascent dynamics $\pmb{\theta}(t) = \pmb{\theta}(0)$ and therefore the Theorem holds trivially. On the other hand, in Theorem \ref{theorem:reparametrization} we argued that $f(\pmb{\theta}(t))$ is a continuous and strictly increasing function so it should map $(-\infty, \infty)$ to an open set and thus the theorem holds. Obviously we can prove an equivalent theorem for $g$.
\end{proof}
\clearpage
\shadowbox{
\begin{minipage}[c]{5in}
Having established the informational equivalence between the parameter and functional space, 
we are ready to derive the induced dynamics of the distribution with which two players participate into the game.
\end{minipage}
}
\begin{lemma}[Restated \lemref{lemma:functional-dynamics}]\label{restated-lemma:functional-dynamics}
If $\pmb{\theta}(t)$ and $\pmb{\phi}(t)$ are solutions to Equation \ref{eq:eq_gda} with initial conditions $(\pmb{\theta}(0), \pmb{\phi}(0))$, then we have that $f(t) = f(\pmb{\theta}(t))$ and $g(t) = g(\pmb{\phi}(t))$ satisfy the following equations
\begin{equation*}
 \begin{aligned} 
    &\dot{f} = -v \norm{\nabla f(X_{\pmb{\theta}(0)} (f))}^2 (g- q)\\
    &\dot{g} = v \norm{\nabla g(X_{\pmb{\phi}(0)} (g))}^2 (f- p)
\end{aligned}  
\end{equation*}
\end{lemma}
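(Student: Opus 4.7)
The plan is to obtain the reduced planar dynamics by a direct application of the chain rule to the CGDA system, followed by substitution via the reparametrization result from Theorem \ref{theorem:reparametrization}. First, I would differentiate the composition $f(t) = f(\pmb{\theta}(t))$ with respect to $t$, giving
\begin{equation*}
\dot{f} = \nabla f(\pmb{\theta}(t))^\top \dot{\pmb{\theta}}(t),
\end{equation*}
and then plug in the CGDA equation for $\dot{\pmb{\theta}}$ from Equation \ref{eq:eq_gda}. The factor $-v(g(\pmb{\phi}(t)) - q)$ is a scalar, so it pulls out of the inner product, and the remaining $\nabla f(\pmb{\theta})^\top \nabla f(\pmb{\theta})$ collapses to $\|\nabla f(\pmb{\theta}(t))\|^2$. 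The symmetric calculation for $g$ is identical with the sign and the roles of $(f,p)$ and $(g,q)$ exchanged.

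Second, I would invoke Theorem \ref{theorem:reparametrization} to replace the parameter vector $\pmb{\theta}(t)$ by its functional avatar $X_{\pmb{\theta}(0)}(f(t))$, and likewise $\pmb{\phi}(t)$ by $X_{\pmb{\phi}(0)}(g(t))$. Substituting these identities inside $\nabla f$ and $\nabla g$ yields exactly the right-hand sides in the statement. This substitution is legitimate because Theorem \ref{theorem:reparametrization} guarantees $X_{\pmb{\theta}(0)}$ is defined on the range $f_{\pmb{\theta}(0)}$, which by construction contains $f(t)$ for all $t$ along the trajectory.

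There is no real obstacle here: the lemma is essentially a bookkeeping statement that repackages the chain-rule identity using the bijective correspondence between $\pmb{\theta}$ and $f$ (and between $\pmb{\phi}$ and $g$) established in the preceding theorem. The only subtlety worth flagging is that the substitution of $\pmb{\theta} = X_{\pmb{\theta}(0)}(f)$ requires either that $\pmb{\theta}(0)$ is not a stationary point of $f$ (in which case $X_{\pmb{\theta}(0)}$ is a genuine $C^1$ reparametrization by Theorem \ref{theorem:reparametrization} and Lemma \ref{lemma:interval}), or that $\pmb{\theta}(0)$ is stationary, in which case both sides of the $\dot{f}$ equation vanish trivially and the identity holds as $0=0$. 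The analogous dichotomy covers $\pmb{\phi}$, completing the reduction to the planar system in $(f,g)$.
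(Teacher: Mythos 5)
Your proof is correct and follows essentially the same route as the paper: chain rule on $f(\pmb{\theta}(t))$ and $g(\pmb{\phi}(t))$, substitution of the CGDA vector field so that the scalar factor pulls out and $\nabla f^\top \nabla f$ collapses to $\norm{\nabla f}^2$, and then replacement of $\pmb{\theta}(t)$, $\pmb{\phi}(t)$ by $X_{\pmb{\theta}(0)}(f(t))$, $X_{\pmb{\phi}(0)}(g(t))$ via Theorem \ref{theorem:reparametrization}. Your explicit handling of the stationary-initialization dichotomy is a small addition the paper absorbs into the proof of Theorem \ref{theorem:reparametrization} itself, but it changes nothing of substance.
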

\begin{proof}
Applying chain rule and the definition of Continuous GDA (\eqref{eq:eq_gda})  we can see that :
\[
\begin{Bmatrix}    
    \dot{f} &=& \nabla f ( \pmb{\theta}(t) ) \dot{\pmb{\theta}} (t)\\ 
    \dot{g} &=& \nabla g ( \pmb{\phi}(t) ) \dot{\pmb{\phi}} (t)
\end{Bmatrix}
\Leftrightarrow
\begin{Bmatrix}    
    \dot{f} &=& -v  \norm{\nabla f ( \pmb{\theta} (t) ) }_2^2 \left( g( \pmb{\phi}(t) ) - q \right)\\
    \dot{g} &=& v  \norm{\nabla g ( \pmb{\phi} (t) ) }_2^2 \left( f( \pmb{\theta}(t) ) - p \right)
\end{Bmatrix}
\]
Finally using \theoref{theorem:reparametrization} we get:
\[
\begin{Bmatrix}    
    \dot{f} &=& -v \norm{\nabla f ( X_{\pmb{\theta}(0)} (f(t)) ) }_2^2 &\left( g( \pmb{\phi}(t) ) - q \right)\\
    \dot{g} &=& v \norm{\nabla g ( X_{\pmb{\phi}(0)} ( g(t) ) ) }_2^2 &\left( f( \pmb{\theta}(t) ) - p \right)
\end{Bmatrix}
\]
\end{proof}
\shadowbox{
\begin{minipage}[c]{5in}
Finally, we establish that the above 2-dimensional system that couples $f, g$ together is akin to a conservative system that preserves an energy-like function. Under the safety conditions, the proposed invariant is both well-defined and equipped with interesting properties. It is easy to check that it can play the role of a pseudometric around the Nash Equilibrium of the hidden bilinear game.
\end{minipage}
}
\begin{theorem}[Restated \theoref{theorem:invariant}] \label{restated-theorem:invariant}
Let $\pmb{\theta}(0)$ and $\pmb{\phi}(0)$ be safe initial conditions. Then for the system of Equation \ref{eq:eq_gda}, the following quantity is time-invariant
\begin{equation*}
    H(f,g) = \int_p^f \frac{z-p}{\norm{\nabla f(X_{\pmb{\theta}(0)} (z))}^2} \mathrm{d}z + \int_q^g \frac{z-q}{\norm{\nabla g(X_{\pmb{\phi}(0)} (z))}^2} \mathrm{d}z
\end{equation*}
\end{theorem}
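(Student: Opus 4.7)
The plan is to show that $H(f(t), g(t))$ has zero time derivative along solutions of the GDA dynamics, using the planar system of \lemref{lemma:functional-dynamics}. First I would verify that $H$ is well-defined under the safety hypothesis: by \defref{definition:safe-conditions}, $p\in f_{\pmb{\theta}(0)}$ and $q\in g_{\pmb{\phi}(0)}$, so the integration endpoints $p, f$ and $q, g$ lie in the domains of $X_{\pmb{\theta}(0)}$ and $X_{\pmb{\phi}(0)}$ respectively (which, by \lemref{lemma:interval}, are open intervals). Moreover, since $\pmb{\theta}(0)$ is not stationary for $f$, $f$ is strictly monotone along the underlying gradient-ascent curve (as used in \theoref{theorem:reparametrization}), so $\nabla f(X_{\pmb{\theta}(0)}(z))\neq 0$ for every $z$ in the relevant interval; hence the integrand is continuous and the integral makes sense. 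The analogous statement holds for $g$.

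Next I would invoke the fundamental theorem of calculus to compute the partial derivatives
\begin{equation*}
\frac{\partial H}{\partial f} \;=\; \frac{f-p}{\norm{\nabla f(X_{\pmb{\theta}(0)}(f))}^2},
\qquad
\frac{\partial H}{\partial g} \;=\; \frac{g-q}{\norm{\nabla g(X_{\pmb{\phi}(0)}(g))}^2}.
\end{equation*}
By the chain rule together with the reduced dynamics in \lemref{lemma:functional-dynamics},
\begin{equation*}
\frac{d H}{dt} \;=\; \frac{\partial H}{\partial f}\,\dot f \,+\, \frac{\partial H}{\partial g}\,\dot g \;=\; -v(f-p)(g-q) \,+\, v(g-q)(f-p) \;=\; 0,
\end{equation*}
so $H(f(t), g(t))$ is constant along trajectories, as claimed. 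Since $(\pmb{\theta}(t),\pmb{\phi}(t))$ determine $(f(t), g(t))$ by composition with $f, g$, this establishes the time invariance of $H$ along the original flow of Equation~\ref{eq:eq_gda}.

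The only nontrivial step is the well-definedness, which is really the role that the ``safety'' hypothesis plays; once that is in place, the conservation is a one-line cancellation of the two cross-terms. One mild subtlety to flag is that the trajectory of $f(t)$ must remain inside $f_{\pmb{\theta}(0)}$ (and similarly for $g$) for all $t$, so that $X_{\pmb{\theta}(0)}(f(t))$ is defined for all $t$; this is built into \theoref{theorem:reparametrization}, which guarantees $\pmb{\theta}(t)=X_{\pmb{\theta}(0)}(f(t))$ globally in time, and thus poses no additional obstacle.
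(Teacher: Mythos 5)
Your proposal is correct and follows essentially the same route as the paper: differentiate $H(f(t),g(t))$, apply the fundamental theorem of calculus and the reduced planar dynamics of \lemref{lemma:functional-dynamics}, and observe that the two cross-terms $\mp v(f-p)(g-q)$ cancel. Your treatment of well-definedness under the safety hypothesis is in fact more explicit than the paper's, which only remarks on it in passing.
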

\begin{proof}
Firstly, one should notice that since $\pmb{\theta}(0)$ and $\pmb{\phi}(0)$  are safe initial conditions, $H(f,g)$ is well defined  when $f,g$ follows the dynamics Continuous-GDA.
We will examine the derivative of the proposed invariant of motion.
\begin{align*}
\derivativeclosed{H(f(t),g(t))} &=&
\derivativeclosed{
\int_p^{f(t)} \frac{z-p}{\norm{\nabla f(X_{\pmb{\theta}(0)} (z))}^2} \mathrm{d}z}+ 
\derivativeclosed{
\int_q^{g(t)} \frac{z-q}{\norm{\nabla g(X_{\pmb{\phi}(0)} (z))}^2} \mathrm{d}z}\\
&=&
\derivativeclosed{f(t)}\times \frac{f(t)-p}{\norm{\nabla f(X_{\pmb{\theta}(0)} (f(t)))}^2}+
\derivativeclosed{g(t)}\times \frac{g(t)-q}{\norm{\nabla g(X_{\pmb{\phi}(0)} (g(t)))}^2}\\
\end{align*}
Using \theoref{restated-lemma:functional-dynamics}, we get
\begin{align*}
\derivativeclosed{H(f(t),g(t))} =
&-v \norm{\nabla f ( X_{\pmb{\theta}(0)} (f(t)) ) }_2^2 \left( g( \pmb{\phi}(t) ) - q \right)\times\frac{f(t)-p}{\norm{\nabla f(X_{\pmb{\theta}(0)} (f(t)))}^2}+\\
&v \norm{\nabla g ( X_{\pmb{\phi}(0)} ( g(t) ) ) }_2^2 \left( f( \pmb{\theta}(t) ) - p \right)\times \frac{g(t)-q}{\norm{\nabla g(X_{\pmb{\phi}(0)} (g(t)))}^2}\\
&=  -v({f(t)-p})({g(t)-q})+v({f(t)-p})({g(t)-q})= 0
\end{align*}
\end{proof}
\clearpage
\shadowbox{
\begin{minipage}[c]{5in}
Using the existence of the invariant function for the safe initial conditions, we will prove that the trajectory of the planar dynamical system stays bounded away from all possible fixed points. Therefore the limit behavior must be a cycle. We can also prove that the system does not just converge to a periodic orbit but it actually lies on the periodic trajectory from the very beginning. The key intuition that allows us to do this is that the level sets of $H$ are one-dimensional manifolds. To get convergence to a periodic orbit, one would require two orbits (the initial trajectory and the periodic orbit) to merge into the same one dimensional manifold, but this is not possible (requires that no transient part exists).
\end{minipage}
}
\begin{theorem}[Restated \theoref{theorem:orbit}]\label{restated-theorem:orbit}
Let $\pmb{\theta}(0)$ and $\pmb{\phi}(0)$ be safe initial conditions. Then for the system of Equation \ref{eq:eq_gda}, the orbit $(\pmb{\theta}(t), \pmb{\phi}(t))$ is periodic.
\end{theorem}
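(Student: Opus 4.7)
The plan is to reduce the $4$-dimensional system on $(\pmb{\theta},\pmb{\phi})$ to the planar system on $(f,g)$ of Lemma~\ref{restated-lemma:functional-dynamics}, prove periodicity there via the Poincar\'{e}-Bendixson theorem together with the conservation law of Theorem~\ref{restated-theorem:invariant}, and then lift back using Theorem~\ref{restated-theorem:reparametrization}. The lift is essentially free: since $\pmb{\theta}(t)=X_{\pmb{\theta}(0)}(f(t))$ and $\pmb{\phi}(t)=X_{\pmb{\phi}(0)}(g(t))$ with $X_{\pmb{\theta}(0)},X_{\pmb{\phi}(0)}$ continuous, any period of $(f,g)$ is also a period of $(\pmb{\theta},\pmb{\phi})$. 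So the entire argument happens on the plane.

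On the planar side, safety together with Lemma~\ref{lemma:interval} guarantees that $f_{\pmb{\theta}(0)}$ and $g_{\pmb{\phi}(0)}$ are open intervals containing $p$ and $q$ respectively, and that the weights $\norm{\nabla f(X_{\pmb{\theta}(0)}(f))}^2$ and $\norm{\nabla g(X_{\pmb{\phi}(0)}(g))}^2$ are strictly positive on the open rectangle $R := f_{\pmb{\theta}(0)}\times g_{\pmb{\phi}(0)}$ (a gradient-ascent trajectory from a non-stationary point never reaches a stationary point). Reading Equation~\ref{eq:planar} off, the only fixed point of the planar flow inside $R$ is the Nash pair $(p,q)$. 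Writing $c:=H(f(0),g(0))$ and observing that each summand of $H$ is nonnegative and vanishes only at its base point, we get $c>0$ whenever the initial state is not already $(p,q)$; Theorem~\ref{restated-theorem:invariant} then confines the trajectory to the level set $\{H=c\}\cap R$ and keeps it uniformly bounded away from the unique fixed point.

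I would then finish by a Poincar\'{e}-Bendixson argument applied to the connected component $\Gamma$ of $\{H=c\}\cap R$ containing the initial point. The Hessian of $H$ at $(p,q)$ is diagonal with strictly positive diagonal entries $\norm{\nabla f(X_{\pmb{\theta}(0)}(p))}^{-2}$ and $\norm{\nabla g(X_{\pmb{\phi}(0)}(q))}^{-2}$, so $(p,q)$ is a strict local minimum of $H$ and small level sets are Jordan curves around it; since $(p,q)$ is the unique critical point of $H$ in $R$, every $c>0$ is a regular value and $\Gamma$ is a smooth embedded $1$-manifold. Granted that $\Gamma$ is a compact Jordan curve, the Poincar\'{e}-Bendixson theorem applied to the bounded planar orbit, which avoids the only fixed point, forces its $\omega$-limit set to be a periodic orbit contained in $\Gamma$. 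Because distinct orbits of a smooth planar flow partition $\Gamma$ into disjoint arcs and the $\omega$-limit lies in the closure of the trajectory, the trajectory must coincide with that periodic orbit, giving the desired periodicity.

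The main obstacle is ruling out the possibility that $\Gamma$ is a non-compact arc that escapes to the boundary of $R$ rather than a closed loop. I would address this via a continuation argument: strict convexity of $H$ near $(p,q)$ provides small Jordan loops around $(p,q)$ for small $c$, and as $c$ grows these loops deform continuously and can only stop being closed by encountering a critical point of $H$ (impossible, as $(p,q)$ is unique) or by approaching the boundary of $R$. The latter is controlled by the growth of $H$ near $\partial R$: along $X_{\pmb{\theta}(0)},X_{\pmb{\phi}(0)}$ the parameter vector diverges in $\mathbb{R}^n$ as $f$ (or $g$) approaches the boundary of the admissible interval, and the weights $\norm{\nabla f}^2,\norm{\nabla g}^2$ shrink rapidly enough that the integrals defining $H$ diverge (as is transparent for sigmoids, the setting eventually invoked in Theorem~\ref{theorem:sigmoid-poincare-recurrence-behavior}), making every sublevel set of $H$ relatively compact in $R$ and forcing $\Gamma$ to be a Jordan curve.
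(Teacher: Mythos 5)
Your overall architecture coincides with the paper's proof: reduce to the planar $(f,g)$ system of Lemma~\ref{restated-lemma:functional-dynamics}, combine the invariance of $H$ (Theorem~\ref{restated-theorem:invariant}) with the Poincar\'{e}-Bendixson theorem, invoke the regular value theorem to see that the level set is a one-dimensional manifold, rule out a transient approach to the limit cycle, and lift periodicity back through $X_{\pmb{\theta}(0)},X_{\pmb{\phi}(0)}$. Your closing step --- a periodic orbit inside the component $\Gamma$ is open and closed in $\Gamma$, hence equals $\Gamma$, hence contains the initial point --- is in fact a cleaner rendering of the paper's ``handcuffs'' contradiction (trajectory plus $\alpha,\omega$-limit sets cannot jointly form a $1$-manifold).

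The genuine gap is exactly where you locate it: compactness of $\Gamma$, which you reduce to the claim that $H\to\infty$ at $\partial R$. Your justification of that claim is wrong in one respect and incomplete in another. Wrong: the parameter vector need \emph{not} diverge as $f$ approaches an endpoint of $f_{\pmb{\theta}(0)}$ --- generically the gradient-ascent trajectory converges to a finite stationary point $\pmb{\theta}^*$ of $f$. Incomplete: ``the weights shrink rapidly enough'' is precisely what must be proved, and you check it only for sigmoids, whereas Theorem~\ref{theorem:orbit} is stated for arbitrary smooth $f,g$ with safe initializations; a priori a degenerate critical point could let $\norm{\nabla f}^2$ vanish slowly enough that $\int^{b}(z-p)\,\norm{\nabla f(X_{\pmb{\theta}(0)}(z))}^{-2}\,\mathrm{d}z$ converges. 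The claim is nonetheless true in general, and the clean route is the time-reparametrization identity implicit in Theorem~\ref{theorem:reparametrization}: along gradient ascent $\dot{f}=\norm{\nabla f}^2$, so $\int_{f(0)}^{b}\norm{\nabla f(X_{\pmb{\theta}(0)}(z))}^{-2}\,\mathrm{d}z$ is the time needed for gradient ascent to drive $f$ to the endpoint value $b$, and this time is infinite because (i) a stationary point cannot be reached in finite time by uniqueness of solutions of a locally Lipschitz field, and (ii) finite-time escape to infinity is impossible since $f\in[0,1]$ makes $\norm{\nabla f}$ square-integrable in $t$, so Cauchy--Schwarz bounds the arc length on finite time intervals. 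Since safety puts $p$ in the interior of $f_{\pmb{\theta}(0)}$, the factor $z-p$ is bounded away from zero near the endpoints, and $H$ blows up at $\partial R$, making sublevel sets compact as you need. Note that the paper sidesteps this issue entirely: it never asserts that $H$ blows up, but instead runs a case analysis on the fixed points in the closure of the rectangle (showing the corners, where $\nabla f=\nabla g=\mathbf{0}$, are local maxima of $H$ unreachable along the level set) and extracts a fixed-point-free trapping region. Finally, your continuation-in-$c$ paragraph is superfluous once compactness of $\{H\le c\}$ is established: compactness together with the regular value theorem already forces every component of the level set to be a circle.
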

\begin{proof}

If $(\pmb{\theta}(0), \pmb{\phi}(0))$ is a fixed point then it is trivially a periodic point. Suppose $(\pmb{\theta}(0), \pmb{\phi}(0))$ is not a fixed point, then either $f \neq p$ or $g \neq q$ (or both). Given that $H$ is invariant, the trajectory of the planar system stays bounded away from all equilibria. We will examine each case separately:
\paragraph{Equilbria with $f=p$ and $g=q$}
It is bounded away from these since $H(p, q) = 0$ and $H(f(\pmb{\theta}(0)), g(\pmb{\phi}(0)))> 0$.
\paragraph{Equilibria with  $f = p$ and $\nabla f = \mathbf{0}$}
These equilibria are not achievable since they are not allowed by the safety conditions. $\nabla f = \mathbf{0}$ when $f=p$ means that $p$ is one of the endpoints of $f_{\pmb{\theta}(0)}$. But by Lemma \ref{lemma:interval}, $f_{\pmb{\theta}(0)}$ is an open set and $p \in f_{\pmb{\theta}(0)}$ which leads to a contradiction.
\paragraph{Equilibria with  $g = q$ and $\nabla g = \mathbf{0}$}
They are also not feasible due to the safety assumption.
\paragraph{Equilibria with $\nabla f = \mathbf{0}$ and $\nabla g = \mathbf{0}$}
Observe that such points lie in the corners of $ f_{\pmb{\theta}(0)} \times g_{\pmb{\phi}(0)}$. These points correspond to local maxima of the invariant function. We will prove this for one of the corners and the same proof works for all others in the same way. Let $(p^*, q^*)$ be one such corner with both $p^*> p$ and $q^*> q$. Let us take any other point $(r,z)$ with $p^* \geq  r >p$ and $q^* \geq z>q$ but different from $(p,q)$. Without loss of generality let us assume $p^* >  r$. Then in this region $H$ is increasing in both $f$ and $g$. Thus
\begin{equation*}
    H(r, z) < H(p^*, z) \leq H(p^*, q^*)
\end{equation*}
So this corner (and all the other three corners) are local maxima. A continuous trajectory cannot reach these isolated local maxima while maintaining $H$ invariant. 

\begin{figure}[ht!]
    \centering
    \includegraphics[width=\textwidth]{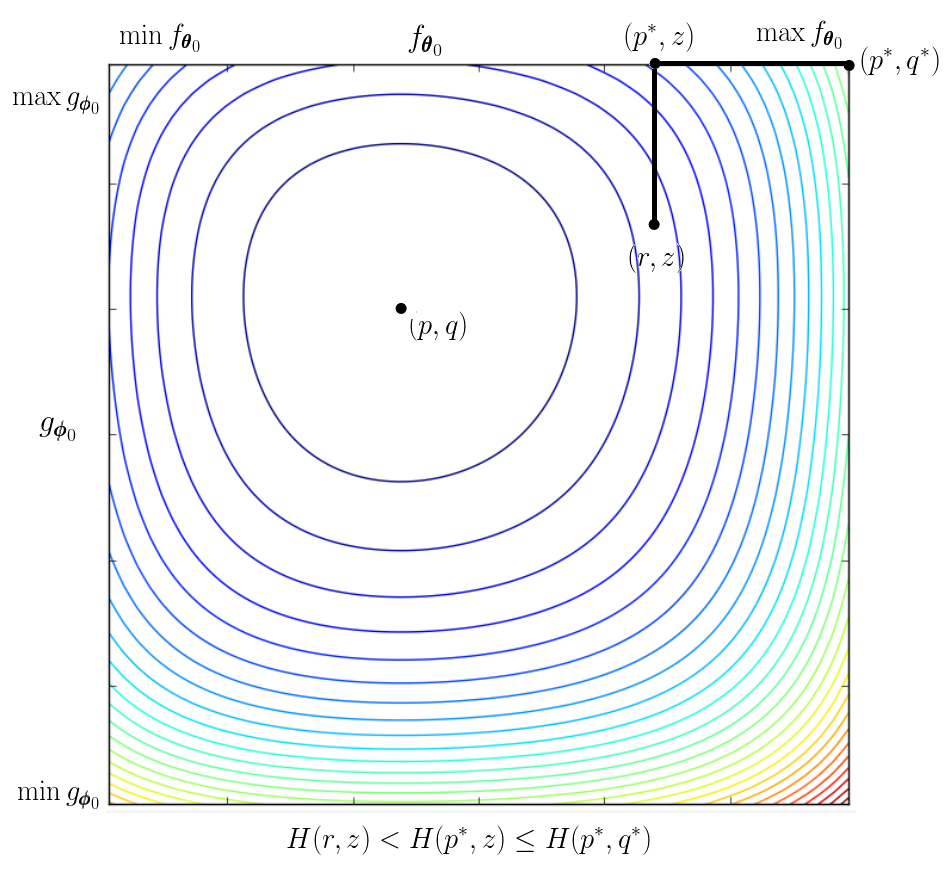}
\end{figure}

Thus we can create a trapping/invariant region $C$ so that $f$ and $g$ always stay in $C$ and $C$ does not contain any fixed points. By the Poincar\'{e}-Bendixson theorem, the $\alpha,\omega$-limit set of the trajectory is a periodic orbit. Thus they 
are isomorphic to $S^1$.

Since the gradient of $H$ is only equal to $0$ at $(p, q)$
\begin{equation*}
    \nabla H = \left( \frac{f-p}{\norm{\nabla f(X_{\pmb{\theta}(0)} (f))}^2}, \frac{g-q}{\norm{\nabla g(X_{\pmb{\phi}(0)} (g))}^2} \right) 
\end{equation*}

Therefore $H(f(\pmb{\theta}(0)), g(\pmb{\phi}(0))) > H(p,q)$ is a regular value of $H$. By the regular value theorem the following set is a one dimensional manifold 
\begin{equation*}
    \{(f,g) \in f_{\pmb{\theta}(0)} \times g_{\pmb{\phi}(0)} : H(f,g) =  H(f(\pmb{\theta}(0)), g(\pmb{\phi}(0))) \}   
\end{equation*}
Notice that by the invariance of $H$ and definition of  $\alpha,\omega-$limit sets of $(f(\pmb{\theta}(0)),g(\pmb{\phi}(0)))$, we know that both the trajectory starting at $(\pmb{\theta}(0),\pmb{\phi}(0))$, along with its $\alpha,\omega-$limit sets belong to the above manifold. Thus, their union is a closed, connected $1-$manifold and thus it is isomorphic to $S^1$.

Assume that the trajectory was merely converging to the $\alpha,\omega-$limit sets. Then our one dimensional manifold is containing two connected one dimensional manifolds: the trajectory of the system as well as the $\alpha,\omega-$limit sets . But one can easily show that this would not be a one dimensional manifold, leading to a contradiction.

\begin{figure}[ht!]
    \centering
    \includegraphics[width=\textwidth]{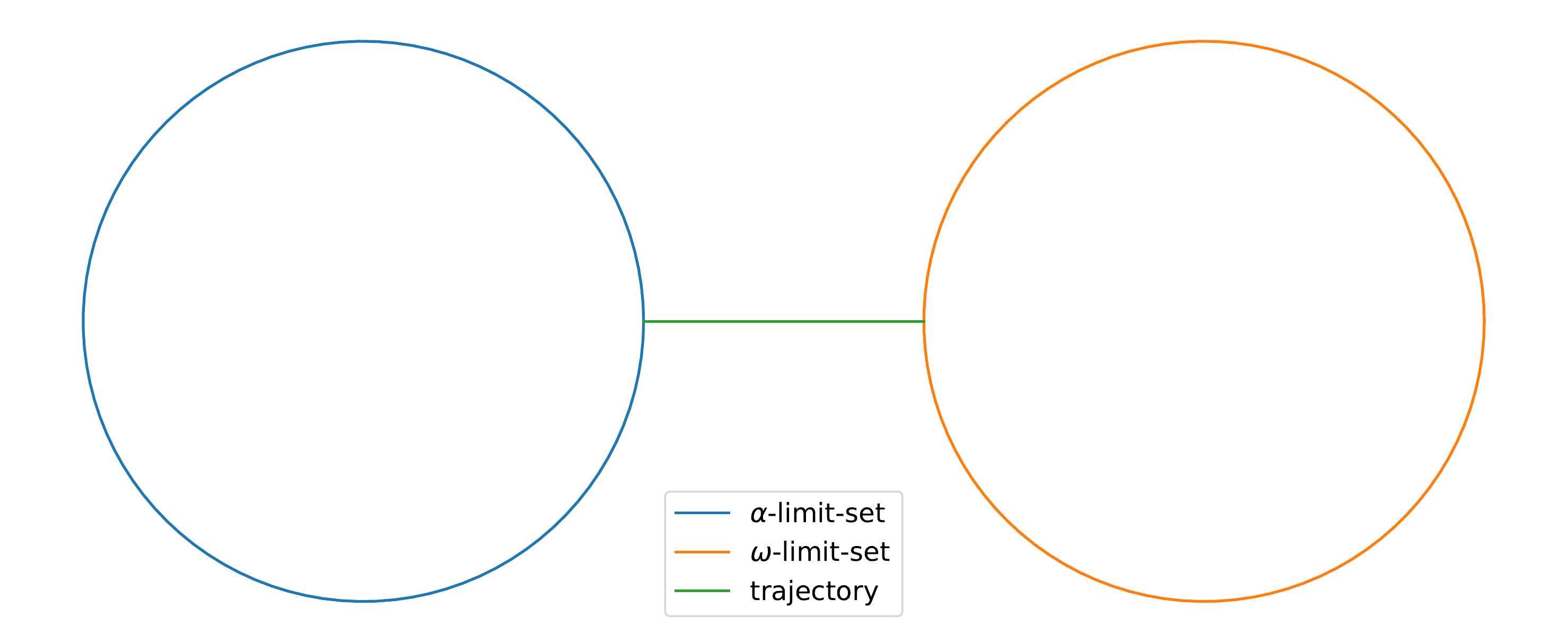}
    \caption{By the Poincar\'{e}-Bendixson theorem we know that both the $\alpha$ and the $\omega$ limit-sets are isomorphic to $S^1$. The trajectory connecting them makes the union of all three parts is not a one dimensional manifold. But by the regular value theorem on $H$, the union of all three parts is also a one dimensional manifold.}
\end{figure}

Up to now we have analyzed the trajectories of the planar dynamical system of $f$ and $g$. But since we have proved that there is one to one correspondence between $\pmb{\theta}$ and $f$ and $\pmb{\phi}$ and $g$, the periodicity claims transfer to  $\pmb{\theta}(t)$ and $\pmb{\phi}(t)$.
\end{proof}
\clearpage

\shadowbox{
\begin{minipage}[c]{5in}
On a positive note, one can prove that the time average of $f$ and $g$ do converge as well as the utilities of the generator and discriminator.
\end{minipage}
}
\begin{theorem}[Restated \theoref{theorem:average}]\label{restated-theorem:average}
Let $\pmb{\theta}(0)$ and $\pmb{\phi}(0)$ be safe initial conditions and $(\pmb{P},\pmb{Q})=\Big(\binom{p}{1-p},\binom{q}{1-q}\Big)$, then for the system of Equation \ref{eq:eq_gda}
\begin{equation*}
    \begin{aligned}
    \lim_{T \to \infty} \frac{\int_{0}^T   f(\pmb{\theta}(t)) \mathrm{d}t } {T} &=& p ,\quad
    \lim_{T \to \infty} \frac{\int_{0}^T  r(\pmb{\theta}(t), \pmb{\phi}(t) ) \mathrm{d}t } {T} &=& \pmb{P}^\top U \pmb{Q}
    ,\quad
    \lim_{T \to \infty} \frac{\int_{0}^T   g(\pmb{\phi}(t)) \mathrm{d}t } {T} &=& q 
    \end{aligned}
\end{equation*}
\end{theorem}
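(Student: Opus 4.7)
The plan is to use the periodicity of the orbit established in Theorem~\ref{theorem:orbit} to reduce each of the three Cesàro-type limits to a computation over one period $T^{*}>0$: for any continuous function $\Phi$ bounded on the (compact) periodic orbit, writing $T = n T^{*} + \tau$ with $\tau \in [0,T^{*})$ gives
\[
\frac{1}{T}\int_0^T \Phi(t)\,\mathrm{d}t \;=\; \frac{n T^{*}}{T}\cdot\frac{1}{T^{*}}\int_0^{T^{*}}\Phi(t)\,\mathrm{d}t + O(1/T),
\]
so it suffices to evaluate each period-average and then take $T\to\infty$.

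For the average of $g$, I would apply Lemma~\ref{lemma:reparametrization} to the $\pmb{\theta}$-subsystem of Equation~\ref{eq:eq_gda} with $k=f$ and $h(t) = -v\bigl(g(\pmb{\phi}(t))-q\bigr)$: the unique solution is $\pmb{\theta}(t) = \gamma_{\pmb{\theta}(0)}\!\bigl(\int_0^t h(s)\,\mathrm{d}s\bigr)$, where $\gamma_{\pmb{\theta}(0)}$ is the gradient-ascent flow of $f$ from $\pmb{\theta}(0)$. Since $\pmb{\theta}(0)$ is safe, it is not a stationary point of $f$, so $f\circ\gamma_{\pmb{\theta}(0)}$ is strictly increasing (as in the proof of Theorem~\ref{theorem:reparametrization}) and $\gamma_{\pmb{\theta}(0)}$ is injective. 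Periodicity $\pmb{\theta}(T^{*}) = \pmb{\theta}(0) = \gamma_{\pmb{\theta}(0)}(0)$ then forces $\int_0^{T^{*}} h(s)\,\mathrm{d}s = 0$, i.e.\ $\int_0^{T^{*}}(g-q)\,\mathrm{d}s = 0$, yielding the average-of-$g$ claim. The symmetric argument applied to the $\pmb{\phi}$-subsystem gives the average-of-$f$ claim.

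For the expected utility, the main obstacle is that $r(\pmb{\theta},\pmb{\phi})$ is \emph{bilinear} (not linear) in $(f,g)$, so the marginal averages do not formally suffice. I would first simplify $r$ algebraically: expanding the $2\times 2$ product and substituting the Nash identities $u_{0,1}-u_{1,1} = -vq$ and $u_{1,0}-u_{1,1} = -vp$ yields
\[
r(\pmb{\theta},\pmb{\phi}) \;=\; v(f-p)(g-q) + \bigl(u_{1,1}-vpq\bigr) \;=\; v(f-p)(g-q) + \pmb{P}^{\top}U\pmb{Q},
\]
so it remains to show that the bilinear correction averages to zero. Using the reduced dynamics (Lemma~\ref{lemma:functional-dynamics}), $(g-q) = -\dot f/\bigl(v\,\|\nabla f(X_{\pmb{\theta}(0)}(f))\|^{2}\bigr)$, whence
\[
(f-p)(g-q) \;=\; -\frac{(f-p)\,\dot f}{v\,\|\nabla f(X_{\pmb{\theta}(0)}(f))\|^{2}} \;=\; -\frac{\mathrm{d}}{\mathrm{d}t}\,\Psi\!\bigl(f(t)\bigr),
\]
where $\Psi(z) := \int_p^z \frac{w-p}{v\,\|\nabla f(X_{\pmb{\theta}(0)}(w))\|^{2}}\,\mathrm{d}w$ is well-defined by safety. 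By periodicity $f(T^{*})=f(0)$, the integral over one period is $\Psi(f(0))-\Psi(f(T^{*}))=0$, and the payoff claim follows by the period-to-limit passage from the first paragraph. The conceptual point is that the cross term $(f-p)(g-q)$ is an \emph{exact} time derivative of a function of $f$ alone along GDA trajectories — an observation in the same spirit as, but weaker than, the conservation law of Theorem~\ref{theorem:invariant}.
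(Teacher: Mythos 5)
Your proof is correct, and it rests on the same two structural pillars as the paper's: the decomposition $r = v(f-p)(g-q) + \pmb{P}^\top U \pmb{Q}$ (the paper derives this as a Fact about fully mixed equilibria; your substitution of $u_{0,1}-u_{1,1}=-vq$ and $u_{1,0}-u_{1,1}=-vp$ is the same computation) and the recognition that the quantities to be averaged are exact time derivatives of functions of $f$ alone along the flow. Where you genuinely diverge is the limiting mechanism. The paper never isolates a period: using the planar dynamics of Lemma~\ref{lemma:functional-dynamics} it rewrites $\frac{1}{T}\int_0^T (g-q)\,\mathrm{d}t$ and $\frac{1}{T}\int_0^T f\,(g-q)\,\mathrm{d}t$ as $\frac{1}{T}\int_{f(0)}^{f(T)}(\cdot)\,\mathrm{d}f$, argues these $f$-integrals are \emph{uniformly bounded} in $T$ because the periodic trajectory stays bounded away from the roots of $\norm{\nabla f}^2$, and divides by $T$. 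You instead reduce via $T = nT^*+\tau$ to single-period averages and show they are \emph{exactly} $p$, $q$ and $\pmb{P}^\top U\pmb{Q}$; in particular, deriving $\int_0^{T^*}(g-q)\,\mathrm{d}s=0$ from injectivity of the gradient-ascent curve $\gamma_{\pmb{\theta}(0)}$ together with $\pmb{\theta}(T^*)=\pmb{\theta}(0)$ is a different and rather elegant use of Lemma~\ref{lemma:reparametrization} that the paper does not make. The trade-off: your route leans harder on Theorem~\ref{theorem:orbit} (you need an actual period, though the degenerate constant orbit at $(p,q)$ is trivially fine), and in exchange yields the stronger intermediate fact that every period-average is exact; the paper's route only needs boundedness of the antiderivatives along the orbit, so it would survive if periodicity were weakened to, say, precompactness of the trajectory away from stationary points. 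One point worth making explicit in your write-up: the well-definedness of $\Psi$ (equivalently, of the paper's change of variables) is not purely a consequence of the safety of the initialization but of the invariance of $H$, which confines $f(t)$ to a compact subinterval of the open interval $f_{\pmb{\theta}(0)}$ on which $\norm{\nabla f(X_{\pmb{\theta}(0)}(z))}$ is nonvanishing; your phrase ``well-defined by safety'' is silently doing that work.
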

\begin{proof}
In Theorem \theoref{theorem:orbit} we have discussed that the safety of the initial conditions guarantees that stationary points of $f$ and $g$ are going to be avoided. So using \lemref{lemma:functional-dynamics}, we can integrate the following quantities over a time interval $[0,T]$ and divide by $T$.

\begin{align*}
 \frac{1}{T} \displaystyle\int_0^T\dfrac{1}{v \norm{\nabla f ( X_{\pmb{\theta}(0)} (f(t)) ) }^2}\derivative{f} \mathrm{d}t &= - \frac{1}{T} \int_0^T \left(g( \pmb{\phi}(t) ) - q \right) \mathrm{d}t\\
 \frac{1}{T}\displaystyle\int_0^T\dfrac{1}{v \norm{\nabla g ( X_{\pmb{\phi}(0)} ( g(t) ) ) }^2 }\derivative{g} \mathrm{d}t &= \frac{1}{T} \int_0^T \left( f( \pmb{\theta}(t) ) - p \right)   \mathrm{d}t
\end{align*}    
Let us define the follwoing functions of $f$ and $g$:
\begin{align*}
    \mathbb{F}(f(t))=v \norm{\nabla f ( X_{\pmb{\theta}(0)} (f(t)) ) }^2\\
    \mathbb{G}(g(t))=v \norm{\nabla g ( X_{\pmb{\phi}(0)} ( g(t) ) ) }^2
\end{align*}
Thus the above dynamical system is equivalent with:
\begin{align*}
    \frac{1}{T} \displaystyle\int_0^T\dfrac{1}{\mathbb{F}(f(t))}\derivative{f} \mathrm{d}t &= - \frac{1}{T}\displaystyle\displaystyle\int_0^T \left( g( \pmb{\phi}(t) ) - q \right)\mathrm{d}t\\
    \frac{1}{T} \displaystyle\int_0^T\dfrac{1}{\mathbb{G}(g(t))}\derivative{g} \mathrm{d}t &=
    \frac{1}{T} \displaystyle\displaystyle\int_0^T \left( f( \pmb{\theta}(t) ) - p \right)\mathrm{d}t\\
\end{align*}
 
 However, by a simple change of variables we have that :
\begin{align*}
 \displaystyle\int_0^T \frac{1}{\mathbb{F}(f)}\frac{df}{dt} dt=  \int_{f(0)}^{f(T)}\frac{1}{\mathbb{F}(f)}df \\
 \displaystyle\int_0^T \frac{1}{\mathbb{G}(g)}\frac{dg}{dt} dt=  \int_{g(0)}^{g(T)}\frac{1}{\mathbb{G}(g)}dg
\end{align*}
However we know that $f(t),g(t)$ for our dynamical system are periodic and bounded away from the roots of $\mathbb{F}(f),\mathbb{G}(g)$. So their integrals over a single period of $f$ and $g$ are bounded and we have that
\begin{align*}
    \displaystyle\lim_{T\rightarrow \infty} \frac{1}{T} \displaystyle\int_0^T \frac{1}{\mathbb{F}(f)}\derivative{f} dt= \displaystyle\lim_{T\rightarrow \infty} \frac{1}{T} \displaystyle\int_{f(0)}^{f(T)}\frac{1}{\mathbb{F}(f)}df =0 \\
 \displaystyle\lim_{T\rightarrow \infty} \frac{1}{T} \displaystyle\int_0^T \frac{1}{\mathbb{G}(g)}\derivative{g} dt = \displaystyle\lim_{T\rightarrow \infty} \frac{1}{T} \displaystyle\int_{g(0)}^{g(T)}\frac{1}{\mathbb{G}(g)}dg =0
\end{align*}
Therefore, 
\begin{align*}
  \displaystyle\lim_{T\rightarrow \infty}\frac{1}{T} \displaystyle\displaystyle\int_0^T \left( g( \pmb{\phi}(t)) -q )  \right)\mathrm{d}t = 0\\
  \displaystyle\lim_{T\rightarrow \infty}\frac{1}{T} \displaystyle\displaystyle\int_0^T \left( f( \pmb{\theta}(t)) -p )  \right)\mathrm{d}t = 0\\  
\end{align*}
which implies
\begin{align*}
\displaystyle\lim_{T\rightarrow \infty} \frac{\displaystyle\int_0^T g(\pmb{\phi} (t)) dt}{T}= q  \quad
\displaystyle\lim_{T\rightarrow \infty} \frac{\displaystyle\int_0^T f(\pmb{\theta} (t) ) dt}{T}= p
\end{align*}
Next, we will proceed with the argument about the time average of the objective function. 

\begin{fact*} If $(\pmb{P},\pmb{Q})$ is fully mixed Nash Equilibrium, then it holds
\begin{align*}
    &\pmb{P}^\top U \pmb{G} (\pmb{\phi}(t)) = \pmb{F}(\pmb{\theta}(t))^\top U \pmb{Q} =  \pmb{P}^\top U \pmb{Q} \\
    &(\pmb{F}(\pmb{\theta}(t))-\pmb{P})^\top  U (\pmb{G}(\pmb{\phi}(t))- \pmb{Q})={\pmb{F}(\pmb{\theta}(t))}^\top  U \pmb{G}(\pmb{\phi}(t)) - {\pmb{P}}^\top  U \pmb{Q}
\end{align*}
\end{fact*}
\begin{proof}
It suffices to prove the first part of the claim, since the second part is its immediate 
consequence. 
Since we have conditioned that $(\pmb{P},\pmb{Q})$ is a fully mixed Nash Equilibrium, it holds : 
\begin{equation*}
   \pmb{P}^\top U \pmb{Q} =  \binom{1}{0}^\top U \pmb{Q} = \binom{0}{1}^\top U \pmb{Q}  
\end{equation*}
Therefore:  
\begin{equation*}
    \pmb{F} (\pmb{\theta})^\top U \pmb{Q} = f(\pmb{\theta}) \binom{1}{0}^\top U \pmb{Q} + (1-f(\pmb{\theta})) \binom{0}{1}^\top U \pmb{Q} = \pmb{P}^\top U \pmb{Q} 
\end{equation*}
Symmetrically, it holds :
\begin{equation*}
    \pmb{P}^\top U \pmb{Q} =  \pmb{P}^\top U \binom{1}{0} = \pmb{P}^\top U \binom{0}{1}.
\end{equation*}
Therefore 
\begin{equation*}
    \pmb{P}^\top U \pmb{Q}= \pmb{P}^\top U \binom{1}{0} g(\pmb{\phi}(t)) + \pmb{P}^\top U\binom{0}{1} (1-g(\pmb{\phi}(t))) = \pmb{P}^\top U G (\pmb{\phi}(t)).
\end{equation*}
\end{proof}
Observe the following fact:
\begin{align*}
    \frac{1}{T}\int_0^T   {\pmb{F}(\pmb{\theta}(t)})^\top  U \pmb{G}(\pmb{\phi}(t))\mathrm{d}t - {\pmb{P}}^\top  U \pmb{Q} &= \frac{1}{T}\displaystyle\int_0^T   {\pmb{F}(\pmb{\theta}(t)})^\top  U \pmb{G}(\pmb{\phi}(t))\mathrm{d}t -\frac{1}{T}\int_0^T   {\pmb{P}}^\top  U \pmb{Q}\mathrm{d}t\\
    &= \frac{1}{T}\int_0^T  ({\pmb{F}(\pmb{\theta}(t))-\pmb{P}})^\top  U (\pmb{G}(\pmb{\phi}(t))- \pmb{Q}) \mathrm{d}t 
\end{align*}
Therefore it suffices to show that
\begin{equation*}
    \lim_{T\rightarrow \infty}\frac{1}{T}\int_0^T  ({\pmb{F}(\pmb{\theta}(t))-\pmb{P}})^\top  U (\pmb{G}(\pmb{\phi}(t)- \pmb{Q}) \mathrm{d}t =0
\end{equation*}
The payoff matrix $U$ is as follows:
$$
U =
 \begin{pmatrix}
  u_{0,0} & u_{1,0} \\
  u_{1,0} & u_{1,1}
 \end{pmatrix}
 $$ 
We have that 
\begin{equation*}
  ({\pmb{F}(\pmb{\theta}(t))-\pmb{P}})^\top  U (\pmb{G}(\pmb{\phi}(t))- \pmb{Q})= (u_{0,0}-u_{1,0}-u_{1,0}+u_{1,1})(f(\pmb{\theta}(t))-p))(g(\pmb{\phi}(t))-q). 
\end{equation*}
 Therefore it suffices to show that :
\begin{equation*}
    \lim_{T\rightarrow \infty}\frac{1}{T}\int_0^T  (f(\pmb{\theta}(t)) -p))(g(\pmb{\phi}(t))-q) \mathrm{d}t =0.
\end{equation*}
 
By our previous analysis in this theorem, we have already argued that 
\begin{equation*}
    \lim_{T\rightarrow \infty}\frac{1}{T}\int_0^T  (g(\pmb{\phi}(t))-q) \mathrm{d}t =0
\end{equation*}
thus we only have to show that  
\begin{equation*}
    \lim_{T\rightarrow \infty}\frac{1}{T}\int_0^T  f(\pmb{\theta}(t)) (g(\pmb{\phi}(t))-q) \mathrm{d}t =0
\end{equation*}
Revisiting the equations of Lemma \ref{lemma:functional-dynamics}:
\begin{align*}
    \frac{f}{\mathbb{F}(f)} \frac{df}{\mathrm{d}t} &= f(\pmb{\theta}(t))(g(\pmb{\phi}(t))-q) \Rightarrow \\
    \frac{1}{T} \int_0^T   \frac{f}{\mathbb{F}(f)} \frac{df}{\mathrm{d}t} \mathrm{d}t &=\frac{1}{T}\int_0^T  f(\pmb{\theta}(t)) (g(\pmb{\phi}(t))-q) \mathrm{d}t
\end{align*}
However using similar arguments as before we can prove that
\begin{equation*}
   \lim_{T\rightarrow \infty} \frac{1}{T} \int_0^T   \frac{f}{\mathbb{F}(f)} \frac{df}{\mathrm{d}t} \mathrm{d}t =  \lim_{T\rightarrow \infty}\frac{1}{T} \displaystyle\int_{f(0)}^{f(T)} \frac{f}{\mathbb{F}(f)}df=0
\end{equation*}
implying  that
\begin{equation*}
   \lim_{T\rightarrow \infty} \frac{1}{T}\int_0^T  f(\pmb{\theta}(t)) (g(\pmb{\phi}(t))-q) \mathrm{d}t = 0
\end{equation*}
which completes the proof.
\end{proof}

\clearpage
\section[Omitted Proofs of Section \ref{section:general}: Poincar\'{e} recurrence in hidden bilinear games with more strategies]{Omitted Proofs of Section \ref{section:general}\\ Poincar\'{e} recurrence in hidden bilinear games with more strategies}
\begin{lemma}[Restated \lemref{lemma:functional-dynamics-multi}]
\label{restated-lemma:functional-dynamics-multi}
If $\pmb{\theta}(t)$ and $\pmb{\phi}(t)$ are solutions to Equation \ref{eq:eq_gda_multi} with initial conditions $(\pmb{\theta}(0), \pmb{\phi}(0),\lambda(0),\mu(0))$, then we have that $f_i(t) = f_i(\pmb{\theta}_i(t))$ and $g_j(t) = g_j(\pmb{\phi}_j(t))$ satisfy the following equations
\begin{equation*}
\begin{aligned}
    \dot{f_i} = -\norm{\nabla f_i(X_{\pmb{\theta}_i(0)} (f_i))}^2 \left(\sum_{j=1}^M u_{i,j} g_j + \lambda \right)\\
    \dot{g_j} =  \norm{\nabla g_j(X_{\pmb{\phi}_j(0)} (g_j))}^2 \left(\sum_{i=1}^N u_{i,j}f_i + \mu \right)
\end{aligned}    
\end{equation*}
\end{lemma}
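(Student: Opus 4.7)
The plan is to mirror the two-strategy argument of Lemma~\ref{lemma:functional-dynamics} block-by-block. First I would establish the multi-block analogue of Theorem~\ref{theorem:reparametrization}: for each $i$ the dynamics for $\pmb{\theta}_i$ in Equation~\ref{eq:eq_gda_multi} have the form $\dot{\pmb{\theta}}_i = h_i(t)\,\nabla f_i(\pmb{\theta}_i)$ with the scalar driver
\begin{equation*}
h_i(t) = -\Big(\textstyle\sum_{j=1}^M u_{i,j}\,g_j(\pmb{\phi}_j(t)) + \lambda(t)\Big),
\end{equation*}
and similarly $\dot{\pmb{\phi}}_j = \tilde h_j(t)\,\nabla g_j(\pmb{\phi}_j)$. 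This is precisely the non-autonomous form handled by Lemma~\ref{lemma:reparametrization}, so its solution is a time reparametrization of the pure gradient-ascent trajectory of $f_i$ starting at $\pmb{\theta}_i(0)$.

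Next I would repeat verbatim the argument already used in the proof of Theorem~\ref{theorem:reparametrization}: along the gradient-ascent flow of $f_i$, one has $\dot f_i = \|\nabla f_i\|^2 \ge 0$, with strict monotonicity off the stationary set, so $f_i$ is a proper coordinate along the trajectory. Inverting this map produces a $C^1$ function $X_{\pmb{\theta}_i(0)} : f_{i,\pmb{\theta}_i(0)} \to \mathbb{R}^{\dim \pmb{\theta}_i}$ with $\pmb{\theta}_i(t) = X_{\pmb{\theta}_i(0)}(f_i(t))$. The construction is block-diagonal because the coupling between different $\pmb{\theta}_i$'s and between $\pmb{\theta}_i$ and the $\pmb{\phi}_j$'s enters only through the scalar multiplier $h_i(t)$, which the reparametrization lemma absorbs into the time change. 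The symmetric construction yields $X_{\pmb{\phi}_j(0)}$ with $\pmb{\phi}_j(t) = X_{\pmb{\phi}_j(0)}(g_j(t))$.

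With these reparametrizations in hand, the lemma itself follows by a one-line chain-rule computation: differentiating $f_i(t) = f_i(\pmb{\theta}_i(t))$ and substituting the dynamics gives
\begin{equation*}
\dot f_i = \nabla f_i(\pmb{\theta}_i)^{\top}\dot{\pmb{\theta}}_i = -\|\nabla f_i(\pmb{\theta}_i(t))\|^2\Big(\textstyle\sum_{j=1}^M u_{i,j}\,g_j + \lambda\Big),
\end{equation*}
and then replacing $\pmb{\theta}_i(t)$ by $X_{\pmb{\theta}_i(0)}(f_i)$ produces the stated formula; the computation for $\dot g_j$ is identical up to sign.

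The only substantive point to check — and what I expect to be the main (mild) obstacle — is the well-definedness of the reparametrization when $\pmb{\theta}_i(0)$ is a stationary point of $f_i$ or when $h_i(t)$ changes sign. If $\nabla f_i(\pmb{\theta}_i(0)) = 0$ then $\pmb{\theta}_i(t)\equiv \pmb{\theta}_i(0)$ by uniqueness, so $X_{\pmb{\theta}_i(0)}$ is a constant map and the formula holds trivially. When $h_i(t)$ oscillates in sign the solution simply runs back and forth along the same one-dimensional gradient-ascent curve, and because the map from curve parameter to $f_i$ value is a homeomorphism onto the open interval $f_{i,\pmb{\theta}_i(0)}$ (by Lemma~\ref{lemma:interval}), the identity $\pmb{\theta}_i(t) = X_{\pmb{\theta}_i(0)}(f_i(t))$ remains valid for all $t$. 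This completes the reduction to the closed scalar system in Equation~\ref{eq:f_and_g_mult}.
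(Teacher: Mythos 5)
Your proposal is correct and follows essentially the same route as the paper: the blockwise application of Lemma~\ref{lemma:reparametrization} (via the scalar drivers $h_i(t)$) to obtain the maps $X_{\pmb{\theta}_i(0)}$, $X_{\pmb{\phi}_j(0)}$ as in Theorem~\ref{theorem:reparametrization}, followed by the chain-rule substitution of the dynamics of Equation~\ref{eq:eq_gda_multi}. Your additional remarks on stationary initializations and sign changes of $h_i$ are sound and merely make explicit what the paper's proof of Theorem~\ref{theorem:reparametrization} already handles implicitly.
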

\begin{proof}
Applying chain rule  we can see that :
\begin{align*}
    \forall i \in [N] : \dot{f_i} &= \nabla f_i ( \pmb{\theta}_i(t) ) \dot{\pmb{\theta}_i} (t) \\
    \forall j \in [M] : \dot{g_j} &= \nabla g_j ( \pmb{\phi}_j(t) ) \dot{\pmb{\phi}_j} (t) 
\end{align*}
Then by the dynamics of Continuous GDA (\eqref{eq:eq_gda})
\begin{align*}
    \forall i \in [N] : \dot{f_i} &= \nabla f_i ( \pmb{\theta}_i(t) ) \left(
    - \nabla f_i(\pmb{\theta}_i) \left(\sum_{j=1}^M u_{i,j}g_j(\pmb{\phi}_j) + \lambda \right)
    \right) \\
    \forall j \in [M] : \dot{g_j} &= \nabla g_j ( \pmb{\phi}_j(t) ) \left(
     \nabla g_j(\pmb{\phi}_j) \left(\sum_{i=1}^N u_{i,j}f_i(\pmb{\theta}_i) + \mu \right) \right)\\
\end{align*}
Clearly
\begin{align*}
    \forall i \in [N] : \dot{f_i} &= -\norm{\nabla f_i ( \pmb{\theta}_i(t) )}^2 \left(\displaystyle\sum_{j=1}^M u_{i,j}g_j(\pmb{\phi}_j) + \lambda \right)    \\
    \forall j \in [M] : \dot{g_j} &= \norm{\nabla g_j ( \pmb{\phi}_j(t) )}^2  \left(\displaystyle\sum_{i=1}^N u_{i,j}f_i(\pmb{\theta}_i) + \mu \right)     
\end{align*}

Finally using \theoref{theorem:reparametrization} we know that there exist $N+M$ functions such that :
\begin{align*}
   \pmb{\theta}_i(t) &= X_{\pmb{\theta}_i(0)} (f_i (t))\\
\pmb{\phi}_j(t) &= X_{\pmb{\phi}_j(0)} (g_j (t))
\end{align*}  
 Combining the last two expressions we get the desired claim.
\end{proof}

\begin{theorem}[Restated \theoref{theorem:invariant_multi}]\label{restated-theorem:invariant_multi}
Assume that $(\pmb{\theta}(0),\pmb{\phi}(0), \lambda(0), \mu(0))$ is a safe initialization. Then there exist $\lambda_*$ and $\mu_*$ such that the following quantity is time invariant:
\begin{align*}
    H(\pmb{F}, \pmb{G}, \lambda, \mu) = &\sum_{i=1}^N \int_{p_i}^{f_i} \frac{z-p_i}{\norm{\nabla f_i(X_{\pmb{\theta}_i(0)} (z))}^2} \mathrm{d}z +\sum_{j=1}^M \int_{q_j}^{g_j} \frac{z-q_j}{\norm{\nabla g_j(X_{\pmb{\phi}_j(0)} (z))}^2} \mathrm{d}z  + \\
    &\int_{\lambda^*}^{\lambda} \left(z-\lambda^*\right) \mathrm{d}z
    + \int_{\mu^*}^{\mu} \left(z-\mu^*\right) \mathrm{d}z
\end{align*}
\end{theorem}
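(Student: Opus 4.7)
The plan is to mimic the argument of Theorem~\ref{theorem:invariant} but now track four types of contributions: the $f_i$-terms, the $g_j$-terms, the Lagrange multipliers $\lambda,\mu$, and the boundary constants $\lambda^*,\mu^*$. The safety assumption guarantees that each integrand in $H$ is well-defined on the trajectory (since $\nabla f_i$ and $\nabla g_j$ do not vanish along the relevant arcs, exactly as in Lemma~\ref{lemma:interval}), so differentiation under the integral sign is legal and $H$ is a well-defined $C^1$ function of $(\pmb F,\pmb G,\lambda,\mu)$.

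First I would differentiate $H$ along the flow by Leibniz's rule, obtaining
\[
\dot H = \sum_i \frac{f_i-p_i}{\|\nabla f_i(X_{\pmb\theta_i(0)}(f_i))\|^2}\,\dot f_i + \sum_j \frac{g_j-q_j}{\|\nabla g_j(X_{\pmb\phi_j(0)}(g_j))\|^2}\,\dot g_j + (\lambda-\lambda^*)\dot\lambda + (\mu-\mu^*)\dot\mu.
\]
Substituting the reduced dynamics of Lemma~\ref{lemma:functional-dynamics-multi} together with the equations $\dot\lambda=\sum_i f_i-1$ and $\dot\mu=-(\sum_j g_j-1)$ of Equation~\ref{eq:eq_gda_multi}, the weighted gradient norms cancel exactly against the factors $\|\nabla f_i\|^2$ and $\|\nabla g_j\|^2$, leaving the purely algebraic expression
\[
\dot H = -\sum_{i,j} u_{ij}(f_i-p_i)g_j + \sum_{i,j}u_{ij}f_i(g_j-q_j) - \lambda\sum_i(f_i-p_i) + \mu\sum_j(g_j-q_j) + (\lambda-\lambda^*)\!\left(\sum_i f_i-1\right) - (\mu-\mu^*)\!\left(\sum_j g_j-1\right).
\]

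Next I would expand and collect terms. The bilinear cross-terms $\pm\sum_{i,j}u_{ij}f_ig_j$ cancel and the remaining bilinear contribution reduces to $\sum_j g_j\sum_i u_{ij}p_i - \sum_i f_i\sum_j u_{ij}q_j$. Here I invoke the fact that $(\pmb p,\pmb q)$ is a fully mixed Nash equilibrium of the hidden game $U$: the indifference principle yields $\sum_i u_{ij}p_i = V$ for every $j$ and $\sum_j u_{ij}q_j = V$ for every $i$, where $V=\pmb p^\top U\pmb q$ is the common value (equal on both sides because the game is zero-sum). The bilinear part therefore collapses to $V(\sum_j g_j-\sum_i f_i)$. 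Expanding the multiplier terms and using $\sum_i p_i=\sum_j q_j=1$, the coefficients of $\lambda$ and $\mu$ vanish identically, while the residual linear terms in $f_i,g_j$ become $-\lambda^*\sum_i f_i+\mu^*\sum_j g_j$ and the constants collapse to $\lambda^*-\mu^*$.

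Combining everything, I obtain
\[
\dot H = \sum_j g_j(V+\mu^*) - \sum_i f_i(V+\lambda^*) + (\lambda^*-\mu^*).
\]
The final step is then forced: setting $\lambda^*=\mu^*=-V$ makes every coefficient vanish, so $\dot H\equiv 0$ along every trajectory and $H$ is time-invariant. The main subtle point will be verifying the sign bookkeeping in the bilinear cancellation and correctly invoking the Nash indifference conditions (which is where full mixedness, guaranteed by the safety assumption $p_i\in f_{i,\pmb\theta_i(0)}$ and $q_j\in g_{j,\pmb\phi_j(0)}$, becomes essential); the fact that $c_1=c_2=V$ in a zero-sum game is what makes a single constant $\lambda^*=\mu^*$ suffice, so without the zero-sum structure one could not close the argument.
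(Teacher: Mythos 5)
Your proposal is correct and follows essentially the same route as the paper's proof: differentiate $H$ along the flow, substitute the reduced dynamics of Lemma~\ref{lemma:functional-dynamics-multi} together with $\dot\lambda,\dot\mu$, cancel the bilinear cross-terms, and use the fully mixed equilibrium conditions plus $\sum_i p_i=\sum_j q_j=1$ to eliminate the remainder. The only difference is presentational: the paper fixes $\lambda^*,\mu^*$ upfront via the KKT conditions $\sum_{i} u_{i,j}p_i+\mu^*=0$ and $\sum_{j} u_{i,j}q_j+\lambda^*=0$ (which are exactly your indifference conditions), whereas you expand fully and solve for $\lambda^*=\mu^*=-V$ at the end.
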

\begin{proof}
We know that $(\pmb{p}, \pmb{q})$ is an equilibrium of the hidden bilinear game
\begin{equation}\label{eq:simple-bilinear}
   \min_{ \pmb{x} \in \Delta_N}\max_{ \pmb{y} \in \Delta_M} \pmb{x}^\top U \pmb{y}
\end{equation}
Let us make the same Lagrangian transformation we did in  Section \ref{section:general}. 
\begin{equation}\label{eq:saddle-lagrange}
   \min_{ \pmb{x} \geq 0, \mu \in \mathbb{R} }\max_{ \pmb{y} \geq 0, \lambda \in \mathbb{R}} \pmb{x}^\top U \pmb{y} + \mu \left( \sum_{i=1}^M y_i\right) + \lambda \left( \sum_{j=1}^N x_j\right)
\end{equation}

Since $(\pmb{p},\pmb{q})$ is an equilibrium of the problem of Equation \ref{eq:simple-bilinear}, the KKT conditions on the Problem of Equation \ref{eq:saddle-lagrange} imply that there are (unique) $\lambda^*,\mu^*$
\begin{align*}
    \forall j\in [M]:\quad \displaystyle\sum_{i\in [N]}u_{i,j}p_i+\mu^*=0\\
    \forall i\in [N]:\quad \displaystyle\sum_{j\in [M]}u_{i,j}q_j+\lambda^*=0\\ 
\end{align*}

We will analyze the time derivative of $H(\pmb{F}(t), \pmb{G}(t), \lambda(t), \mu(t))$ over the trajectory of CGDA (\eqref{eq:eq_gda_multi}).
\begin{align*}
    H(\pmb{F}, \pmb{G}, \lambda, \mu) = &\displaystyle\sum_{i=1}^N \int_{p_i}^{f_i} \frac{z-p_i}{\norm{\nabla f_i(X_{\pmb{\theta}_i(0)} (z))}^2} \mathrm{d}z +\displaystyle\sum_{j=1}^M \int_{q_j}^{g_j} \frac{z-q_j}{\norm{\nabla g_j(X_{\pmb{\phi}_j(0)} (z))}^2} \mathrm{d}z  + \\
    &\int_{\lambda^*}^{\lambda} \left(z-\lambda^*\right) \mathrm{d}z
    + \int_{\mu^*}^{\mu} \left(z-\mu^*\right) \mathrm{d}z\Rightarrow
\end{align*}
\begin{align*}
    \derivativeclosed{H(\pmb{F}(t), \pmb{G}(t), \lambda(t), \mu(t))}&=
    \sum_{i=1}^N \dot{f_i} \frac{f_i-p_i}{\norm{\nabla f_i(X_{\pmb{\theta}_i(0)} (f_i))}^2}+\sum_{j=1}^M \dot{g_j} \frac{g_j-q_j}{\norm{\nabla g_j(X_{\pmb{\phi}_j(0)} (g_j))}^2}\\
    &+\dot{\lambda}\left(\lambda-\lambda^*\right) +\left(\mu-\mu^*\right)\dot{\mu} \\
    \derivativeclosed{H(\pmb{F}(t), \pmb{G}(t), \lambda(t), \mu(t))}&=
    \sum_{i=1}^N \left(\sum_{j=1}^M u_{i,j} g_j + \lambda \right)  (p_i-f_i)\\
    &+\sum_{j=1}^M \left(\sum_{i=1}^N u_{i,j}f_i + \mu \right))(g_j-q_j)\\
    &+\left(\lambda-\lambda^*\right)\dot{\lambda} +\left(\mu-\mu^*\right)\dot{\mu} \\
\end{align*}
Applying the KTT conditions we have
\begin{align*}
    \sum_{j=1}^M u_{i,j} g_j + \lambda  = \sum_{j=1}^M u_{i,j} (g_j - q_j) + \lambda -\lambda^*\\
    \sum_{i=1}^N u_{i,j}f_i + \mu  = \sum_{i=1}^N u_{i,j} (f_i - p_i) + \mu -\mu^*
\end{align*}
We can now write down:
\begin{align*}
    \sum_{i=1}^N\sum_{j=1}^M u_{i,j} g_j (p_i-f_i)+ \lambda  = \sum_{i=1}^N \sum_{j=1}^M u_{i,j} (g_j - q_j) (p_i-f_i) + (\lambda -\lambda^*) \sum_{i=1}^N (p_i-f_i)\\
    \sum_{j=1}^M\sum_{i=1}^N u_{i,j}f_i (g_j-q_j) + \mu  = \sum_{j=1}^M\sum_{i=1}^N u_{i,j} (f_i - p_i) (g_j - q_j) +(\mu -\mu^*)\sum_{j=1}^M (g_j-q_j)
\end{align*}
Observe that summing the two expressions the $u_{i,j}$ terms cancel out. Thus we can write
\begin{align*}
    \derivativeclosed{H(\pmb{F}(t), \pmb{G}(t), \lambda(t), \mu(t))}&=
    (\lambda -\lambda^*) \sum_{i=1}^N (p_i-f_i)  + +(\mu -\mu^*)\sum_{j=1}^M (q_j-g_j) \\
    &+\left(\lambda-\lambda^*\right)\dot{\lambda} +\left(\mu-\mu^*\right)\dot{\mu} \\
\end{align*}
Additionally we have that $\pmb{p}$ and $\pmb{q}$ are probability vectors so
\begin{align*}
    \dot{\lambda} &= \sum_{i=1}^N f_i - 1 = \sum_{i=1}^N (f_i - p_i) \\
    \dot{\mu} &= - \left(\sum_{j=1}^M g_j -1 \right) = - \sum_{j=1}^M (g_j -q_j)
\end{align*}
Thus
\begin{align*}
    \derivativeclosed{H(\pmb{F}(t), \pmb{G}(t), \lambda(t), \mu(t))}=0
\end{align*}
\end{proof}

\shadowbox{
\begin{minipage}[c]{5in}
Since the proof of the following Theorem is fairly complicated, we will firstly outline the basic steps below:
\begin{enumerate}
    \item We first show that there is topological conjugate dynamical system
    whose dynamics are \emph{incompressible}  i.e. the volume of a set of initial conditions remains invariant as the dynamics evolve over time. By \theoref{theorem:poincar- Recurrence}, if every solution remains in a bounded space for all $t\geq0$, incompressibility implies recurrence.
    \item To establish boundedness in these dynamics, we exploit the aforementioned invariant function.
\end{enumerate}
\end{minipage}
}

\begin{theorem}[Restated \theoref{theorem:differomorphism-poincare-recurrence}]\label{restated-theorem:differomorphism-poincare-recurrence}
Assume that $(\pmb{\theta}(0),\pmb{\phi}(0), \lambda(0), \mu(0))$ is a safe initialization. Then the trajectory under the dynamics of  Equation \ref{eq:eq_gda_multi} is diffeomoprphic to one trajectory of a Poincar\'{e} recurrent flow.
\end{theorem}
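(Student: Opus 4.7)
The plan is to construct an explicit $C^1$ change of variables that conjugates the flow of Equation \ref{eq:eq_gda_multi} to a divergence-free flow on a bounded invariant domain, after which Liouville's formula together with the Poincar\'e recurrence theorem yield the claim.

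Under the safety hypothesis, each factor $\|\nabla f_i(X_{\pmb{\theta}_i(0)}(z))\|^2$ is strictly positive on the open interval $f_{i,\pmb{\theta}_i(0)}$, so the maps
\[
\Psi_i(z)=\int_{p_i}^z \frac{dw}{\|\nabla f_i(X_{\pmb{\theta}_i(0)}(w))\|^2},\qquad \Xi_j(z)=\int_{q_j}^z \frac{dw}{\|\nabla g_j(X_{\pmb{\phi}_j(0)}(w))\|^2}
\]
are strictly increasing $C^1$-diffeomorphisms onto their images. Setting $\tilde f_i=\Psi_i(f_i)$ and $\tilde g_j=\Xi_j(g_j)$, a chain-rule computation using \lemref{restated-lemma:functional-dynamics-multi} rewrites the reduced dynamics as
\begin{align*}
\dot{\tilde f}_i &= -\Big(\textstyle\sum_{j} u_{i,j}\,\Xi_j^{-1}(\tilde g_j)+\lambda\Big), & \dot{\tilde g}_j &= \textstyle\sum_{i} u_{i,j}\,\Psi_i^{-1}(\tilde f_i)+\mu,\\
\dot{\lambda} &= \textstyle\sum_{i}\Psi_i^{-1}(\tilde f_i)-1, & \dot{\mu} &= -\Big(\textstyle\sum_{j}\Xi_j^{-1}(\tilde g_j)-1\Big).
\end{align*}
Every component of the right-hand side is independent of the variable with respect to which one would differentiate it, so the divergence of this transformed vector field is identically zero and Liouville's formula gives that the corresponding flow $\tilde\Phi$ is volume-preserving.

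Next I would push the invariant of \theoref{restated-theorem:invariant_multi} through the diffeomorphism: the substitutions $w=\Psi_i^{-1}(s)$ and $w=\Xi_j^{-1}(s)$ yield
\[
\tilde H=\sum_i\int_0^{\tilde f_i}(\Psi_i^{-1}(s)-p_i)\,ds+\sum_j\int_0^{\tilde g_j}(\Xi_j^{-1}(s)-q_j)\,ds+\tfrac12(\lambda-\lambda^*)^2+\tfrac12(\mu-\mu^*)^2,
\]
which is still conserved along $\tilde\Phi$. Since each $\Psi_i^{-1}$ is strictly increasing with $\Psi_i^{-1}(0)=p_i$, the integrand of the $\tilde f_i$-term is signed like $\tilde f_i$, so each of the first two sums is non-negative and vanishes only at the origin; the quadratic terms in turn supply coercivity in $\lambda,\mu$. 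Provided $\Psi_i(f_{i,\pmb{\theta}_i(0)})$ fills out all of $\mathbb{R}$ (which follows because at the boundary of $f_{i,\pmb{\theta}_i(0)}$ the gradient ascent trajectory approaches a stationary point or infinity, forcing $\|\nabla f_i\|^{-2}$ to be non-integrable), the sublevel set $K=\{\tilde H\le \tilde H(0)\}$ is compact and forward-invariant under $\tilde\Phi$.

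Restricting $\tilde\Phi$ to $K$, volume preservation together with the Poincar\'e recurrence theorem then imply that $\tilde\Phi|_K$ is Poincar\'e recurrent. Composing the parameterizations $X_{\pmb{\theta}_i(0)}, X_{\pmb{\phi}_j(0)}$ of \theoref{restated-theorem:reparametrization} with the coordinate maps $\Psi_i, \Xi_j$ produces a $C^1$ bijection from the original trajectory onto a trajectory of $\tilde\Phi$ inside $K$, which supplies the desired diffeomorphism. The main technical hurdle is the coercivity step, i.e. verifying that the level set $K$ is actually compact: one must argue that the factor $\|\nabla f_i(X_{\pmb{\theta}_i(0)}(\cdot))\|^2$ decays sufficiently at the boundary of the orbit so that $\Psi_i$ surjects onto $\mathbb{R}$, and then exploit monotonicity of $\Psi_i^{-1}$ together with the sign structure of the integrand to force the integral terms of $\tilde H$ to blow up at every escape direction; the safety assumption is what makes both arguments go through.
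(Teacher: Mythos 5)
Your route is the paper's own: your $\Psi_i,\Xi_j$ are exactly the paper's maps $\mathcal{A}_i,\mathcal{B}_j$, your transformed system is its $\Sigma_{\textrm{Preserving}}$ (each component of the vector field is independent of its own variable, hence divergence free), and the chain Liouville $\Rightarrow$ volume preservation, invariant $\Rightarrow$ boundedness, Poincar\'{e} recurrence $\Rightarrow$ transfer through the diffeomorphism is the same. The one place you diverge is the step you yourself flag as the main hurdle, and there your argument has a genuine gap. You make compactness of the sublevel set $K$ rest on $\Psi_i$ surjecting onto $\mathbb{R}$, justified by the claim that at the boundary of $f_{i,\pmb{\theta}_i(0)}$ the orbit approaches a stationary point or infinity, ``forcing $\norm{\nabla f_i}^{-2}$ to be non-integrable.'' That is false at infinite ends of the range: in one dimension with $f_i(\theta)=e^{\theta}$ the ascent orbit has $f_i\to\infty$ and $\norm{\nabla f_i(X_{\pmb{\theta}_i(0)}(z))}^2=z^2$, so $\int^{\infty}z^{-2}\,\mathrm{d}z<\infty$ and the image of $\Psi_i$ is bounded above. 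So the surjectivity you condition on can fail, and your proof of it would not go through.

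The fix — and what the paper actually does — is to drop surjectivity entirely and argue directly that $a_i(t)\to\pm\infty$ forces $H\to\infty$: by safety, $p_i$ lies in the interior of the open interval $f_{i,\pmb{\theta}_i(0)}$ (cf.\ \lemref{lemma:interval}), so $\abs{z-p_i}$ is bounded away from zero near its ends, whence divergence of $\int\norm{\nabla f_i(X_{\pmb{\theta}_i(0)}(z))}^{-2}\,\mathrm{d}z$ implies divergence of $\int(z-p_i)\norm{\nabla f_i(X_{\pmb{\theta}_i(0)}(z))}^{-2}\,\mathrm{d}z$, and $H$ is a sum of non-negative terms. Together with the quadratic terms in $\lambda,\mu$, this bounds every orbit in terms of its conserved $H$-value; if instead the image of $\Psi_i$ is bounded on one side, $a_i$ is trivially bounded there anyway. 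Crucially, the recurrence theorem in the form the paper invokes (\theoref{theorem:poincar- Recurrence}) needs only volume preservation plus bounded orbits, not compact sublevel sets, so nothing is lost. Two smaller omissions: the paper proves that the product domain $V$ of attainable values (hence $\gamma(V)$) is invariant under the transformed flow — needed for the flow to be well defined where recurrence is applied, and shown via a uniqueness-of-solutions contradiction at boundary points where $\nabla f_i$ vanishes — and it records that a.e.\ recurrence transfers back through the conjugacy (\lemref{lem:congugacy-recurrence}); your final sentence asserts the diffeomorphism but these supporting facts should be stated.
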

\begin{proof}
Let us start with the dynamics of Equation \ref{eq:eq_gda_multi}. We we call its flow $\Phi_{\textrm{original}}$:
\begin{equation*}
   \Sigma_{\textrm{original}}:
   \begin{Bmatrix}
   \dot{\pmb{\theta}_i} &=& - \nabla f_i(\pmb{\theta}_i) \left(\displaystyle\sum_{j=1}^M u_{i,j}g_j(\pmb{\phi}_j) + \lambda \right) & 
   \dot{\pmb{\phi}_j} &=& \nabla g_j(\pmb{\phi}_j) \left(\displaystyle\sum_{i=1}^N u_{i,j}f_i(\pmb{\theta}_i) + \mu \right)\\
   \dot{\mu} &=& - \left(\displaystyle\sum_{j=1}^M g_j(\pmb{\phi}_j) -1\right) & \dot{\lambda} &=& \left(\displaystyle\sum_{i=1}^N f_i(\pmb{\theta}_i) -1\right)  
    \end{Bmatrix} 
\end{equation*}
In the previous theorems we have proved that $(X_{\pmb{\theta}_i(0)},X_{\pmb{\phi}_j(0)})$ are diffeomorphisms. We also know that by definition we have that 
\begin{align*}
    (X_{\pmb{\theta}_i(0)})^{-1}(\pmb{\theta}_i) = f_i(\pmb{\theta}_i) \quad\forall i \in [N]\\
    (X_{\pmb{\phi}_j(0)})^{-1}(\pmb{\phi}_j) = g_j(\pmb{\theta}_i) \quad\forall j \in [M]\\
\end{align*}
We can thus define the following diffeomorphism
\[\nu:
\begin{Bmatrix}
\begin{aligned}
&f_i &=& \quad (X_{\pmb{\theta}_i(0)})^{-1}(\pmb{\theta}_i) \quad\forall i \in [N]\\
&b_j &=& \quad (X_{\pmb{\phi}_j(0)})^{-1}(\pmb{\phi}_j) \quad\forall j \in [M]\\
&\mu &=& \quad \mu\\
&\lambda &=& \quad \lambda\\
\end{aligned}
\end{Bmatrix}
\]
Applying the transform we get a new dynamical system, whose flow we will call $\Phi_{\textrm{distributional}}$:
\begin{equation*}
\Sigma_{\textrm{distributional}}:
 \begin{Bmatrix}
    \dot{f_i} &=& -\norm{\nabla f_i(X_{\pmb{\theta}_i(0)} (f_i))}^2 \left(\sum_{j=1}^M u_{i,j} g_j + \lambda \right)\\
    \dot{g_j} &=&  \norm{\nabla g_j(X_{\pmb{\phi}_j(0)} (g_j))}^2 \left(\sum_{i=1}^N u_{i,j}f_i + \mu \right)\\
    \dot{\mu} &=&  -\left(\sum_{j=1}^M g_j -1\right) \\
    \dot{\lambda} &=& \left(\sum_{i=1}^N f_i -1\right)  
\end{Bmatrix}
\end{equation*}

Although $\Phi_{\textrm{distributional}}$ could be well defined for a wider set of points, we will focus our attention on the following set of points
\begin{equation*}
    \begin{aligned}
        V = & f_{1_{\pmb{\theta}_1(0)}} \times \cdots \times f_{N_{\pmb{\theta}_N(0)}} \\
        &\times g_{1_{\pmb{\phi}_1(0)}} \times \cdots \times g_{M_{\pmb{\phi}_M(0)}} \\
        &\times (-\infty, \infty) \times (-\infty,\infty)
    \end{aligned}
\end{equation*}

Observe that this choice is not problematic since: 
\begin{claim}
$V$ is an invariant set of $\Phi_{\textrm{distributional}}$
\end{claim}
\begin{proof}

Let 
\begin{equation*}
   \pmb{\mathfrak{D}}(t)=(f_1(t),\cdots,f_N(t), g_1(t),\cdots,g_M(t)) 
\end{equation*}
be the profile of all mixed strategies of all agents.
Assume that there is a $t_{\textrm{critical}} \in \mathbb{R}$ such that starting from $\pmb{\mathfrak{D}}_0$, it holds that for some $i\in [N]$, it holds that $f_{i}$ crosses the boundary of $V$ at time $t_{\textrm{critical}}$. Let us call the crossing point $\pmb{\mathfrak{D}}_{\textrm{critical}}$. Since $f_{i}(t_{\textrm{critical}})$ is an end-point of $f_{i_{\pmb{\theta}_{i}(0)}}$ we have that 
\begin{equation*}
    \nabla f_{i} (X_{\pmb{\theta}_{i}(0)}(f_{i}(t_{\textrm{critical}}))) = 0 
\end{equation*}
and thus by the equations of $\dot{f_{i}}$, we have $\dot{f_{i}} = 0$. On the one hand, observe that for $\Phi_{\textrm{distributional}}(\pmb{\mathfrak{D}}_{\textrm{critical}}, \cdot)$ we have that $f_{i}$ should be constant. On the other hand, for $\Phi_{\textrm{distributional}}(\pmb{\mathfrak{D}}_{0}, \cdot)$ it is not the case since $\pmb{\mathfrak{D}}_{0} \in V$ and $\pmb{\mathfrak{D}}_{\textrm{critical}}$ has an $f_{i}$ that is on the edge of $f_{i_{\pmb{\theta}_i(0)}}$. Thus $\Phi_{\textrm{distributional}}(\pmb{\mathfrak{D}}_{0} , \cdot)$ and $\Phi_{\textrm{distributional}}(\pmb{\mathfrak{D}}_{\textrm{critical}}, \cdot)$ are different. This is a contradiction since $\pmb{\mathfrak{D}}_{\textrm{critical}}$ and $\pmb{\mathfrak{D}}_{0}$ belong to the same trajectory of the flow. The same argument applies for $g_j$.

\end{proof}

Clearly $\Phi_{\textrm{original}}( \{\pmb{\theta}_i(0),\pmb{\phi}_j(0), \mu(0), \lambda(0)  \}, \cdot)$ and $\Phi( \{ f_i(\pmb{\theta}_i(0)), g_j(\pmb{\phi}_j(0)), \mu(0), \lambda(0)  \}, \cdot)$ are diffeomorphic. It thus remains to prove that $\Phi$ is Poincar\'{e} recurrent. 

\paragraph{Divergence Free Topological Conjugate Dynamical System}
We will transform the above dynamical system to a divergence free system 
on different space via the following map :
\[\gamma:
\begin{Bmatrix}
\begin{aligned}
a_i&=&\mathcal{A}_i(f_i)&=&  \displaystyle\int_{p_i}^{f_i}\frac{1}{\norm{\nabla f_i(X_{\pmb{\theta}_i(0)} (z))}^2} \mathrm{d} z \quad\forall i \in [N]\\
b_j&=&\mathcal{B}_j(g_j)&=& \displaystyle\int_{q_j}^{g_i}\frac{1}{\norm{\nabla g_j(X_{\pmb{\phi}_j(0)} (z))}^2} \mathrm{d} z \quad\forall j \in [M]\\
\mu &=& \mu \\
\lambda &=& \lambda \\
\end{aligned}
\end{Bmatrix}
\]

\begin{claim}\label{claim:diffeomophism}
$\gamma$ is a diffeomorphism.
\end{claim}
\begin{proof}
Indeed, 
\begin{align*}
 \mathbb{F}_i(f) = \frac{1}{\norm{\nabla f_i(X_{\pmb{\theta}_i(0)} (f_i))}^2}\\
 \mathbb{G}_j(g) = \frac{1}{\norm{\nabla g_j(X_{\pmb{\phi}_i(0)} (g_j))}^2}
\end{align*}
are positive and smooth functions. Thus $\mathcal{A}_i(f_i),\mathcal{B}_j(g_j)$
are monotone functions and consequently bijections and are continuously differentiable. Again because of the monotonicity using Inverse Function Theorem we can show easily that $\mathcal{A}_i(f_i),\mathcal{B}_j(g_j)$ have also continuously differentiable inverse.
\end{proof}

As a first step let us apply $\gamma$ on the equations of our dynamical system:
\begin{equation*}
\begin{aligned}
    \dot{a_i}&=&  \frac{d \mathcal{A}_i(f_i)}{d f_i} \dot{f_i} = \dot{f_i}\frac{1}{\norm{\nabla f_i(X_{\pmb{\theta}_i(0)} (f_i))}^2} =- \left(\sum_{j=1}^M u_{i,j} g_j + \lambda \right)\\
    \dot{b_j}&=&   \frac{d \mathcal{B}_j(g_j)}{d g_j}  \dot{g_j}= \dot{g_j}\frac{1}{\norm{\nabla g_j(X_{\pmb{\phi}_j(0)} (g_j))}^2}  =  \left(\sum_{i=1}^N u_{i,j}f_i + \mu \right) 
\end{aligned}  
\end{equation*}
Observe that on the right hand side of our equations, $f_i$ can be written as $\mathcal{A}_i^{-1} (a_i)$ and $g_j$ can be written as $\mathcal{B}_j^{-1} (g_j)$, so this is an autonomous dynamical system, whoose flow we will call $\Psi$ and whose vector field we will call $\pmb{Y}$:
\begin{equation*}
\Sigma_{\textrm{Preserving}}:
\begin{Bmatrix}
    \dot{a_i}&=& -\left(\sum_{j=1}^M u_{i,j} \mathcal{B}_j^{-1} (g_j) + \lambda \right) &&
    \dot{b_j}&=& \left(\sum_{i=1}^N u_{i,j}\mathcal{A}_i^{-1} (a_i) + \mu \right)\\
    \dot{\mu}&=& - \left(\sum_{j=1}^M \mathcal{B}_j^{-1} (g_j) -1\right) &&
    \dot{\lambda}&=& \left(\sum_{i=1}^N \mathcal{A}_i^{-1} (a_i) -1\right)  
\end{Bmatrix}\Leftrightarrow
\end{equation*}

\begin{equation*}
\Sigma_{\textrm{Preserving}}:\begin{pmatrix}
\dot{a_i}\\ 
\dot{b_j}\\ 
\dot{\mu}\\ 
\dot{\lambda}
\end{pmatrix}=\pmb{Y}\begin{pmatrix}{a_i},{b_j}, {\mu}, {\lambda}\end{pmatrix}
\end{equation*}

Taking the Jacobian of $\pmb{Y}$, all elements across the diagonal are zero : The coordinate of $\dot{a_i}$ does not depend on $a_i$ and the same goes for all state variables. Given that the divergence of the vector field is equal to the trace of the Jacobian, we are certain that this new dynamical system is divergence free:
\begin{equation*}
    \mathrm{div} [\pmb{Y}] = 0
\end{equation*}

Once again we focus our attention on $\gamma(V)$ that is invariant for $\Psi$. To prove this invariant, assume that one trajectory of $\Psi$ starting from inside $\gamma(V)$ escaped it. Then given that $\gamma$ is a diffeomorphism, the corresponding trajectory of $\Phi$ will start from $V$ and also escape it, which is not possible since $V$ is invariant for $\Phi$.

\paragraph{Boundness of Trajectories}
In the next section of the proof, we will show that the trajectories of $\Psi$ are also bounded. Our analysis will be based on the invariant function of Theorem \ref{theorem:invariant_multi}. Note that based on the way we proved Theorem \ref{theorem:invariant_multi}, the invariant supplied there is binding for \textbf{all initializations} in $V$ and not just the trajectory of $\Phi( \{ f_i(\pmb{\theta}_i(0)), g_j(\pmb{\phi}_j(0)), \mu(0), \lambda(0)  \}, \cdot)$.

We will split our proof in two cases.
\begin{claim}
For all initializations in $\gamma(V)$, it holds that $\lambda(t),\mu(t)$ are bounded.
\end{claim}
\begin{proof}
Observe the following fact
\begin{equation*}
    \lambda(t) \to \pm \infty \Rightarrow \int_{\lambda^*}^{\lambda(t)} (z-\lambda^*) \mathrm{d}z \to \infty \Rightarrow H \to \infty
\end{equation*}
The last step of this analysis comes from the fact that $H$ is a sum of non-negative terms so if one of them goes to infinity the whole sum becomes unbounded.
Since initializations in $V$ start with finite values of $H$, it is necessary that $\lambda$ remains bounded. Obviously, the same proof strategy applies to the case of $\mu(t)$. 
\end{proof}

Now let us analyze the rest of the variables
\begin{claim}
For all initializations in $\gamma(V)$, it holds that $a_i(t),b_j(t)$ are bounded.
\end{claim}
\begin{proof}
By definition
\begin{equation*}
    a_i(t) \to \pm \infty \Rightarrow \int_{p_i}^{f_i(t)}\frac{1}{\norm{\nabla f_i(X_{\pmb{\theta}_i(0)} (z))}^2} \to \pm \infty
\end{equation*}
Observe also that
\begin{equation*}
    \int_{p_i}^{f_i(t)}\frac{1}{\norm{\nabla f_i(X_{\pmb{\theta}_i(0)} (z))}^2} \to \pm \infty \Rightarrow \int_{p_i}^{f_i(t)}\frac{z-p_i}{\norm{\nabla f_i(X_{\pmb{\theta}_i(0)} (z))}^2} \to \infty
\end{equation*}
This is true because $z-p_i$ is bounded away from zero when $f_i$ is converging to the edges of $f_{i_{\pmb{\theta}_i(0)}}$ as $p_i$ is in the interior of the set for safe initializations.  Thereofe we can once again conclude that
\begin{equation*}
    a_i(t) \to \pm \infty \rightarrow H \to \infty
\end{equation*}
Once again for initializations in $V$, $H$ remains constant and finite. Therefore $a_i$ should be bounded. The same analysis works for $b_j$.
\end{proof}

\paragraph{Application of Poincar\'{e} Recurrence Theorem}
To summarize the properties that we have established until now , we have shown that system of $\Psi$ is divergence free and has only bounded orbits. Liouville's formula also yields that $\Psi$ is a  volume preserving flow. By applying Poincar\'{e} Recurrence Theorem ( \theoref{theorem:poincar- Recurrence} ) almost all initial conditions in $\gamma(V)$ of $\Psi$ are recurrent. Thus the set $W$ of all non-recurrent points in $\Psi$ has measure zero. 

Using the properties of diffeomorphism, we can to propagate the recurrence behavior of $\Psi$ back to $\Phi_{\textrm{disitributional}}$ using \lemref{lem:congugacy-recurrence}
Thus the set of recurrent points of $\Phi$ is $\gamma^{-1}(W)$. Since diffeomorphisms preserve measure zero sets and $W$ has measure zero, the set of recurrent points of $\Phi$ has measure zero, indicating that $\Phi$ is indeed recurrent.
\end{proof}

\begin{theorem}[Restated \theoref{theorem:sigmoid-poincare-recurrence-behavior}]
Let $f_i$ and $g_j$ be sigmoid functions. Then the flow of  Equation \ref{eq:eq_gda_multi} is Poincar\'{e} recurrent. The same holds for all functions $f_i$ and $g_j$ that are one to one functions and for which all initializations are safe.
\end{theorem}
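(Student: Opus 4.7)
The plan is to upgrade the per-trajectory statement of Theorem~\ref{restated-theorem:differomorphism-poincare-recurrence} to a statement about the full flow. This requires two ingredients: first, showing that under sigmoid (or, more generally, one-to-one) activations the safety hypothesis becomes automatic so that every initial condition participates; second, transferring the Poincar\'{e} recurrence established for the auxiliary divergence-free flow $\Psi$ back to the original flow $\Phi_{\text{original}}$ via a \emph{global} diffeomorphism, which will preserve the null structure of the set of non-recurrent points.

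First I would verify that every initialization is safe in the sigmoid case. Since $\sigma'(x)=\sigma(x)(1-\sigma(x))>0$ for all $x\in\mathbb{R}$, the maps $f_i$ and $g_j$ have no stationary points, so the first clause of the safety definition is satisfied at every $\pmb{\theta}_i(0),\pmb{\phi}_j(0)$. Moreover, since $\sigma$ is a bijection from $\mathbb{R}$ onto $(0,1)$, gradient ascent on $f_i$ starting from any point sweeps $f_i$ over the entire open interval $(0,1)$, so $f_{i_{\pmb{\theta}_i(0)}}=(0,1)\ni p_i$ and symmetrically $g_{j_{\pmb{\phi}_j(0)}}=(0,1)\ni q_j$, the inclusion using the fully-mixed Nash assumption $p_i,q_j\in(0,1)$. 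Thus every initial condition is safe and Theorem~\ref{restated-theorem:differomorphism-poincare-recurrence} applies universally.

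Next I would assemble the global change of coordinates $\Xi=\gamma\circ\nu$ already built in the proof of Theorem~\ref{restated-theorem:differomorphism-poincare-recurrence}. The map $\nu:(\pmb{\theta}_i,\pmb{\phi}_j,\mu,\lambda)\mapsto(f_i,g_j,\mu,\lambda)$ is, in the sigmoid case, the componentwise application of $\sigma$ together with identity on $(\mu,\lambda)$, hence a global diffeomorphism onto $(0,1)^{N+M}\times\mathbb{R}^2$. Composing with the diffeomorphism $\gamma$ of Claim~\ref{claim:diffeomophism}, the full map $\Xi$ is a diffeomorphism from the original phase space onto the domain of the divergence-free flow $\Psi$. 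In the proof of Theorem~\ref{restated-theorem:differomorphism-poincare-recurrence} we showed that $\Psi$ is divergence-free, that the invariant $H$ plus safety force every orbit to be bounded, so by Liouville's formula $\Psi$ is volume-preserving on a bounded-orbit domain and the Poincar\'{e} Recurrence Theorem (Theorem~\ref{theorem:poincar- Recurrence}) yields that all but a null set of its points are recurrent. Pulling this null set back through the diffeomorphism $\Xi$ yields a null set in the original phase space, and Lemma~\ref{lem:congugacy-recurrence} guarantees that for every point outside it the original trajectory is recurrent. This proves Poincar\'{e} recurrence of $\Phi_{\text{original}}$, which is the first half of the theorem.

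For the second half, concerning general one-to-one $f_i,g_j$ for which all initializations are safe, the identical argument applies: one-to-one-ness makes $\nu$ a global diffeomorphism onto the product of the open ranges of the activations, and the blanket safety hypothesis is exactly what is needed to invoke Theorem~\ref{restated-theorem:differomorphism-poincare-recurrence} at every initial condition, so the same pullback-of-null-sets argument closes the proof. The step I expect to require the most care is checking that $\Xi$ is truly a \emph{global} diffeomorphism onto the domain on which $\Psi$ lives (not just a local one), so that the null-set transfer is legitimate; for sigmoid this is immediate because the range of $\nu$ is exactly $(0,1)^{N+M}\times\mathbb{R}^2$, matching $\gamma(V)$, while in the general one-to-one case one has to identify this domain with the product of the open images of the activations, after which boundedness of $\Psi$-orbits follows from the invariant $H$ exactly as in the proof of Theorem~\ref{restated-theorem:differomorphism-poincare-recurrence}.
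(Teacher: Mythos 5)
Your proposal is correct and follows essentially the same route as the paper: both exploit that invertibility of the activations makes $X_{\pmb{\theta}_i(0)}=f_i^{-1}$ and $X_{\pmb{\phi}_j(0)}=g_j^{-1}$ independent of the initialization, so the conjugacy to the volume-preserving system becomes a single \emph{global} diffeomorphism through which the measure-zero set of non-recurrent points of $\Psi$ pulls back to a measure-zero set for $\Phi_{\textrm{original}}$ (the paper routes this through the intermediate flow $\Phi_{\textrm{distributional}}$ via $\nu$ and then $\gamma$, whereas you compose $\Xi=\gamma\circ\nu$ directly, an inessential difference). Your explicit check that every initialization is automatically safe in the sigmoid case is left implicit in the paper but is exactly the right supporting observation.
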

\begin{proof}
One can notice that since $f_i$ and $g_j$ are invertible functions $X_{\theta_i(0)}(\cdot)$ is totally independent of the choice $\theta_i(0)$. In other words we can substitute 
\begin{align*}
    X_{\theta_i(0)}(\cdot)= {f_i}^{-1}(\cdot)\\
    X_{\phi_j(0)}(\cdot) =  {g_j}^{-1}(\cdot) \\
\end{align*}
Thus, in contrast to the previous theorem (Theorem \ref{theorem:differomorphism-poincare-recurrence}), the construction of $\Phi_{\textrm{distributional}}$ does not depend on the initialization. There is a unique $\Phi_{\textrm{distributional}}$ for all initializations. In fact using the same map $\nu$ as in the previous theorem, we can prove that $\Phi_{\textrm{original}}$ is diffeomorphic to $\Phi_{\textrm{distributional}}$.  However, using the previous theorem  the flow $\Phi_{\textrm{distributional}}$ is Poincar\'{e} recurrent. Repeating the topological conjugacy argument of the previous theorem we can transfer the Poincar\'{e} recurrence property from the dynamical system of $\Phi_{\textrm{distributional}}$ to the dynamical system of $\Phi_{\textrm{original}}$.
\end{proof}

\section[Omitted Proofs of Section \ref{section:spurious}: Spurious equilibria]{Omitted Proofs of Section \ref{section:spurious}\\ Spurious equilibria}

\begin{theorem}[Restated \theoref{theorem:spurious}]\label{restated-theorem:theorem:spurious}
One can construct functions $f$ and $g$ for the system of Equation \ref{eq:eq_gda} so that for a positive measure set of initial conditions the trajectories converge to fixed points that do not correspond to equilibria of the hidden game.
\end{theorem}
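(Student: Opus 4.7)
The plan is to exhibit explicit one-dimensional examples of $f$ and $g$ (so $n=m=1$) whose non-global critical points yield hyperbolic stable fixed points of the vector field in Equation~\ref{eq:eq_gda}, and then invoke the Stable Manifold Theorem (Theorem \ref{theorem:smt-contiuous}) to harvest an open set of initial conditions converging to such a spurious equilibrium. Assume $v>0$ without loss of generality. Choose smooth $f:\mathbb{R}\to[0,1]$ and $g:\mathbb{R}\to[0,1]$ admitting interior critical points $\theta^\star,\phi^\star$ with $f'(\theta^\star)=0,\ g'(\phi^\star)=0$, strictly negative second derivatives $f''(\theta^\star)<0,\ g''(\phi^\star)<0$ (so both are strict local maxima), and values $f^\star:=f(\theta^\star)$ and $g^\star:=g(\phi^\star)$ that satisfy
\[
f^\star > p \quad\text{and}\quad g^\star < q.
\]
Such functions are easy to construct (e.g.\ take sigmoid-like bumps with appropriate shifts and scaling, or explicit polynomial mollifications).

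Since both partial derivatives of $r$ contain factors $\nabla f$ or $\nabla g$, the point $(\theta^\star,\phi^\star)$ is immediately a fixed point of the dynamics in Equation~\ref{eq:eq_gda}. It is clearly not a Nash/min-max point of the hidden bilinear game, since $f^\star\neq p$ and $g^\star\neq q$. Next I would compute the Jacobian of the vector field at $(\theta^\star,\phi^\star)$. Using $f'(\theta^\star)=g'(\phi^\star)=0$, the off-diagonal entries vanish and
\[
J(\theta^\star,\phi^\star)=\begin{pmatrix} -v\,f''(\theta^\star)\,(g^\star-q) & 0 \\ 0 & v\,g''(\phi^\star)\,(f^\star-p) \end{pmatrix}.
\]
By the choices above, $f''(\theta^\star)(g^\star-q)>0$ and $g''(\phi^\star)(f^\star-p)<0$, so both diagonal entries are strictly negative. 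Thus both eigenvalues of $J$ have strictly negative real part; the fixed point is hyperbolic and \emph{asymptotically stable}.

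With hyperbolicity in hand, Theorem \ref{theorem:smt-contiuous} (Stable Manifold Theorem) applied at $(\theta^\star,\phi^\star)$ yields a $2$-dimensional stable manifold, which in this case is a full neighborhood of the fixed point; every orbit starting in this neighborhood converges to $(\theta^\star,\phi^\star)$ as $t\to\infty$. This open set has strictly positive Lebesgue measure, and each such trajectory converges to a fixed point that is not a Nash equilibrium of the hidden bilinear game $U$, proving the claim. The construction immediately extends to the multi-strategy setting of Section \ref{section:general} by placing analogous local maxima in each coordinate $f_i,g_j$ and choosing their values on the appropriate side of $p_i,q_j$.

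The main obstacle, which the above construction sidesteps, is that a generic critical point of $f$ or $g$ only kills one of the two diagonal terms, producing a degenerate (non-hyperbolic) fixed point to which the Stable Manifold Theorem does not directly apply. The essential trick is therefore to arrange the bad critical points of $f$ and $g$ \emph{simultaneously} and to choose the heights $f^\star,g^\star$ on the correct sides of $p,q$ so that the remaining scalar factors in $J$ are both negative; any such choice turns a neutrally-stable degeneracy into a strictly attracting node.
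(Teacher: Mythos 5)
Your proposal is correct and follows essentially the same route as the paper: place simultaneous critical points of $f$ and $g$ so the Jacobian of the vector field becomes block diagonal, choose the critical values on the correct sides of $(p,q)$ to make all eigenvalues negative, and invoke the Stable Manifold Theorem to obtain a full-dimensional (hence positive-measure) basin converging to the non-Nash fixed point. The only differences are cosmetic: you work with $n=m=1$ and strict local maxima with $f^\star>p$, $g^\star<q$, while the paper uses local minima (positive-definite Hessians) with $f(\pmb{\theta}^\star)<p$, $g(\pmb{\phi}^\star)>q$ --- mirror-image sign choices that yield the same negative spectrum.
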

\begin{proof}
Our strategy is to analyze the structure of the Jacobian of the vector field of Equation \ref{eq:eq_gda} at stationary points of $f$ and $g$. Let us call $\pmb{Y}(\pmb{\theta}, \pmb{\phi})$ the vector field of Equation \ref{eq:eq_gda}. Now we can write down its Jacobian
\begin{equation*}
    \mathrm{D}\pmb{Y}(\pmb{\theta}, \pmb{\phi}) =
\begin{pmatrix}
    -v \left( g(\pmb{\phi}) -q \right) \nabla^2 f(\pmb{\theta}) &  -v \nabla f(\pmb{\theta}) \otimes \nabla g (\pmb{\phi}) \\
    v \nabla g (\pmb{\phi}) \otimes \nabla f(\pmb{\theta})   & v \left( f(\pmb{\theta}) - p  \right) \nabla^2 g(\pmb{\phi}) 
\end{pmatrix}
\end{equation*}
Let us focus our attention on stationary points of $f$ and $g$. Let us call them $\pmb{\theta}^*$ and $\pmb{\phi}^*$
\begin{equation*}
      \mathrm{D}\pmb{Y}(\pmb{\theta}^*, \pmb{\phi}^*) =
v \begin{pmatrix}
    - \left( g(\pmb{\phi}^*) -q \right) \nabla^2 f(\pmb{\theta}^*) &  \mathbf{0}_{n \times m} \\
    \mathbf{0}_{m \times n}   & \left( f(\pmb{\theta}^*) - p  \right) \nabla^2 g(\pmb{\phi}^*) 
\end{pmatrix}  
\end{equation*}
 We want to study the cases where all eigenvalues of this matrix are negative (i.e. the fixed point is stable). Let $\lambda_i (\nabla^2 f(\pmb{\theta}^*))$ be the eigenvalues of $\nabla^2 f(\pmb{\theta}^*)$ and $\lambda_i(\nabla^2 g(\pmb{\phi}^*))$ the corresponding eigenvalues of $\nabla^2 g(\pmb{\phi}^*)$. Then we know that the eigenvalues of $\mathrm{D}\pmb{Y}(\pmb{\theta}^*, \pmb{\phi}^*)$ are
\begin{align*}
    - v\left( g(\pmb{\phi}^*) -q \right) \lambda_i (\nabla^2 f(\pmb{\theta}^*)) && v\left( f(\pmb{\theta}^*) - p  \right) \lambda_i(\nabla^2 g(\pmb{\phi}^*))
\end{align*}
Here we will analyze the case of $v>0$ (the case of $v<0$ is completely similar). To get that all eigenvalues are negative we can simply require:
\begin{itemize}
    \item $\nabla^2 f(\pmb{\theta}^*)$ and $\nabla^2 g(\pmb{\phi}^*)$ are invertible.
    \item $\pmb{\phi}^*$ is a local minimum with $g(\pmb{\phi}^*)> q$. Combined with the first condition we get that $\nabla^2 g(\pmb{\phi}^*)$ is positive definite.
    \item $\pmb{\theta}^*$ is a local minimum with $f(\pmb{\theta}^*) < p$. Combined with the first condition we get that $\nabla^2 f(\pmb{\theta}^*)$ is positive definite.
\end{itemize}
One can observe that the second condition allows the existence of unsafe initializations if $\pmb{\phi}(0)$ is in the vicinity of $\pmb{\phi}^*$. 

Clearly based on Theorem \ref{theorem:smt-contiuous}, there is a full dimensional manifold of points that eventually converge to this fixed point. Given that the manifold has full dimension, this set of points has positive measure. Additionally, $g(\pmb{\phi}^*)$ and $f(\pmb{\theta}^*)$ do not take the values of the unique equilibrium of the hidden Game.  
\end{proof}

\section[Omitted Proofs of Section \ref{section:discrete}: Discrete Time Gradient-Descent-Ascent]{Omitted Proofs of Section \ref{section:discrete}\\ Discrete Time Gradient-Descent-Ascent}

\shadowbox{
\begin{minipage}[c]{5in}
The outline of this Section is the following:
\begin{enumerate}
    \item We first review an existing result that shows that invariants of continuous time systems that have convex level sets, even though they may not be invariants for the discrete time counterparts, they are at least non-decreasing for the discrete case.
    \item We show that the invariant of Theorem \ref{theorem:invariant_multi} is convex for the case of sigmoid functions. Therefore it has convex level sets.
    \item We extend the construction of Theorem \ref{theorem:spurious} to discrete time systems. 
\end{enumerate}
\end{minipage}
}

\begin{theorem}[Theorem 5.3. of \cite{hamiltonianes}]
\label{restated-theorem:increasingenergy} 
     Suppose a continuous dynamic $y(t)$ has an invariant energy $H(y)$. If $H$ is continuous with convex sublevel sets then the energy in the corresponding discrete-time dynamic obtained via Euler’s method/integration is non-decreasing.
\end{theorem}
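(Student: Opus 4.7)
The plan is to reduce the multi-dimensional claim to a one-dimensional quasi-convexity argument along the straight line that a single Euler step traces. Writing the continuous dynamic as $\dot{y} = f(y)$ and the Euler update as $y_{k+1} = y_k + \alpha f(y_k)$, I would define $\phi : \mathbb{R} \to \mathbb{R}$ by $\phi(t) = H(y_k + t f(y_k))$, so that the goal $H(y_{k+1}) \ge H(y_k)$ becomes $\phi(\alpha) \ge \phi(0)$. The rest of the argument then concerns only this restriction of $H$ to a single line.

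The first key fact is that $\phi'(0) = 0$. Because $H$ is an invariant of the continuous flow, differentiating $H(y(t)) \equiv H(y(0))$ along any trajectory yields $\nabla H(y) \cdot f(y) \equiv 0$; evaluating at $y_k$ gives $\phi'(0) = \nabla H(y_k) \cdot f(y_k) = 0$. Intuitively, the Euler displacement direction $f(y_k)$ is tangent at $y_k$ to the level set of $H$. The second key fact is that $\phi$ inherits convex sublevel sets. For any $c$, the set $\{t : \phi(t) \le c\}$ is the preimage of the convex set $\{y : H(y) \le c\}$ under the affine map $t \mapsto y_k + t f(y_k)$, and affine preimages of convex sets are convex; since convex subsets of $\mathbb{R}$ are intervals, $\phi$ is a continuous quasi-convex (i.e., unimodal) function on the real line.

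Combining the two facts finishes the argument. A continuous unimodal function attains its global minimum on some (possibly singleton or unbounded) interval $[t^*, t^{**}]$, is non-increasing on $(-\infty, t^*]$, and is non-decreasing on $[t^{**}, \infty)$. If $0 < t^*$ then $\phi$ is strictly decreasing in a neighborhood of $0$, forcing $\phi'(0) < 0$ and contradicting the first fact; the symmetric case $0 > t^{**}$ is ruled out the same way. Hence $0 \in [t^*, t^{**}]$, so $\phi$ is minimized at $0$ and $\phi(\alpha) \ge \phi(0)$, which is exactly the desired monotonicity of the discrete energy.

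The main subtlety lies in the last step, where one must convert the geometric unimodality statement into the analytic conclusion using $\phi'(0) = 0$: one has to distinguish between $\phi$ being weakly versus strictly monotone to the left or right of the minimizing flat, and to rule out the pathological case that a flat interval of $\phi$ ends exactly at $0$. If $H$ is only assumed continuous rather than $C^1$, one would substitute one-sided directional derivatives or subgradients for $\nabla H$ and argue via the supporting-hyperplane characterization of convex sets; however, in the paper's setting $H$ is smooth enough that the classical differentiable argument above suffices. Notice that no property of the continuous flow beyond the invariance of $H$ is used, which is why the result applies uniformly to any Hamiltonian-like invariant coming from the previous sections.
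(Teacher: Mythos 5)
There is a genuine gap in your final step. From the two facts you actually establish --- quasi-convexity of the restriction $\phi(t)=H(y_k+t\,f(y_k))$ and $\phi'(0)=0$ --- it does not follow that $0$ lies in the minimizing interval $[t^*,t^{**}]$. Your claimed implication ``if $0<t^*$ then $\phi$ is strictly decreasing in a neighborhood of $0$, forcing $\phi'(0)<0$'' is false: a strictly decreasing differentiable function can have vanishing derivative at isolated points. Concretely, $\phi(t)=-t^3$ is continuous with convex sublevel sets (each sublevel set is a half-line, so it is quasi-convex), satisfies $\phi'(0)=0$, and yet $\phi(\alpha)<\phi(0)$ for every $\alpha>0$. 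Since this function satisfies both of your facts while violating the conclusion, the gap is not a matter of handling the ``flat ending at $0$'' case more carefully --- the two facts are jointly insufficient, and no purely one-dimensional argument built on them can close the proof. The information you discarded is that $H$ is invariant along the \emph{entire} trajectory, not merely that its directional derivative vanishes at the single point $y_k$.

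The paper's proof uses exactly that global information, and stays in the ambient space rather than restricting to the Euler line. It takes a supporting hyperplane $\{y:a^\top y=a^\top y(t)\}$ of the convex sublevel set $\{y:H(y)\le c\}$ at the current iterate; because the whole backward trajectory lies in that set, a one-sided difference quotient gives $a^\top \dot y(t)\ge 0$ (note this chord argument needs no differentiability of $H$, matching the theorem's mere continuity hypothesis), hence the Euler point satisfies $a^\top \hat y\ge a^\top y(t)$. If one had $H(\hat y)<c$, then by continuity the perturbed point $\hat y+\epsilon a$ would still lie in the sublevel set for small $\epsilon>0$, yet $a^\top(\hat y+\epsilon a)>a^\top y(t)$ contradicts the supporting hyperplane. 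This transverse perturbation in the normal direction $a$ is precisely what your line restriction cannot see. One caveat in your favor: in the paper's actual application the energy is genuinely convex (Lemma \ref{lemma:h-convexity}), and under that stronger hypothesis your argument does become correct, since $\phi$ is then convex in $t$ and $\phi'(0)=0$ really does force a global minimum at $0$; but as a proof of the theorem as stated, with only convex sublevel sets assumed, the proposal does not go through.
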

\begin{proof}
Let us consider a continuous time dynamical system:
\begin{equation*}
    \derivativeclosed{\pmb{y}(t)} = \pmb{F}(\pmb{y}(t))
\end{equation*}
Let $t$ denote the current time instant of a trajectory with initial conditions $\pmb{y}_0$. Doing discrete time gradient-descent-ascent  with with step-size $\eta$ yields an approximation of $\pmb{y}_{\pmb{y}_0}(t+\eta)$
\begin{align}
	\hat{\pmb{y}_{\pmb{y}_0}}^{t+\eta}= \pmb{y}_{\pmb{y}_0}(t)+\eta \derivativeclosed{\pmb{y}(t)}
\end{align}

To prove our theorem it suffices to show that
\begin{equation*}
    H(\hat{\pmb{y}_{\pmb{y}_0}}^{t+\eta})\geq H(\pmb{y}_{\pmb{y}_0}(t))
\end{equation*}
 
 Suppose $H(\pmb{y}_{\pmb{y}_0}(t))=c$ and without loss of generality, assume $\{\pmb{y}_{\pmb{y}_0}: H(\pmb{y}_{\pmb{y}_0})\leq c\}$ is full-dimensional. 
Since $\{\pmb{y}_{\pmb{y}_0}: H(\pmb{y}_{\pmb{y}_0})\leq c\}$ is convex, there exists a supporting hyperplane $\{\pmb{y}_{\pmb{y}_0}: a^\intercal \pmb{y}_{\pmb{y}_0} = a^\intercal \pmb{y}_{\pmb{y}_0}(t)\}$ such that $a^\intercal \pmb{y}_{\pmb{y}_0} \leq a^\intercal \pmb{y}_{\pmb{y}_0}(t)$ for all $\pmb{y}_{\pmb{y}_0}\in \{\pmb{y}_{\pmb{y}_0}: H(\pmb{y}_{\pmb{y}_0})\leq c\}$.
	
	Because of the invariance property of $H$ over the trajectory with it holds:
	$H(\pmb{y}_{\pmb{y}_0}(t))=c \  \forall t\in \mathbb{R}$
	
	Therefore, 	
	\begin{align*}
		a^\intercal \left(\frac{d}{dt}\pmb{y}_{\pmb{y}_0}(t)\right)&= a^\intercal \left(\lim_{s\to 0^+} \frac{\pmb{y}_{\pmb{y}_0}(t)-\pmb{y}_{\pmb{y}_0}(t-s)}{s}\right)\\
		&= \left(\lim_{s\to 0^+} \frac{a^\intercal \pmb{y}_{\pmb{y}_0}(t)-a^\intercal \pmb{y}_{\pmb{y}_0}(t-s)}{s}\right)\\
		&\geq \left(\lim_{s\to 0^+} \frac{a^\intercal \pmb{y}_{\pmb{y}_0}(t)-a^\intercal \pmb{y}_{\pmb{y}_0}(t)}{s}\right)=0,
	\end{align*}
	implying
	\begin{align*}
		a^\intercal \hat{\pmb{y}_{\pmb{y}_0}}^{t+\eta}&=a^\intercal \pmb{y}_{\pmb{y}_0}(t)+a^\intercal\left(\eta \frac{d}{dt}\pmb{y}_{\pmb{y}_0}(t)\right)\\	
        &\geq a^\intercal \pmb{y}_{\pmb{y}_0}(t).
	\end{align*}
	
	For contradiction, suppose $H(\hat{\pmb{y}_{\pmb{y}_0}}^{t+\eta})< c$. By continuity of $H$, 
for sufficiently small $\epsilon>0$, $\hat{\pmb{y}_{\pmb{y}_0}}^{t+\eta}+\epsilon a \in \{\pmb{y}_{\pmb{y}_0}: H(\pmb{y}_{\pmb{y}_0})\leq c\}$.  
	However, 
	\begin{align}
		a^\intercal (\hat{\pmb{y}_{\pmb{y}_0}}^{t+\eta}+\epsilon a) \geq a^\intercal \pmb{y}_{\pmb{y}_0}(t) +\epsilon||a||_2^2 > a^\intercal \pmb{y}_{\pmb{y}_0}(t)
	\end{align}
	contradicting that $\{\pmb{y}_{\pmb{y}_0}: a^\intercal \pmb{y}_{\pmb{y}_0} = a^\intercal \pmb{y}_{\pmb{y}_0}(t)\}$ is a supporting hyperplane. 
	Thus, the statement of the theorem holds.
\end{proof}

\begin{lemma} \label{lemma:h-convexity}
The invariant of Theorem \ref{theorem:invariant_multi} is jointly convex in $\pmb{\theta}$, $\pmb{\phi}$, $\lambda$ and $\mu$ when $f_i$ and $g_j$ are sigmoid functions of one variable. 
\end{lemma}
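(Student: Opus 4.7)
The plan is to exploit that, in the one–variable sigmoid case, $H$ is \emph{separable}: each summand depends on exactly one scalar coordinate. Under the hypotheses each $\pmb{\theta}_i$ and $\pmb{\phi}_j$ is a single real number, and the four groups of terms in $H$ involve respectively only $\theta_i$, only $\phi_j$, only $\lambda$, and only $\mu$. So the Hessian of $H$ is diagonal, and joint convexity reduces to the convexity of each univariate summand.

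The Lagrange multiplier pieces evaluate to $\tfrac{1}{2}(\lambda-\lambda^*)^2$ and $\tfrac{1}{2}(\mu-\mu^*)^2$, which are trivially convex. For the remaining two sums it is enough to handle a generic generator term, since the discriminator terms are treated identically. Because $f_i=\sigma$ is a one-to-one $C^1$ function, Theorem~\ref{theorem:sigmoid-poincare-recurrence-behavior} (or a direct unpacking of the construction in the proof of Theorem~\ref{theorem:reparametrization}) gives $X_{\pmb{\theta}_i(0)}=\sigma^{-1}$ independently of the initial condition. Using $\sigma'(x)=\sigma(x)(1-\sigma(x))$ we obtain
\begin{equation*}
\norm{\nabla f_i(X_{\pmb{\theta}_i(0)}(z))}^2 \;=\; \bigl(\sigma'(\sigma^{-1}(z))\bigr)^2 \;=\; z^2(1-z)^2 .
\end{equation*}

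Define $\Phi_i(\theta)=\int_{p_i}^{\sigma(\theta)}\frac{z-p_i}{z^2(1-z)^2}\,\mathrm{d}z$. The crucial computation is to differentiate twice. By the chain rule,
\begin{equation*}
\Phi_i'(\theta) \;=\; \frac{\sigma(\theta)-p_i}{\sigma(\theta)^2(1-\sigma(\theta))^2}\,\sigma'(\theta) \;=\; \frac{\sigma(\theta)-p_i}{\sigma(\theta)\,(1-\sigma(\theta))} ,
\end{equation*}
where one factor of $\sigma(\theta)(1-\sigma(\theta))$ cancels. Writing $f=\sigma(\theta)$ and differentiating once more, after collecting terms over the common denominator $f^2(1-f)^2$ the numerator simplifies to $(f-p_i)^2+p_i(1-p_i)$, so that
\begin{equation*}
\Phi_i''(\theta) \;=\; \frac{(f-p_i)^2 + p_i(1-p_i)}{f\,(1-f)} \;>\; 0,
\end{equation*}
since $f\in(0,1)$ and $p_i\in(0,1)$. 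Thus $\Phi_i$ is strictly convex in $\theta_i$. An identical computation shows strict convexity of each discriminator summand in $\phi_j$.

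The main obstacle is really the clean algebraic simplification that produces the non-negative numerator $(f-p_i)^2+p_i(1-p_i)$; without this cancellation the sign of $\Phi_i''$ would be unclear. Once this is in hand, $H$ is a sum of convex functions of disjoint scalar arguments, and therefore jointly convex in $(\pmb{\theta},\pmb{\phi},\lambda,\mu)$, completing the proof.
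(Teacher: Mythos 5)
Your proposal is correct and follows essentially the same route as the paper: exploit separability of $H$ across disjoint scalar variables, use $X_{\pmb{\theta}_i(0)}=\sigma^{-1}$ (independent of initialization) so the first derivative collapses to $\frac{f-p_i}{f(1-f)}$, and then show the second derivative $\frac{f^2-2p_if+p_i}{f(1-f)}$ is positive for $p_i\in(0,1)$. The only cosmetic difference is that you certify positivity of the numerator by completing the square, $(f-p_i)^2+p_i(1-p_i)$, whereas the paper checks that the quadratic's roots $p_i\pm\sqrt{p_i^2-p_i}$ are non-real; these are equivalent arguments.
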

\begin{proof}
Since $H$ is a sum of terms each involving disjoint variables, it suffices to prove that each term is convex with respect to its own variables. This follows immediately for $\lambda$ and $\mu$. Let us take one term involving $f_i$ (the same analysis works for $g_j$ terms as well). In fact we want to prove that the following function is convex
\begin{equation*}
    \int_{p_i}^{f (\theta_i)} \frac{z- p_i}{\norm{ \nabla f (X_{\theta_i (0) } (z) ) }^2}\mathrm{d}z
\end{equation*}
where $f$ is the sigmoid function. Taking the first derivative, knowing that $f' = (1-f) f$ for sigmoid we have
\begin{equation*}
    \frac{\left( f (\theta_i)- p_i \right) \left( 1- f (\theta_i) \right) f(\theta_i)}{\norm{ \nabla f (X_{\theta_i (0) } (f(\theta_i)) ) }^2}
\end{equation*}
$X_{\theta_i (0) } (f(\theta_i))$ is equal to $\theta_i$ since $f$ is one-to-one. Thus we can simplify
\begin{equation*}
    \frac{\left( f (\theta_i)- p_i \right) \left( 1- f (\theta_i) \right) f(\theta_i)}{\norm{ \nabla f (\theta_i) }^2}
\end{equation*}
Once again we can use the formula for the derivative of $f$
\begin{equation*}
    \frac{\left( f (\theta_i)- p_i \right) \left( 1- f (\theta_i) \right) f(\theta_i)}{\left(\left( 1- f (\theta_i) \right) f(\theta_i)\right)^2} =
    \frac{\left( f (\theta_i)- p_i \right)}{\left( 1- f (\theta_i) \right) f(\theta_i)} 
\end{equation*}
In order to complete the convexity analysis we must take the second derivative test. 
\begin{equation*}
    \frac{d}{d \theta_i } \frac{\left( f (\theta_i)- p_i \right)}{\left( 1- f (\theta_i) \right) f(\theta_i)}  =
    \frac{f(\theta_i)^2 -2p_i f(\theta_i) + p_i}{\left( 1- f(\theta_i) \right)^2 f(\theta_i)^2}   \left( 1- f(\theta_i) \right) f(\theta_i) = 
    \frac{f(\theta_i)^2 -2p_i f(\theta_i) + p_i}{\left( 1- f(\theta_i) \right) f(\theta_i)}
\end{equation*}
The only roots of the numerator are
\begin{equation*}
    f(\theta_i) = p_i \pm \sqrt{p_i^2 - p_i}
\end{equation*}
Of course for $p_i \in (0,1)$ these roots are not real. So for all $\theta_i$, $f(\theta_i) \in (0,1)$ and the second derivative is positive. This concludes our convexity proof. 
\end{proof}

\begin{figure}[ht!]
        \centering
        \includegraphics[width=\textwidth]{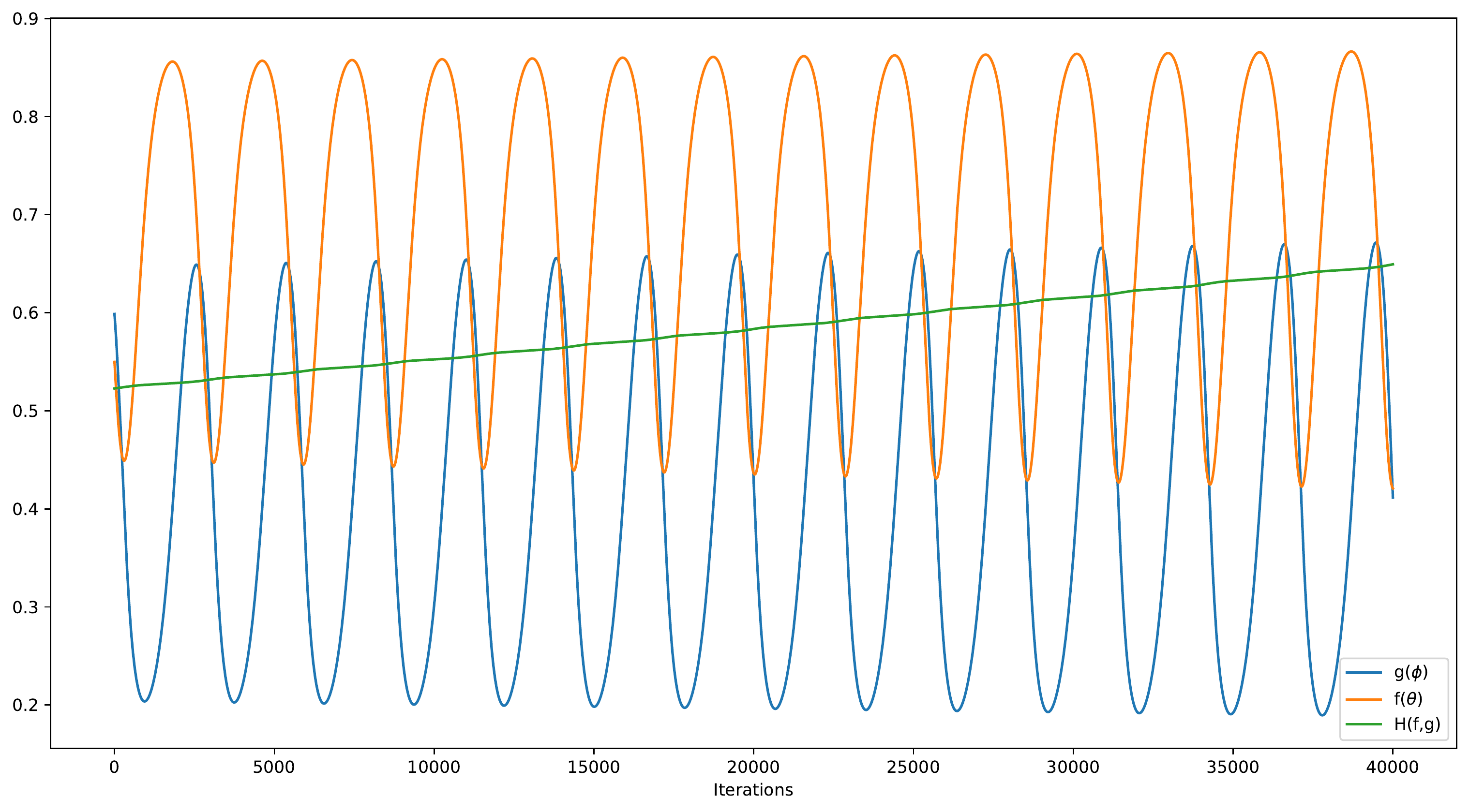}
        \caption{Hidden bilinear game with two strategies having $p=0.7$ and $q=0.4$. The functions $f$ and $g$ are sigmoids for each player. We observe the evolution of $f$ and $g$ as well as the invariant of Theorem \ref{theorem:invariant}. The trajectories are close to being periodic but $H$ has began to increase even with relatively few iterations, confirming the findings of Theorem \ref{theorem:discrete-hamiltonian}.}
        \label{fig:increaseing-hamiltonian}
\end{figure}

\begin{theorem} [Restated \theoref{theorem:discrete-hamiltonian}]
\label{restated-theorem:discrete-hamiltonian}
Let $f_i$ and $g_j$ be sigmoid functions. Then for the discretized version of the system of Equation \ref{eq:eq_gda_multi} and for safe intializations, function $H$ of Theorem \ref{theorem:invariant_multi} is non-decreasing.
\end{theorem}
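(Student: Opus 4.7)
The plan is to read off Theorem~\ref{theorem:discrete-hamiltonian} as an immediate corollary of the three building blocks already assembled earlier in this section. First, Theorem~\ref{restated-theorem:invariant_multi} establishes that $H(\pmb{F},\pmb{G},\lambda,\mu)$ is a time-invariant for the continuous-time gradient-descent-ascent flow of Equation~\ref{eq:eq_gda_multi} under any safe initialization, so $H$ is smooth and conserved along every continuous orbit. Second, Lemma~\ref{lemma:h-convexity} establishes that, precisely when each $f_i$ and $g_j$ is a sigmoid, $H$ is jointly convex in $(\pmb{\theta},\pmb{\phi},\lambda,\mu)$; because convex functions have convex sublevel sets, the hypotheses of Theorem~\ref{restated-theorem:increasingenergy} are therefore satisfied.

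Next I would observe that the discretized system in Equation~\ref{eq:eq_gda_multi} is literally Euler's method, with step-size $\alpha$, applied to the continuous-time vector field that admits $H$ as its invariant; that is, writing $\pmb{y}=(\pmb{\theta},\pmb{\phi},\lambda,\mu)$ and $\pmb{F}(\pmb{y})$ for the right-hand side of the continuous system, the discrete update is $\pmb{y}_{k+1}=\pmb{y}_k+\alpha\,\pmb{F}(\pmb{y}_k)$. Theorem~\ref{restated-theorem:increasingenergy} then applies verbatim: at each iterate $\pmb{y}_k$ one picks a supporting hyperplane $\{\pmb{y}:\pmb{a}^\top\pmb{y}=\pmb{a}^\top\pmb{y}_k\}$ to the convex sublevel set $\{H\leq H(\pmb{y}_k)\}$; invariance of $H$ along the continuous orbit through $\pmb{y}_k$ forces $\pmb{a}^\top\pmb{F}(\pmb{y}_k)\geq 0$, so the Euler displacement $\alpha\pmb{F}(\pmb{y}_k)$ moves weakly to the outer side of the hyperplane. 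A contradiction with the supporting-hyperplane property then rules out $H(\pmb{y}_{k+1})<H(\pmb{y}_k)$, yielding monotonicity.

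The only thing requiring bookkeeping is that the iterates remain in a region where $H$ is finite, smooth and convex, and where the invariance statement from Theorem~\ref{restated-theorem:invariant_multi} applies. This is where the sigmoid hypothesis buys us something essential: each sigmoid is one-to-one with strictly positive derivative everywhere on $\mathbb{R}$, so (as exploited in Theorem~\ref{theorem:sigmoid-poincare-recurrence-behavior}) the reparametrization $X_{\pmb{\theta}_i(0)}$ collapses to $f_i^{-1}$ globally, the sets $f_{i_{\pmb{\theta}_i(0)}}$ and $g_{j_{\pmb{\phi}_j(0)}}$ are simply $(0,1)$, and every initial condition is automatically safe. Consequently $H$ is a finite, smooth, globally convex function on $\mathbb{R}^{n+m+2}$, the Euler trajectory cannot leave its domain, and nothing obstructs the application of Theorem~\ref{restated-theorem:increasingenergy}.

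I do not expect any real obstacle: the genuine technical content is entirely in Lemma~\ref{lemma:h-convexity} (the rather delicate computation that shows the one-variable terms $\int_{p_i}^{f_i(\theta_i)}(z-p_i)/\|\nabla f_i\|^2\,\mathrm{d}z$ reduce, for sigmoids, to the explicitly convex expressions computed there) and in the already-stated Theorem~\ref{restated-theorem:increasingenergy}. The present theorem is simply their composition together with the observation that DGDA is Euler on CGDA. As an immediate corollary, $H(\pmb{y}_k)\geq H(\pmb{y}_0)>0$ whenever the initialization is not the equilibrium $(\pmb{p},\pmb{q})$, so the discrete dynamics cannot converge to $(\pmb{p},\pmb{q})$ unless initialized there.
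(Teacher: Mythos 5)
Your proposal is correct and follows essentially the same route as the paper's own proof: both reduce the theorem to Lemma \ref{lemma:h-convexity} (joint convexity of $H$ for sigmoids, hence convex sublevel sets) combined with Theorem \ref{restated-theorem:increasingenergy}, after observing that sigmoid invertibility makes $X_{\pmb{\theta}_i(0)} = f_i^{-1}$ so that $H$ is a single invariant common to all initializations and DGDA is Euler's method on the conservative continuous flow. Your extra bookkeeping (unpacking the supporting-hyperplane step and checking the iterates stay in the domain of $H$) merely makes explicit what the paper leaves implicit.
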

\begin{proof}
First observe that given that sigmoids are invertible functions so $X_{\theta_i(0)}(f_i)$ and $X_{\phi_j(0)}(g_j)$ are independent of the initial conditions similar to the proof of Theorem \ref{theorem:sigmoid-poincare-recurrence-behavior}. Thus invariant of Theorem \ref{theorem:invariant_multi} $H$ preserved by all the trajectories of the continuous time dynamical system is common across all initializations. Using Lemma \ref{lemma:h-convexity}, $H$ is convex and therefore has convex level sets. Of course it is also continuous. Using Theorem \ref{restated-theorem:increasingenergy} we get the requested result.
\end{proof}

\begin{theorem}[Restated \theoref{theorem:spurious-discrete}]
\label{restated-theorem:spurious-discrete}
One can choose a learning rate $\alpha$ and functions $f$ and $g$ for the discretized version of the system of Equation \ref{eq:eq_gda} so that for a positive measure set of initial conditions the trajectories converge to fixed points that do not correspond to equilibria of the hidden game.
\end{theorem}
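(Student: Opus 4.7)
The plan is to mimic the construction in the continuous-time proof of Theorem~\ref{restated-theorem:theorem:spurious} and then exploit the fact that, for a discrete dynamical system obtained by an Euler step of size $\alpha$, the Jacobian at a fixed point is $I + \alpha \mathrm{D}\pmb{Y}(\pmb{\theta}^*,\pmb{\phi}^*)$, whose eigenvalues are $1+\alpha\lambda_i$, where $\lambda_i$ are the eigenvalues of the continuous-time Jacobian. Hence stability in continuous time (real parts of $\lambda_i$ strictly negative) translates into linear stability in discrete time once $\alpha$ is small enough.

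First, I choose the same pair of functions $f,g$ as in the proof of Theorem~\ref{restated-theorem:theorem:spurious}: assume $v>0$ (the other case is symmetric) and pick $f,g$ so that there exist $\pmb{\theta}^*,\pmb{\phi}^*$ with $\nabla f(\pmb{\theta}^*)=\mathbf{0}$, $\nabla g(\pmb{\phi}^*)=\mathbf{0}$, $\nabla^2 f(\pmb{\theta}^*)\succ 0$, $\nabla^2 g(\pmb{\phi}^*)\succ 0$, $f(\pmb{\theta}^*)<p$ and $g(\pmb{\phi}^*)>q$. Because $\nabla f$ and $\nabla g$ vanish at $(\pmb{\theta}^*,\pmb{\phi}^*)$, this point is a fixed point of the discrete map $T(\pmb{\theta},\pmb{\phi})=(\pmb{\theta}-\alpha\nabla_{\pmb{\theta}}r,\pmb{\phi}+\alpha\nabla_{\pmb{\phi}}r)$ for every learning rate $\alpha$. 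Moreover, $f(\pmb{\theta}^*)\neq p$ and $g(\pmb{\phi}^*)\neq q$, so it is not a Nash equilibrium of the hidden bilinear game.

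Next, I compute $\mathrm{D}T(\pmb{\theta}^*,\pmb{\phi}^*)=I+\alpha\,\mathrm{D}\pmb{Y}(\pmb{\theta}^*,\pmb{\phi}^*)$. The continuous-time proof of Theorem~\ref{restated-theorem:theorem:spurious} shows that at $(\pmb{\theta}^*,\pmb{\phi}^*)$ the Jacobian $\mathrm{D}\pmb{Y}$ is block-diagonal with blocks $-v(g(\pmb{\phi}^*)-q)\nabla^2 f(\pmb{\theta}^*)$ and $v(f(\pmb{\theta}^*)-p)\nabla^2 g(\pmb{\phi}^*)$, both of which are negative-definite under our choice of $f,g$. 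Consequently, every eigenvalue $\lambda_i$ of $\mathrm{D}\pmb{Y}(\pmb{\theta}^*,\pmb{\phi}^*)$ is real and strictly negative. Pick any $\alpha$ satisfying
\[
0<\alpha<\frac{2}{\max_i |\lambda_i|}.
\]
Then every eigenvalue of $\mathrm{D}T(\pmb{\theta}^*,\pmb{\phi}^*)$ satisfies $|1+\alpha\lambda_i|<1$, so all generalized eigenspaces lie in $E^s$ and the splitting of Theorem~\ref{theorem:smt-discrete} gives $E^c=E^u=\{0\}$ and $E^s=\mathbb{R}^{n+m}$.

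Finally, Theorem~\ref{theorem:smt-discrete} then provides a full-dimensional $C^r$-embedded disc $W^{sc}_{\mathrm{loc}}$ and a ball $B$ around $(\pmb{\theta}^*,\pmb{\phi}^*)$ such that every initial condition whose forward orbit stays in $B$ belongs to $W^{sc}_{\mathrm{loc}}$; because $E^u=\{0\}$, standard sink arguments (or a direct contraction-mapping estimate using that the spectral radius of $\mathrm{D}T(\pmb{\theta}^*,\pmb{\phi}^*)$ is strictly less than one) imply that an entire open neighborhood of $(\pmb{\theta}^*,\pmb{\phi}^*)$ is forward-invariant and converges to $(\pmb{\theta}^*,\pmb{\phi}^*)$. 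This neighborhood has positive Lebesgue measure, and every trajectory in it converges to a fixed point that is not a Nash equilibrium of the hidden bilinear game. The only delicate point is verifying that such an $\alpha$ exists simultaneously for all eigenvalues; this is ensured by the finiteness of $\max_i |\lambda_i|$, which follows from the compactness argument at a single point. No additional technical obstacle arises; the proof is essentially a discrete-time translation of Theorem~\ref{restated-theorem:theorem:spurious}.
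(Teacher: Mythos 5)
Your proof is correct and follows essentially the same route as the paper: reuse the continuous-time construction of Theorem \ref{restated-theorem:theorem:spurious}, note the discrete Jacobian is $I + \alpha\,\mathrm{D}\pmb{Y}(\pmb{\theta}^*,\pmb{\phi}^*)$, choose $\alpha$ small enough that all its eigenvalues have modulus below one, and conclude a positive-measure converging set. The only minor difference is that the paper takes $\alpha < 1/\max_i|\lambda_i|$ so the discrete Jacobian has strictly positive eigenvalues, guaranteeing the local diffeomorphism hypothesis of Theorem \ref{theorem:smt-discrete}, whereas your wider bound $\alpha < 2/\max_i|\lambda_i|$ can render that Jacobian singular (when some $1+\alpha\lambda_i = 0$), so the appeal to Theorem \ref{theorem:smt-discrete} is not literally valid at those values of $\alpha$ --- but your fallback contraction/sink argument, which only needs spectral radius strictly less than one, covers this and indeed makes the stable-manifold machinery unnecessary.
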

\begin{proof}
The proof follows the same construction as in the continuous case of Theorem \ref{theorem:spurious}. In fact, the Jacobian of the discrete time map is
\begin{equation*}
    I_{(N+M)\times (N+M)} + \alpha \mathrm{D}\pmb{Y}(\pmb{\theta}, \pmb{\phi})
\end{equation*}
where $\pmb{Y}$ is the vector field of the continuous time system. We can do the same construction as in Theorem \ref{theorem:spurious}, to get a fixed point $(\pmb{\theta}^*, \pmb{\phi}^*)$ such that $\mathrm{D}\pmb{Y}(\pmb{\theta}, \pmb{\phi})$ has only negative eigenvalues and $(f(\pmb{\theta}^*), g(\pmb{\phi}^*)) \neq (p,q)$. Let $\lambda_{min}$ be the smallest eigenvalue of this matrix. Choose
\begin{equation*}
    \alpha < - \frac{1}{\lambda_{min}}
\end{equation*}
Then the Jacobian of the discrete time map has positive eigenvalues that are less than one. Therefore the discrete time map is locally a diffeomorphism and by the Stable Manifold Theorem for discrete time maps (Theorem \ref{theorem:smt-discrete}), the stable manifold is again full dimensional and therefore has positive measure.  
\end{proof}

\begin{figure}[!ht]
        \centering
        \includegraphics[width=\textwidth]{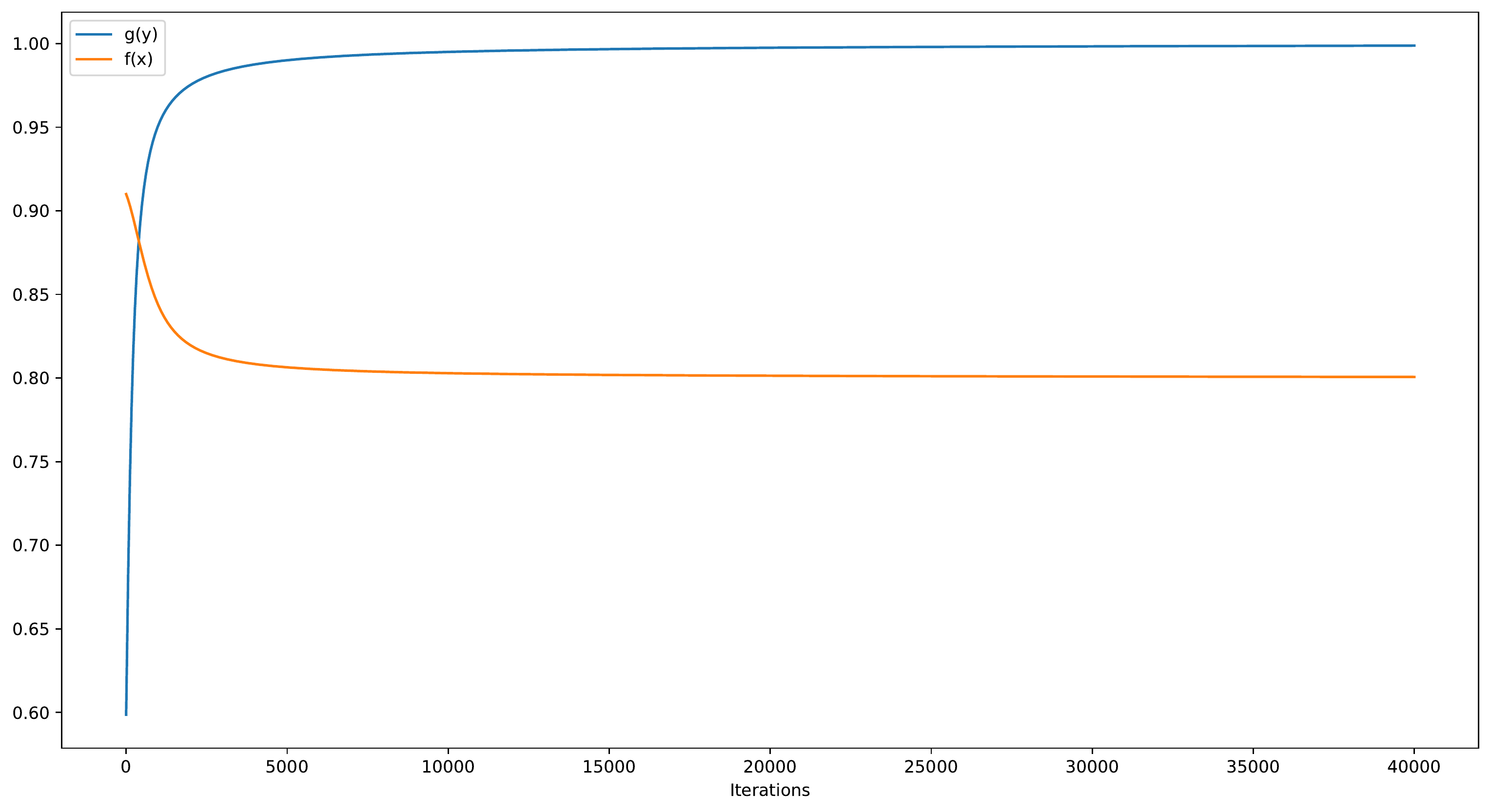}
        \caption{Hidden bilinear game with two strategies having $p=0.4$ and $q=0.2$. The functions are $f(x) = 0.8 + 0.2 \cdot \sigma(x)$ and $g(y) = \sigma(y)$. There is no solution of $f(x)=p$ and therefore no initialization is safe. The dynamical system converges to an equilibrium that is not game theoretically meaningful, verifying the findings of Theorem \ref{theorem:spurious-discrete}.}
        \label{fig:spurious}
\end{figure} \end{document}